\numberwithin{equation}{section}
\newcounter{hours}\newcounter{minutes}
\theoremstyle{plain}
\declaretheorem[title=Theorem, parent=section]{thm}
\declaretheorem[title=Lemma,sibling=thm]{lem}
\declaretheorem[title=Corollary,sibling=thm]{cor}
\declaretheorem[title=Proposition,sibling=thm]{prop}
\declaretheorem[title=Definition,sibling=thm]{DEF}
\declaretheorem[title=Remark,sibling=thm]{rem}
\declaretheorem[title=Assumption,sibling=thm]{assume}
\def\I{\mathcal I}
\def\M{{\mathcal M}}
\def\real{{\mathbb R}}
\def\integer{{\mathbb Z}}
\def\Indicator{{\mathbbm{1}}}
\def\ep{\varepsilon}
\def\al{\alpha}
\def\del{\delta}
\def\Om{\Omega}
\def\gam{\gamma} 
\def\lam{\lambda}
\def\Lam{\Lambda}
\def\sig{\sigma}
\def\grad{\nabla}
\def\Tr{\textnormal{Tr}}
\def\Id{\textnormal{Id}}
\def\union{\cup}
\def\intersect{\cap}
\DeclareMathOperator*{\osc}{osc}
\newcommand{\abs}[1]{\left| #1 \right|}
\newcommand{\norm}[1]{\lVert#1\rVert}
\newcommand{\Div}{\textnormal{div}}
\begin{document}

\title{Neumann Homogenization via Integro-Differential Operators, Part 2: Singular Gradient Dependence}

\begin{abstract}
We continue the program initiated in a previous work, applying integro-differential methods to Neumann Homogenization problems.  We target the case of linear periodic equations with a singular drift, which includes (with some regularity assumptions) divergence equations with \emph{non-co-normal} oscillatory Neumann conditions.  Our analysis focuses on an induced integro-differential homogenization problem on the boundary of the domain.  Also, we use homogenization results for regular Dirichlet problems to build barriers for the oscillatory Neumann problem with the singular gradient term.  We note that our method allows to recast some existing results for fully nonlinear Neumann homogenization into this same framework. 
\end{abstract}

\author{Nestor Guillen}
\author{Russell W. Schwab}

\address{Department of Mathematics\\
University of Massachusetts, Amherst\\
Amherst, MA  90095}
\email{nguillen@math.umass.edu}

\address{Department of Mathematics\\
Michigan State University\\
619 Red Cedar Road \\
East Lansing, MI 48824}
\email{rschwab@math.msu.edu}

\date{Monday 20th November 2017, revised version}

\thanks{
The work of N. Guillen was partially supported by NSF DMS-1201413. We would like to thank Zhongwei Shen for suggesting we try our method from \cite{GuSc-2014NeumannHomogPart1DCDS-A} for the oblique derivative, divergence equation.  We would like to thank both of the anonymous referees for helpful and thoughtful comments which hopefully led to an improved presentation of this result, especially the addition of Section \ref{sec:DivergenceFormEq} and references for boundary layers in Section \ref{sec:BackgrounAndAssumptions}.
}
%typos are dedicated to Mortiz Kassmann

\keywords{Dirichlet to Neumann, Homogenization, Integro-Differential Representation, Nonlocal Boundary Operators}
\subjclass[2010]{
%updated during October 2015--RS
35B27,  	%Homogenization; equations in media with periodic structure
%35J60,  	%Nonlinear elliptic equations
35J99, %pde other
35R09,  	%Integro-partial differential equations
45K05,  	%Integro-partial differential equations
47G20, %integro-differential operators
}

\maketitle
\baselineskip=14pt
\pagestyle{plain}              % page nos. at bottom, no headline
\pagestyle{headings}		% running headline and page nos. at top
%\markboth{}{Morrissey is a badass for life}
\markboth{Nestor Guillen, Russell Schwab}{Neumann Homogenization via Integro-Differential Operators, Part 2}
%%%%%%%%%%%%%%%%%%%%%%%%%%%%%%%%%%%%%%%%%%%%%%

%%%%%%%%%%%%%%%%%%%%%%%%%%%%%%%%%%%%%%%%%%%%%%
%%%%%%%%%%%%%%%%%%%%%%%%%%%%%%%%%%%%%%%%%%%%%%
%%%%%%%%%%%%%%%%%%%%%%%%%%%%%%%%%%%%%%%%%%%%%%
%%%%%%%%%%%%%%%%%%%%%%%%%%%%%%%%%%%%%%%%%%%%%%
%%%%%%%%%%%%%%%%%%%%%%%%%%%%%%%%%%%%%%%%%%%%%%
\section{Introduction}\label{sec:Intro}

\subsection{Statement of The Main Result}
In this paper, we study the periodic homogenization of linear elliptic equations with a singular drift with oscillatory Neumann conditions.  In particular, we study the $\ep\to0$ behavior of the solutions, $u^\ep$, to the equation
\begin{align}\label{eqIntro:MainEpScale}
	\begin{cases}
		\Tr(A(\frac{x}{\ep})D^2u^\ep(x)) + \frac{1}{\ep}B(\frac{x}{\ep})\cdot \grad u^\ep = 0\ &\text{in}\ \Sigma^1\\
		u^\ep = 0\ &\text{on}\ \Sigma_1\\
		\partial_n u^\ep(x)=g(\frac{x}{\ep})\ &\text{on}\ \Sigma_0.
	\end{cases}
\end{align} 
The equation is posed in an infinite strip domain, with a normal vector given by $n$, which is
\begin{align*}
	\Sigma^1 \subset \real^{d+1},\  \Sigma^1 = \{x\in\real^{d+1}\ :\ 0<x\cdot n<1 \}.
\end{align*}
The ``top'' and ``bottom'' boundaries are respectively
\begin{align*}
	&\Sigma_1 \subset \real^{d+1},\  \Sigma_1 = \{x\in\real^{d+1}\ :\ x\cdot n=1\},\\
	&\Sigma_0 \subset \real^{d+1},\  \Sigma_0 = \{x\in\real^{d+1}\ :\ x\cdot n=0\}.
\end{align*}
The main goal is to prove that the ``nonlinear averaging'' effects of the equation in the interior and on the boundary are ``compatible'', and that $u^\ep$ will converge uniformly to $\bar u$, that is the unique solution of
\begin{align}\label{eqIntro:MainEffective}
	\begin{cases}
		\Tr(\bar AD^2\bar u(x)) = 0\ &\text{in}\ \Sigma^1\\
		\bar u = 0\ &\text{on}\ \Sigma_1\\
		\partial_n \bar u(x)=\bar g\ &\text{on}\ \Sigma_0.
	\end{cases}
\end{align} 
The matrix $\bar A$ is a unique constant, and it is the same one obtained in the homogenization of (\ref{eqIntro:MainEpScale}) without oscillatory boundary data (see Section \ref{sec:Perturbed} and \ref{eqPerturb:ABarDef}).  Due to the flat geometry of $\Sigma^1$, $\bar g$ will also be a unique constant (but in more general situations would be a function of the normal vector, $n(x)$).  Thus, this is simply another way of saying that there is a unique affine function, $\bar u$, such that $u^\ep$ converges to $\bar u$ uniformly in $\overline{\Sigma^1}$.

We would like to alert the reader to a standard assumption involving homogenization of equations like (\ref{eqIntro:MainEpScale}) as well as to the choice to study the non-physical boundary condition for the normal derivative of $u^\ep$, instead of the physical boundary condition typically associated to the co-normal derivative of $u^\ep$.  In some situations, second order equations arise from conserved integral quantities, which take on a divergence structure (and hence a co-normal boundary condition is often considered the natural and physical one).  Equally naturally, in other situations, the second order equations arise from probabilistic considerations, via expected terminal values of diffusion processes and Ito's formula.  Our motivation for equation (\ref{eqIntro:MainEpScale}) is in the second category, that of studying diffusion on large time-space scales with a drift that has the same strength as the diffusion.  The boundary condition means that we are looking at a diffusion that reflects off of the boundary, $\Sigma_0$, and (\ref{eqIntro:MainEpScale}) records a running cost seen by the diffusion, measured in its local time, given by the data $g(x/\ep)$ (see e.g. \cite[Sec. 8]{SatoUeno-1965MultiDimDiffBoundryMarkov} and/or \cite[Chp. IV, Sec. 7]{IkedaWatanabe-1981SDE}).  This reflection condition of the underlying diffusion means that the probabilistic boundary condition is the normal derivative, and \emph{not} the co-normal derivative.  Thus our study arises from a different source than the more commonly studied divergence structure equations involving the co-normal derivative.  Of course, since there are multiple coupled oscillatory space scales in (\ref{eqIntro:MainEpScale}), one seeks the question of whether or not solutions will converge to those of a simpler equation.  In probabilistic terms, when the drift is as in (\ref{eqIntro:MainEpScale}), it is possible that trajectories will go ``ballistic'' and exit the domain quickly due to the drift.  In those terms, a standard assumption to prevent this behavior is that the drift should have a cancellation property with respect to the underlying invariant measure governing the diffusion, and that is the reason for the assumption appearing in Assumption \ref{assume:ABCompatible}.  As pointed out by one of the anonymous referees, we indicate that in fact, the special cancellation condition in Assumption \ref{assume:ABCompatible} ensures that (\ref{eqIntro:MainEpScale})-- with the help of the invariant measure-- can be written as a divergence form equation, with possibly a \emph{non-symmetric} diffusion matrix (which has been used in at least a few previous works, such as \cite{AvellanedaLin-1989CompactnessHomogNonDivEqCPAM}, \cite{BeLiPa-78}).  We will provide some brief explanation on this fact in Appendix \ref{sec:DivergenceFormEq}.
 Our main result is:
\begin{thm}\label{thm:Main}
	Assume that $g:\real^{d+1}\to\real$, $B:\real^{d+1}\to\real^{d+1}$, $A:\real^{d+1}\to\mathcal{S}(d+1)$ are all $\integer^{d+1}$ periodic functions; $A\in C^{\gam}(\real^{d+1})$ is uniformly elliptic, $B\in C^\gam(\real^{d+1})$, $g\in C^\gam(\real^{d+1})$ for some $\gam\in(0,1)$;  that $n$ is an irrational direction; and the standard compatibility condition between $A$ and $B$, that is that there is a unique invariant measure, $m$, corresponding to $\Tr(A(y)D^2u)+B(y)\cdot\grad u$ and 
\[
\int_{[0,1]^{d+1}}m(y)B_j(y)dy=0\ \ \text{for each}\ j;
\]
all of which appear in detail as Assumptions \ref{assume:A}-\ref{assume:ABCompatible}.   There exists a unique constant matrix, $\bar A$, and a unique constant, $\bar g$, such that $u^\ep\to \bar u$, uniformly in $\overline{\Sigma^1}$, where $\bar u$ is the unique affine function that solves (\ref{eqIntro:MainEffective}).
\end{thm}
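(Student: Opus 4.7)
The plan is to follow the strategy introduced in Part 1 by the same authors and reduce the Neumann homogenization problem to a homogenization problem for an induced integro-differential operator acting on the boundary $\Sigma_0$. The novelty here, namely the singular drift $\tfrac{1}{\ep}B(x/\ep)\cdot\grad$, is handled by combining the compatibility condition in Assumption \ref{assume:ABCompatible} with regularity results for the \emph{Dirichlet} homogenization of (\ref{eqIntro:MainEpScale}), which are used both to construct the relevant nonlocal operator and to build barriers.

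First, I would record the interior (Dirichlet) homogenization: for smooth boundary data, the operator $L^\ep := \Tr(A(x/\ep)D^2\,\cdot\,)+\tfrac{1}{\ep}B(x/\ep)\cdot\grad$ homogenizes to $\bar L = \Tr(\bar A D^2\,\cdot\,)$ under Assumption \ref{assume:ABCompatible}, which simultaneously defines $\bar A$ and forces the limit to have no first-order term. Second, for each sufficiently regular $\phi:\Sigma_0\to\real$, let $v^\ep_\phi$ solve $L^\ep v=0$ in $\Sigma^1$ with $v=\phi$ on $\Sigma_0$ and $v=0$ on $\Sigma_1$, and define the Dirichlet-to-Neumann map $I^\ep[\phi](x)=\partial_n v^\ep_\phi(x)$ on $\Sigma_0$. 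Then (\ref{eqIntro:MainEpScale}) is equivalent to the nonlocal boundary equation $I^\ep[\phi^\ep] = g(x/\ep)$ followed by Dirichlet extension into $\Sigma^1$; the map $I^\ep$ is linear, nonlocal, and admits a L\'evy-type representation suitable for the integro-differential framework of Part 1. Third, combining the Dirichlet homogenization with uniform boundary $C^{1,\gam}$ estimates for $v^\ep_\phi$ (of Avellaneda--Lin type) one shows $I^\ep[\phi]\to\bar I[\phi]$ for smooth $\phi$, where $\bar I$ is the Dirichlet-to-Neumann map for $\bar L$ on $\Sigma^1$; since $\bar A$ is constant and $n$ is fixed, $\bar I$ is translation invariant along $\Sigma_0$.

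Fourth, I would solve the boundary cell problem: since $n$ is irrational and $g$ is $\integer^{d+1}$-periodic, its restriction to $\Sigma_0$ is quasi-periodic and ergodic, and one seeks a bounded corrector $w$ on $\Sigma_0$ (in the fast variable) satisfying $\bar I[w](y) = g(y)-\bar g$. The constant $\bar g$ is forced upon us as the unique value making this solvable, i.e.\ the ergodic average of $g$ against the invariant measure determined by $\bar I$. Fifth, with $w$ in hand I would build barriers of the form $\bar u(x) \pm \ep\, \chi(x/\ep) \pm o(1)$, where $\chi$ combines the interior corrector for $L^\ep$ with the boundary corrector $w$ suitably cut off away from $\Sigma_0$; the maximum principle for $L^\ep$ then sandwiches $u^\ep$ around $\bar u$ and gives uniform convergence on $\overline{\Sigma^1}$.

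The main obstacle, and what distinguishes this paper from Part 1, is precisely the singular first-order term $\frac{1}{\ep}B(x/\ep)\cdot\grad$. It prevents a direct application of the comparison/regularity machinery used in Part 1 (which assumed bounded drift), and in particular one cannot naively pass the nonlocal operator $I^\ep$ to the limit. The resolution relies on the compatibility assumption, which allows the singular drift to be absorbed by divergence-structure correctors, together with uniform up-to-the-boundary regularity estimates for solutions of $L^\ep v=0$; these provide the equicontinuity of $\partial_n v^\ep_\phi$ on $\Sigma_0$ needed to legitimize the passage $I^\ep \to \bar I$, and also the compactness required for the barriers. Controlling the boundary-layer behavior of the Dirichlet correctors in the presence of the $\frac{1}{\ep}$ drift, and verifying that the resulting $I^\ep$ genuinely fits into an integro-differential homogenization framework compatible with Part 1, is the most delicate step.
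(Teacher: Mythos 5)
Your overall framing---reduce to a Dirichlet-to-Neumann equation on $\Sigma_0$, use the interior Dirichlet homogenization to define $\bar A$ and to build barriers, and identify the singular drift as the main new difficulty---matches the paper. But the core of your argument (steps three through five) has a genuine gap. You propose to pass the operator to the limit, $I^\ep[\phi]\to\bar I[\phi]$ for fixed smooth $\phi$, and then solve a cell problem $\bar I[w]=g-\bar g$ for the \emph{homogenized} Dirichlet-to-Neumann map. This cannot determine $\bar g$: the boundary data $g(x/\ep)$ and the coefficients $A(x/\ep)$, $B(x/\ep)$ oscillate at the same scale, so the effective Neumann constant is produced by their unit-scale interaction in a boundary layer near $\Sigma_0$, which is exactly the information destroyed by replacing $I^\ep$ with the translation-invariant operator $\bar I$ of the constant-coefficient limit. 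The correct object is the \emph{microscopic} Dirichlet-to-Neumann map $\I^{1/\ep}$ for the unrescaled operator $\Tr(A(y)D^2\cdot)+B(y)\cdot\grad$ on the expanding slab $\Sigma^{1/\ep}$, and the equation $\I^{1/\ep}(w^\ep,y)=g(y)$ must be homogenized directly as an \emph{almost periodic} (because $n$ is irrational) integro-differential problem on $\Sigma_0$. Moreover, your cell problem asks for a bounded exact corrector $w$ on an irrational hyperplane; in the almost periodic setting such correctors generally do not exist, which is precisely why the paper replaces the corrector by an Ishii-type decay statement, $\norm{\ep w^\ep-\ep w^\ep(0)}_{L^\infty(\Sigma_0)}\to0$, proved from the almost periodicity of $w^\ep$ (inherited from the lattice structure via translations by $\hat\tau\in\integer^{d+1}$ close to $\Sigma_0$) together with uniform $C^\gam$ bounds, and then proves uniqueness of the limit constant by a comparison argument.

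Two further ingredients are missing and are not cosmetic. First, the a priori bound $\norm{u^\ep}_{L^\infty}\leq C$ is nontrivial here: it fails without the centering condition on $B$, and the paper obtains it from a barrier $\phi^\ep$ whose normal derivative satisfies $-c_2\leq\partial_n\phi^\ep\leq-c_1<0$ uniformly in $\ep$; that barrier is built from the \emph{global rate} of convergence of the regular Dirichlet homogenization (Bensoussan--Lions--Papanicolaou), which guarantees $\phi^\ep$ stays within an $O(1)$ distance of a hyperplane so that one-dimensional sub/supersolutions can be fitted. Your proposal uses barriers only at the final sandwiching step and never establishes this bound, without which none of the regularity estimates you invoke are applicable. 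Second, the comparison machinery on the boundary requires the quantitative structure lemma $\I^{1/\ep}(\phi+c,y)=\I^{1/\ep}(\phi,y)-cf^\ep(y)$ with $\ep c_1\leq f^\ep\leq\ep c_2$ (a consequence of the same barrier), which replaces the trivial invariance under adding affine functions available when $B\equiv0$; this is the step that actually absorbs the singular drift, rather than a divergence-structure corrector as you suggest.
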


\begin{rem}
When $A\in C^{1,\gam}$ and $B=\Div(A)$, Theorem \ref{thm:Main} includes the case of solving $\Div(A(x/\ep)\grad u^\ep)=0$ with $\partial_n u^\ep (x)=g(x/\ep)$. The Neumann condition is oblique with respect to the operator $\Div{(A(y)\grad u)}$ as opposed to the more frequently studied \emph{co-normal} condition, $(n, A(x/\ep)\grad u)=g(x/\ep)$.  This means that variational methods are not appropriate for (\ref{eqIntro:MainEpScale}), and so we appeal to those based on comparison principles and other non-divergence type techniques.
\end{rem}

\begin{rem}
	In Section \ref{sec:ChoiKim} we mention how a modification of Theorem \ref{thm:Main} and its proof give a different proof of a similar result for nonlinear equations without gradient dependence that was proved in Choi-Kim \cite{ChoiKim-2014HomogNeumannJMPA}.
\end{rem}

The simple geometry of $\Sigma^1$, although particular, is now known to be the most important one for resolving solutions of (\ref{eqIntro:MainEpScale}) in more general  domains.  The issues of resolving the homogenization of (\ref{eqIntro:MainEpScale}) for all irrational directions, and proving homogenization in general domains are separate ones.  In $\Sigma^1$, (\ref{eqIntro:MainEpScale}) can be thought of as a fundamental corrector problem, whereas studying (\ref{eqIntro:MainEpScale}) in general domains relies upon how the effective normal condition depends upon the normal direction, $n$, and is of a different nature than the corrector problem.

The \emph{interior} homogenization of (\ref{eqIntro:MainEpScale}) with regular boundary conditions that don't depend on $\ep$ is well understood, and $\bar A$ is determined by the same analysis that does not see the oscillatory boundary condition (see Section \ref{sec:Perturbed}).  The new part of Theorem \ref{thm:Main} is of course the ability to also have the  oscillatory condition, $\partial_n u^\ep = g(x/\ep)$.

Nearly all of our analysis that supports Theorem \ref{thm:Main} is contained in an auxiliary result about the solutions of an induced integro-differential homogenization problem-- via the Dirichlet to Neumann mapping for (\ref{eqIntro:MainEpScale}).  Here we set up and state this result, and it is expanded upon in the remainder of the work.

The Dirichlet-to-Neumann mapping (we will call it the ``D-to-N'') for (\ref{eqIntro:MainEffective}) is the mapping that uses the unique (classical) solution, $W^\ep_u$, of the equation
\begin{align*}
	\begin{cases}
		\Tr(A(\frac{x}{\ep})D^2W^\ep_u) + \frac{1}{\ep}B(\frac{x}{\ep})\cdot\grad W^\ep_u = 0\ \ &\text{in}\ \Sigma^1\\
		W^\ep_u = 0\ \ &\text{on}\ \Sigma_1\\
		W^\ep_u = u\ \ &\text{on}\ \Sigma_0,
	\end{cases}
\end{align*}
and is defined as
\begin{align*}
	\I^1: C^{1,\gam}(\Sigma_0)\to C^{\tilde\gam}(\Sigma_0)
\end{align*}
via
\begin{align*}
	\I^1(u,x) = \partial_n W^\ep_u(x),
\end{align*}
where $u$ is given, and $W^\ep_u$ is the unique solution of (\ref{eqIntro:MainEpScale}).
We want to stress that in fact $\I^1$ is an operator that has oscillatory dependence like $x/\ep$, just as the original equation.
Due to the fact that $\I^1$ is an operator with the global comparison principle, the results of Courr\`ege \cite{Courrege-1965formePrincipeMaximum} show that $\I^1$ is an integro-differential operator on functions on $\Sigma_0$, and it must have the form
\begin{align}\label{eqIntro:Courrege}
	\I^1(u,x) &= c^\ep(x)u(x) + b^\ep(x)\cdot\grad u(x)\nonumber  \\
	&\ \ \ \ + \int_{\Sigma_0}\left(u(x+h)-u(x)-\Indicator_{B_1}(h)\grad u(x)\cdot h\right)\mu^\ep(x,dh),
\end{align}
for some $c^\ep$ and $b^\ep$, and $\mu^\ep$ are appropriate L\'evy measures.  The actual quantitative properties of $c^\ep$, $b^\ep$, and $\mu^\ep$, such as regularity and L\'evy bounds, are somewhat tricky to ascertain, and so we provide analysis that does not use their exact structure.

	A recasting of equation (\ref{eqIntro:MainEpScale}) into the setting of $\I^1$ shows that $u^\ep|_{\Sigma_0}$ is the unique solution of the equation,
    \begin{align}\label{eqIntro:NonlocalHomog}
    \I^1(u^\ep|_{\Sigma_0},x)=g(\frac{x}{\ep}).
    \end{align}
	The integro-differential formula, combined with the fact that we have periodic ingredients restricted to the irrational hyperplane, $\Sigma_0$, puts (\ref{eqIntro:NonlocalHomog}) into the realm of almost periodic integro-differential homogenization of a uniformly elliptic operator on $\Sigma_0$, which we can view in a similar light to Ishii's work in \cite{Ishii-2000AlmostPeriodicHJHomog}.  Thus, our work is entirely focused on the new integro-diffrerential equation, (\ref{eqIntro:NonlocalHomog}), which is indeed an \emph{almost periodic homogenization problem}.  The main step in obtaining Theorem \ref{thm:Main} is the following:
\begin{thm}\label{thm:LimitOfBoundaryValues}
	There exists a unique constant, $\bar c$, such that $u^\ep|_{\Sigma_0}\to \bar c$ uniformly in $\Sigma_0$. 
\end{thm}

\subsection{Heuristics}

Our treatment of Theorem \ref{thm:Main} follows that of our earlier result in \cite{GuSc-2014NeumannHomogPart1DCDS-A}, and as just mentioned above, the main idea is to recast (\ref{eqIntro:MainEpScale}) as a global, interior, almost periodic integro-differential homogenization problem that takes place on $\Sigma_0$ only, given by (\ref{eqIntro:NonlocalHomog}).  The reader can skip to Section \ref{sec:MainIdeas} to see the derivation of the integro-differential homogenization and how Theorem \ref{thm:LimitOfBoundaryValues} will be proved.  

The hope is that $\I^1$ turns out to be a uniformly elliptic operator, and (\ref{eqIntro:NonlocalHomog}) becomes its own homogenization problem; hence it was natural to model our analysis on the almost periodic homogenization of Ishii for Hamilton-Jacobi equations in \cite{Ishii-2000AlmostPeriodicHJHomog}.  Furthermore, since the Courr\`ege result ensures that $\I^1$ is has an integro-differential representation like (\ref{eqIntro:Courrege}), we drew inspiration from one of the author's previous works on integro-differential homogenization in \cite{Schw-10Per}.   To make this program happen, we need to establish the following main steps: 
\begin{itemize}
	\item Show that $\I^1$ is uniformly elliptic in an appropriate sense (basically that adding appropriate bump functions can strictly increase/decrease the values of the operator), that $\I^1$ has a comparison principle between sub and super solutions, and detail how the size of the right hand side of equations involving $\I^1$ effects the size of solutions.  (This is contained in Section \ref{sec:StructureOfIep}.)
	\item Determine the relationship between $\I^1(u+c)$ and $\I^1(u)$ when $c$ is a constant. (Also in Section \ref{sec:StructureOfIep}.)
	\item Most importantly, establish that $\I^1$ is an almost periodic operator, in an appropriate sense. (This is in Section \ref{sec:AlmostPeriodicI}.)
	\item Prove that $u^\ep|_{\Sigma_0}$ has subsequential limits that can only be constants, and prove that any such constant is unique.  (This is the direct analog with Ishii \cite{Ishii-2000AlmostPeriodicHJHomog}-- in particular showing approximate correctors-- and is in some sense a direct consequence of the combination of almost periodicity and uniform ellipticity.  It appears in Sections \ref{sec:NonlocalIshii}, \ref{sec:UniqueLimitConstant}.)
\end{itemize}

As the inquisitive reader will see, in \cite{GuSc-2014NeumannHomogPart1DCDS-A} the above points were already established for purely second order equations that are translation invariant in $\Sigma_0$ in (\ref{eqIntro:MainEpScale}), with only oscillatory boundary terms.  The difficulty in this current work was finding a way to show that the presence of the term $(1/\ep)B(\frac{x}{\ep})\cdot\grad u^\ep$ in (\ref{eqIntro:MainEpScale}) did not prevent the validity of any of these steps.  In fact, the influence of this term caused significant technical difficulties in nearly all of the above steps, and it is the main reason why the analysis here is more involved than our earlier one on simpler equations in \cite{GuSc-2014NeumannHomogPart1DCDS-A}.  It turns out that we learned (after the fact) that this difficulty is directly related to the issue of finding global solutions, and/or sub/super solutions on half spaces, to equations that are an unscaled version of (\ref{eqIntro:MainEpScale}) and have appropriate uniform growth/decay at infinity (there is a small discussion on how this effects boundary homogenization by Barles-Mironescu in \cite{BarlesMironescu-2013HomogOscDirichletAsympAnal}).  This was a rather subtle issue, and the only reason our program works is because the equation in the interior in (\ref{eqIntro:MainEpScale}) indeed enjoys its own homogenization result.  This allowed us to leverage the nice behavior of solutions (hyperplanes) to the effective equation (\ref{eqIntro:MainEffective}) to show that the behavior of the $\ep$-level operators, $\I^1$, were still well behaved and the outlined program could continue.  These technical difficulties appear more or less in Section \ref{sec:HabemusBarrieram} and their results are propagated through Sections \ref{sec:StructureOfIep} and \ref{sec:HomogMainArgs}.  We want to stress that in the case that the operator in the interior of (\ref{eqIntro:MainEpScale}) is purely second order (but also allowing for $x/\ep$ dependence), these technical difficulties disappear completely due to the fact that hyperplanes are solutions to the interior equation, for all $\ep>0$.

\subsection{Comments on the title and our method of analysis}

We would like to point out that using the integro-differential framework to guide our analysis is, in principle, purely a choice of style and familiarity.  As mentioned in the heuristics, we were driven by the choice to pursue as direct an analog as possible to the Ishii result on almost periodic homogenization.  This is because we felt the nonlocal framework, by allowing to study a global elliptic problem without a boundary term, served as a nice way to package the analysis to keep track of how closely it parallels the homogenization theory for other equations with the comparison principle, e.g. uniformly elliptic equations, Hamilton-Jacobi equations, and integro-differential equations.  

That being said, we stress that it was simply a style choice.  For the curious reader, one can go back to the simpler setting when (\ref{eqIntro:MainEpScale}) is purely second order and is translation invariant, containing only the oscillatory term $\partial_n u^\ep=g(x/\ep)$.  In this simpler case, one could argue that there is a close link between the main steps in our previous intergro-differential styled approach in \cite{GuSc-2014NeumannHomogPart1DCDS-A} and the earlier approach established by Choi-Kim-Lee in \cite{ChKiLe-2012HomogNeumann} (which also has nearly the same steps by Choi-Kim \cite{ChoiKim-2014HomogNeumannJMPA}).  Thus, it is reasonable to believe that modifications of the methods in Choi-Kim \cite{ChoiKim-2014HomogNeumannJMPA} could yield the same results as this paper.  Since seemingly nothing had been proved about the homogenization of (\ref{eqIntro:MainEpScale}) in the \emph{irrational} case in a way that did not substantially link the choice of normal vector, $n$, with the drift, $B$, we chose to continue the pursuit along the lines of our previous work.  Given the analogy of our method and those of \cite{ChoiKim-2014HomogNeumannJMPA}, it is likely that the same difficulties would be encountered and similar tricks would be needed to overcome them.  It is probably unlikely that these modifications would be any easier in the case of the approach in \cite{ChoiKim-2014HomogNeumannJMPA}.

%%%%%%%%%%%%%%%%%%%%%%%%%%%%%%%%%%%%%%%%%%%%%%
%%%%%%%%%%%%%%%%%%%%%%%%%%%%%%%%%%%%%%%%%%%%%%
%%%%%%%%%%%%%%%%%%%%%%%%%%%%%%%%%%%%%%%%%%%%%%
%%%%%%%%%%%%%%%%%%%%%%%%%%%%%%%%%%%%%%%%%%%%%%
%%%%%%%%%%%%%%%%%%%%%%%%%%%%%%%%%%%%%%%%%%%%%%
\section{Background and Assumptions}\label{sec:BackgrounAndAssumptions}

\subsection{Background}

We note that despite the many existing results about boundary homogenization (some listed below) to the best of our knowledge, there are none that can treat (\ref{eqIntro:MainEpScale}) when $n$ is irrational and $B$ does not have a restriction based on $n$.   The field of homogenization of elliptic, parabolic, and Hamilton-Jacobi equations in the periodic, almost periodic, and random settings is by now an enormous field with contributions from many authors.  The background for general homogenization is not presented here, and we simply try to give the references for oscillatory \emph{boundary} problems.  The interested reader can consult the books of Bensoussan-Lions-Papanicolaou \cite{BeLiPa-78} and Jikov-Kozlov-Oleinik \cite{Jiko-94} for an introduction to the subject and the survey of Engquist-Souganidis \cite{EnSo-08} for a somewhat current list of results.  Since \cite{EnSo-08}, there has been a large amount of activity, especially in the case of random coefficients and rates of convergence, which is not represented in \cite{EnSo-08}.  

\textbf{Two origins of (\ref{eqIntro:MainEpScale}).}  An equation with oscillatory Neumann data, as in (\ref{eqIntro:MainEpScale}), naturally arises in the study of a diffusion in an oscillatory environment with a prescribed reflection at the boundary which accrues a running cost measured in its local time, governed by an amount dictated by $g(x/\ep)$ (see e.g. \cite[Sec. 8]{SatoUeno-1965MultiDimDiffBoundryMarkov} and/or \cite[Chp. IV, Sec. 7]{IkedaWatanabe-1981SDE}).  Studying this equation with a regular boundary condition (no $x/\ep$ dependence) in the periodic setting goes back at least to Freidlin \cite{Freidlin-1964DirichletProblemPeriodicCoefficient} and Bensoussan-Lions-Papanicolaou \cite{BeLiPa-78}.  Probabilistically, the oscillatory running cost of the reflection is a natural consideration, for example in Tanaka \cite{Tanaka-1984HomogDiffBoundary}.  Another place where the condition $\partial_n u^\ep=g(x/\ep)$ arises is in the study of boundary layers for (\ref{eqIntro:MainEpScale}) with \emph{regular} boundary data.  In this case, when one uses a corrector to expand the $\ep$ behavior of $u^\ep$ around the smooth, effective solution, $\bar u$, they must investigate $u^\ep - \bar u - \ep u^1(x/\ep)-\ep^2 u^2(x/\ep)$, where vaguely $u^1$ and $u^2$ are first and second correctors.  However, this function no longer satisfies the original boundary conditions of $u^\ep$ and $\bar u$, and in general it will be a function of $x/\ep$ on $\Sigma_0$.  Hence, one is forced to consider oscillatory terms on $\Sigma_0$, as in (\ref{eqIntro:MainEpScale}), where the term of dominating order would be $\partial_n v^\ep(x)=\partial_nu^1(x/\ep)$, with $v^\ep$ a special choice for expanding the boundary layer.  For the Dirichlet problem, the appearance of the oscillatory boundary term due to the use of a corrector in the boundary layer expansion was already present in \cite[Chp. 3, Sec. 5]{BeLiPa-78} and Avellaneda-Lin \cite[Sec. 3.2]{AvellanedaLin-1987CompactnessHomogDivEqCPAM}, although in \cite{BeLiPa-78} the precise boundary behavior was not under consideration, and the oscillatory terms were not further studied because they were of lower order.

\textbf{The rational case.} The earliest results for linear non-divergence equations with oscillatory oblique Neumann data appear to go back to Tanaka \cite{Tanaka-1984HomogDiffBoundary}, where the boundary contains a subspace of the $\integer^{d+1}$ lattice, and the Neumann data is periodic with respect to this subspace.  Next came the works of Arisawa \cite{Arisawa-2003HomogNeumannAIHP} and subsequently Barles-Da Lio-Lions-Souganidis \cite{BaDaLiSo-2008ErgodicProbHomogNeumannIUMJ}.  These works treat the oscillatory Neumann (and more general) boundary conditions and fully nonlinear equations in a situation where effectively the hyperplane, $\Sigma_0$, would share a periodic sub-lattice with $\integer^{d+1}$.  Basically, the corresponding results in those papers would match up with a choice of $n$ as a rational vector in (\ref{eqIntro:MainEpScale}).   Both \cite{BaDaLiSo-2008ErgodicProbHomogNeumannIUMJ} and \cite{Tanaka-1984HomogDiffBoundary} include the singular gradient term that appears in (\ref{eqIntro:MainEpScale}).  This common periodicity between the boundary condition and the interior equation seems to be crucial in those works, as they solve coupled corrector (or ``ergodic'') problems in which the corrector must solve both the interior and boundary equations simultaneously.  In this same shared interior/boundary periodicity set-up, with both cases of rational and irrational choices of $n$, some problems of Dirichlet homogenization were studied in Barles-Mironescu \cite{BarlesMironescu-2013HomogOscDirichletAsympAnal}.  Our method, as outlined in Section \ref{sec:MainIdeas} effectively de-couples these two equations and searches for an effective limit on the boundary of $\Sigma^1$ by itself, which subsequently determines $\bar g$ a posteriori.  This vaguely means the we in some sense only search for approximate correctors to \emph{one} equation, the boundary integro-differential equation, instead of both the interior and boundary simultaneously.  We note that one could view the approach of Choi-Kim in \cite{ChoiKim-2014HomogNeumannJMPA} to also have the same de-coupling process for the cell problem as the limit is focused solely on the boundary behavior (but the interior problem comes back into the analysis subsequently when the continuity of the effective boundary condition is studied).

\textbf{The co-normal case.}  
In the case of divergence equations, with an oscillatory \emph{co-normal} condition, $(n,A(x/\ep)\grad u^\ep)=g(x/\ep)$, (\ref{eqIntro:MainEpScale}) was treated in the classical work of Bensoussan-Lions-Papanicolaou \cite[Chp. 1 Sec. 7.3]{BeLiPa-78}.  It in fact follows from the divergence structure of the equation, exactly as in the same way as the problem with regular boundary conditions.  There, they identify the difference between the rational and irrational cases of $n$, in domains that are more general than $\Sigma^1$.  The problem becomes more challenging when one wishes to know more precise information about the boundary layer of $u^\ep$ near $\Sigma_0$, and one seeks a rate of convergence towards the effective solution.  In that case, the analysis is much more delicate, and some works on this boundary homogenization structure in the Dirichlet and \emph{co-normal} Neumann setting for elliptic \emph{systems} include Avellaneda-Lin \cite{AvellanedaLin-1987CompactnessHomogDivEqCPAM} and Kenig-Lin-Shen  \cite{KenigLinShen2013HomogEllipticSystNeumannCondJAMS}, \cite{KenigLinShen2014-PeriodicHomogGreenAndNeumannFunctionsCPAM}, where the boundary layers are studied for equations with regular boundary conditions.  In the realm of studying \emph{oscillatory} boundary conditions, there is the work of G\'erard-Varet-Masmoudi \cite{GerardVaretMasmoudi-2012HomogBoundarLayerActa}, followed by that of Armstrong-Kuusi-Mourrat-Prange \cite{ArmstrongKuusiMourratPrange-2016QuantitativeBoundaryLayersArXiv} and Shen-Zhuge \cite{ShenZhuge-2016BoundaryLayerNeumannArXiv}.  The works \cite{ArmstrongKuusiMourratPrange-2016QuantitativeBoundaryLayersArXiv} and \cite{ShenZhuge-2016BoundaryLayerNeumannArXiv} both reduce their analysis to that of integrals on the boundary, $\partial\Om$.  \cite{ShenZhuge-2016BoundaryLayerNeumannArXiv} is close to our work here, as they treat the oscillatory Neumann problem with a \emph{co-normal} condition; it is interesting to notice a comparison/contrast with our work in the sense that both are focused on boundary analysis, yet in \cite{ShenZhuge-2016BoundaryLayerNeumannArXiv} the important object is a potential operator with an oscillatory integral (via the Neumann function), whereas in our work we study an integro-differential operator on the boundary.

\textbf{The gradient independent case.}  Recently, the boundary homogenization for both cases of oscillatory Dirichlet and Neumann conditions with fully nonlinear uniformly elliptic equations in the interior (with $x/\ep$ dependence) has been solved in many situations, including some general domains.  All of the following works treat problems with purely second order equations in the interior, hence the presence of the term $(1/\ep)B(x/\ep)$ in (\ref{eqIntro:MainEpScale}) is one aspect that separates this work from the following (also, they treat nonlinear equations, while we work with linear ones).  The work of Choi-Kim-Lee \cite{ChKiLe-2012HomogNeumann} solved the problem in a strip domain, as $\Sigma^1$, with a translation invariant nonlinear equation and oscillatory Neumann condition (subsequently the authors, in \cite{GuSc-2014NeumannHomogPart1DCDS-A}, gave a different proof along the lines presented in this work).  Then Choi-Kim \cite{ChoiKim-2014HomogNeumannJMPA} treated the problem in more general domains with also an oscillatory equation, $F(D^2u^\ep,x/\ep)=0$, in the interior.  In \cite{ChoiKim-2014HomogNeumannJMPA} it became clear that equations in the simple domain, like (\ref{eqIntro:MainEpScale}), can serve as a sort of corrector problem for the boundary behavior of $u^\ep$ in more general domains, and they also noted there are two separate questions that must be addressed: the solvability of the corrector problem, such as Theorem \ref{thm:Main}, and the continuity properties of $\bar g$ as a function of the normal $n(x)$, for $x$ in the boundary.  The related Dirichlet homogenization in general domains was solved by Feldman \cite{Feldman-2014HomogDirichletGeneralDomainJMPA}, where crucially it was noted that there may possibly be instances in which the effective boundary condition will be discontinuous, but nonetheless one can control the size of the set of discontinuities in a way that still guarantees the effective Dirichlet problem admits unique solutions.  The issue of continuity / discontinuity of the effective boundary conditions has been further studied by Feldman-Kim \cite{FeldmanKim-2015ContDiscontBoundaryLayerArXiv}.   Finally, the homogenization of random oscillatory boundary data with a translation invariant uniformly elliptic equation in the interior was recently obtained for some types of random environments by Feldman-Kim-Souganidis \cite{FeldmanKimSouganidis-2014RandomBoundaryHomogJMPA}.

As one final note, we mention that in Bal-Jing \cite{BalJing-2011CorrectorSingularGreenRandomBoundaryCommMathSci}, a Dirichlet-to-Neumann mapping was utilized-- as we do here-- for an equation that is translation invariant in the interior and has a Robin boundary condition with a random, stationary ergodic term.

\subsection{Assumptions}\label{sec:Assume}

\begin{assume}\label{assume:A}
$A: \real^{d+1}\to \mathcal{S}(d+1)$ (symmetric $(d+1) \times (d+1)$ matrices) and for some $\lam<\Lam$, for all $x$, $\lam\Id\leq A(x)\leq\Lam\Id$.  $A\in C^\gam(\real^{d+1})$ and is $\integer^{d+1}$ periodic.
\end{assume}

\begin{assume}\label{assume:B}
	$B:\real^{d+1}\to\real^{d+1}$, $B\in C^\gam(\real^{d+1})$, and $B$ is $\integer^{d+1}$ periodic.
\end{assume}

\begin{rem}\label{rem:InvariantMeasureExistsInL2}
	Assume that $Lu(y)=A_{ij}(y)u_{y_iy_j}(y) + B(y)\cdot\grad u(y)$ and that $L^*u=(A_{ij}(\cdot)u)_{y_iy_j}-\Div(uB(\cdot))$ is the formal adjoint of $L$.   Under Assumptions \ref{assume:A} and \ref{assume:B}, there exists a unique invariant measure, $m\in L^2([0,1]^{d+1})$, which is periodic and solves
	\begin{align*}
		L^*m=0,
	\end{align*}
	subject to $m>0$ and 
	\begin{align*}
		\int_{[0,1]^{d+1}} m(y) dy = 1.
	\end{align*}
	(We mean that
	\begin{align*}
		\int m(y)L\phi(y)dy=0
	\end{align*}
	for all periodic $\phi\in C^2\intersect L^\infty$.)  Some details of $m$ are briefly expanded upon in Appendix \ref{sec:InvariantMeasure}.
\end{rem}

\begin{assume}\label{assume:ABCompatible}
	$B$ must satisfy the compatibility (or ``centering'') condition that
	\begin{align*}
		\int_{[0,1]^{d+1}}B(y)m(y)dy=0.
	\end{align*}
	It prevents trajectories of the related rescaled stochastic process from ``escaping to infinity'', and gives compactness of the measures for the law of the processes.
\end{assume}

The main point of our method is that we study an almost periodic homogenization problem on $\Sigma_0$.  We use the following definition for almost periodic functions.

\begin{DEF}\label{def:AlmostPeriodic}
	$f:\Sigma_0\to\real$ is almost periodic if for any $\delta>0$, the set of $\del$-almost periods of $f$, 
		\[
		E_\delta := \{\tau\in\Sigma_0\ :\ \sup_{x\in\Sigma_0}\abs{f(x+\tau)-f(x)}< \del\},
		\]
		satisfies the following property: there exists a compact set, $K\subset \Sigma_0$, such that 
		\[
		(z+K)\intersect E_\delta \not= \emptyset\ \textnormal{for all}\ z\in\Sigma_0.
		\]
\end{DEF}

\begin{rem}\cite[Proposition 1.2]{Shubin-1978AlmostPeriodic}
	Two other equivalent and classical formulations are that $f:\Sigma_0\to\real$ is \emph{almost periodic} if
	\begin{itemize}
		\item[(i)] $f$ can be uniformly approximated on $\Sigma_0$ by trigonometric polynomials.

		\item[(ii)] The set
		$\displaystyle
			\left\{ f(\cdot+z)\ :\ z\in\Sigma_0   \right\}
		$
		is precompact in the space $L^\infty(\Sigma_0)$.
	\end{itemize}
\end{rem}

\subsection{Notation}\label{sec:Notation}

\begin{itemize}
	\item We will assume that $x\in\real^{d+1}$ is written in the coordinates relative to $\Sigma_0$, that is $x=(\hat x, x_{d+1})$, where $\hat x\in\Sigma_0$ and $x_{d+1}\in\textnormal{span}(n)$.
	\item $\displaystyle \Sigma^r = \{x\in\real^{d+1}\ :\ 0<x\cdot n<r\}$
	\item $\displaystyle \Sigma_r = \{x\in\real^{d+1}\ :\ x\cdot n=r\}$
	\item $\displaystyle \Sigma_0 = \{x\in\real^{d+1}\ :\ x\cdot n=0\}$
	\item typically, $x$, is the original macroscopic variable, as in (\ref{eqIntro:MainEpScale})
	\item typically, $y$, is the microscopic variable, as in (\ref{eqMainIdea:BulkDirichletAtMicroScale}), which results from a rescaling such as $v(y) = (1/\ep)u^\ep(\ep y)$.
	\item $\displaystyle Lu(y)=A_{ij}(y)u_{y_iy_j}(y) + B(y)\cdot\grad u(y)$
	\item $\displaystyle B_R\subset \real^{d+1}$, $\displaystyle B_R'\subset \Sigma_0$, $B_R^+=B_R\intersect \Sigma^r$ for $r>R$,
	\item $\M^+$ and $M^-$ are the Pucci operators from the second order elliptic theory (see \cite[Chp 2]{CaCa-95}), defined as 
	\[
	\M^+(D^2u)=\sup_{\lam\Id\leq P\leq \Lam\Id}\left( \Tr(PD^2u) \right)
	\ \ \text{and}\ \ 
	\M^-(D^2u)=\inf_{\lam\Id\leq P\leq \Lam\Id}\left( \Tr(PD^2u) \right).
	\]
\end{itemize}

%%%%%%%%%%%%%%%%%%%%%%%%%%%%%%%%%%%%%%%%%%%%%%
%%%%%%%%%%%%%%%%%%%%%%%%%%%%%%%%%%%%%%%%%%%%%%
%%%%%%%%%%%%%%%%%%%%%%%%%%%%%%%%%%%%%%%%%%%%%%
%%%%%%%%%%%%%%%%%%%%%%%%%%%%%%%%%%%%%%%%%%%%%%
%%%%%%%%%%%%%%%%%%%%%%%%%%%%%%%%%%%%%%%%%%%%%%
\section{Main Ideas of The Proof}\label{sec:MainIdeas}

In this section, we give the sketch of the proof, without explicit justifications.  The details appear in Sections \ref{sec:HabemusBarrieram}, \ref{sec:StructureOfIep}, \ref{sec:HomogMainArgs}.  As briefly mentioned in the introduction, our strategy in this approach centers on the Dirichlet-to-Neumann (D-to-N) operator.  In fact, we will use two such operators at two different scales.  First is the original scale of the macroscopic variables, with $W^\ep_u$ the unique (classical) solution of
\begin{align}\label{eqMainIdea:BulkDirichletAtMacroScale}
	\begin{cases}
		\Tr(A(\frac{x}{\ep})D^2W^\ep_u) + \frac{1}{\ep}B(\frac{x}{\ep})\cdot\grad W^\ep_u = 0\ \ &\text{in}\ \Sigma^1\\
		W^\ep_u = 0\ \ &\text{on}\ \Sigma_1\\
		W^\ep_u = u\ \ &\text{on}\ \Sigma_0,
	\end{cases}
\end{align}
and we define
\begin{align*}
	\I^1: C^{1,\gam}(\Sigma_0)\to C^{\tilde\gam}(\Sigma_0)
\end{align*}
via
\begin{align*}
	\I^1(u,x) = \partial_n W^\ep_u(x).
\end{align*}
Similarly, we will also need the D-to-N mapping for the equation in microscopic variables.  This changes the domain of definition of the equation, to the larger set, $\Sigma^{1/\ep}$:
\begin{align}\label{eqMainIdea:BulkDirichletAtMicroScale}
	\begin{cases}
		\Tr(A(y)D^2V^\ep_u) + B(y)\cdot\grad V^\ep_u = 0\ \ &\text{in}\ \Sigma^{1/\ep}\\
		V^\ep_u = 0\ \ &\text{on}\ \Sigma_{1/\ep}\\
		V^\ep_u = u\ \ &\text{on}\ \Sigma_0.
	\end{cases}
\end{align}
We define
\begin{align*}
	\I^{1/\ep}: C^{1,\gam}(\Sigma_0)\to C^{\tilde\gam}(\Sigma_0)
\end{align*}
via
\begin{align}\label{eqMainIdea:IinMicroVars}
	\I^{1/\ep}(u,y) = \partial_n V^\ep_u(y).
\end{align}
We note that $u\in C^{1,\gam}(\Sigma_0)$, implies that $V^\ep_u, W^\ep_u\in C^{1,\gam}(\overline{\Sigma^1})$, which makes $\I^1$ and $\I^{1/\ep}$ well defined.

Many subsequent arguments will invoke various regularity estimates involving $u^\ep$ and $g(\cdot/\ep)$.  However, these will not be useful unless it can first be shown that $\norm{u^\ep}_{L^\infty}\leq C$, independently from $\ep$.  Without assumptions on $B$, such as the ``centering condition'' in Assumption \ref{assume:B}, these estimates are false.  However, it turns out that one can use the interior homogenization for an auxiliary barrier equation to derive the needed control on $\norm{u^\ep}_{L^\infty}$. 
Thus, since $g\in C^{\gam}(\Sigma_0)$, standard regularity theory (see Proposition \ref{prop:C1GamRegNeumannProblem} as well as the $L^\infty$ estimate in Section \ref{sec:LInfinityEstimate}) indicates that $u^\ep\in C^{1,\gam}(\overline{\Sigma^1})$ with $[u^\ep]_{C^\gam}$ independent of $\ep$ and $[\grad u^\ep]_{C^\gam}$ depending on $\ep$.  Thus, this says that up to subsequences, $u^\ep$ will have local uniform limits in $\overline{\Sigma^1}$.  The main question is, can we characterize all possible such limits?

For the moment, \emph{assume} that there is a \emph{unique} $\bar v\in C^{\gam}(\Sigma_0)$ such that $u^\ep|_{\Sigma_0}\to \bar v$.  Letting $\bar u$ be any possible local uniform limit of $u^\ep$ in $\Sigma^1$, we know by the perturbed test function method for viscosity solutions of (\ref{eqIntro:MainEpScale}), that $\bar u$ must solve (see Proposition \ref{prop:PTFM})
\begin{align*}
	\Tr(\Bar A D^2\bar u)=0\ \text{in}\ \Sigma^1,
\end{align*}
where $\bar A$ is the unique constant matrix defined (\ref{eqPerturb:ABarDef}).  Hence, by the global $C^\gam$ continuity of $\bar u$ and the \emph{assumed} convergence of $u^\ep|_{\Sigma_0}\to \bar v$ for a unique $\bar v$, we see that $\bar u$ is the unique solution of 
\begin{align*}
	\begin{cases}
		\Tr(\Bar A D^2\bar u)=0\ &\text{in}\ \Sigma^1,\\
		\bar u = 0\ &\text{on}\ \Sigma_1\\
		\bar u =\bar v\ &\text{on}\ \Sigma_0. 
	\end{cases}
\end{align*} 
Thus, assuming $\bar v\in C^{1,\gam}(\Sigma_0)$, then $\bar u\in C^{1,\gam}(\overline{\Sigma^1})$.  We obtain, a posteriori that the unique effective Neumann condition is then
\begin{align*}
	\bar g = \partial_n \bar u.
\end{align*}
(We note that in the flat geometry of $\Sigma^1$, it will be that $\bar g$ is a constant.)

Now, why will it be true that there is a unique $\bar v$, such that $u^\ep|_{\Sigma_0}\to \bar v$? As mentioned in the introduction, answering this question is the main result of our work, which we stated as Theorem \ref{thm:LimitOfBoundaryValues}.  Because $u^\ep\in C^{1,\gam}$, the equation (\ref{eqIntro:MainEpScale}) implies that $u^\ep|_{\Sigma_0}$ is the unique classical solution of
\begin{align}\label{eqMainIdea:IntDiffHomog}
	\I^1(u^\ep|_{\Sigma_0},x) = g(\frac{x}{\ep}).
\end{align}

We will show that the irrationality of $n$, with the periodicity of $A$, $B$, $g$, imply that $\I^1$ is an almost periodic operator on $\Sigma_0$ and $g$ is an almost periodic function.  We will then invoke standard ideas from Hamilton-Jacobi homogenization theory, like Ishii \cite{Ishii-2000AlmostPeriodicHJHomog}, to show that the limit of $u^\ep|_{\Sigma_0}$ is a unique constant.   We note that an interesting aspect of our work is the need to establish estimates on $\norm{u^\ep}_{L^\infty}$ and $\norm{\partial_n u^\ep}_{L^\infty}$, which begins in the next section.

%%%%%%%%%%%%%%%%%%%%%%%%%%%%%%%%%%%%%%%%%%%%%%
%%%%%%%%%%%%%%%%%%%%%%%%%%%%%%%%%%%%%%%%%%%%%%
%%%%%%%%%%%%%%%%%%%%%%%%%%%%%%%%%%%%%%%%%%%%%%
%%%%%%%%%%%%%%%%%%%%%%%%%%%%%%%%%%%%%%%%%%%%%%
%%%%%%%%%%%%%%%%%%%%%%%%%%%%%%%%%%%%%%%%%%%%%%
\section{The Construction of Barriers and a bound for $\norm{u^\ep}_{L^{\infty}}$}\label{sec:HabemusBarrieram}

This section is dedicated to a deceptively simple result; the construction of barriers, contained in Proposition \ref{prop:BarrierNonTrivNormalDeriv}, below.  The interesting part is that when $B\equiv 0$ in (\ref{eqIntro:MainEpScale}), even for fully nonlinear equations, this proposition is trivial from the observation that affine functions solve the equation in $\Sigma^1$ (more on this in Section \ref{sec:ChoiKim}).  Even more intriguing is that the outcome of Proposition \ref{prop:BarrierNonTrivNormalDeriv} is false, unless homogenization occurs in the interior of $\Sigma^1$.   In fact, as the reader will see, the result utilizes the convenient result that the interior homogenization of the regular Dirichlet problem enjoys a \emph{global} rate up to the boundary of $\Sigma^1$; this was proved in the original work of Bensoussan-Lions-Papanicolaou for the non-divergence equation, (\ref{eqIntro:MainEpScale}), \cite[Chp. 3, Sec 5, Thm. 5.1]{BeLiPa-78} and in the divergence setting by Avellaneda-Lin \cite{AvellanedaLin-1987CompactnessHomogDivEqCPAM}.  We are not certain if such heavy machinery is necessary, but it sufficed for the present investigation.

We then use the barriers to obtain the crucial estimate that is needed to begin the homogenization procedure, namely that
\begin{align*}
	\norm{u^\ep}_{L^\infty(\Sigma^1)}\leq C,\ \text{uniformly in}\ \ep.
\end{align*}
It is interesting in this case, that the ``centering'' assumption on $B$ and the homogenization for regular boundary conditions are used to get and estimate for $\norm{u^\ep}_{L^\infty}$.

\subsection{The barrier}

\begin{prop}[Barrier]\label{prop:BarrierNonTrivNormalDeriv}
	There exist (universal) constants, $c_1$ and $c_2$, $0<c_1<c_2$, such that
	\begin{align}\label{eqBarr:NormalDerivEst}
		-c_2 \leq \partial_n \phi^\ep \leq -c_1<0,
	\end{align}
	where $\phi^\ep$ is the unique solution of the problem
	\begin{align}\label{eqBarr:EqForPhiEp}
		\begin{cases}
		\Tr(A(y)D^2\phi^\ep(y))+B(y)\cdot\grad \phi^\ep(y)=0\ &\text{in}\ \Sigma^{1/\ep}\\
		\phi^\ep = 0\ &\text{on}\ \Sigma_{1/\ep}\\
		\phi^\ep = 1/\ep\ &\text{on}\ \Sigma_0.
		\end{cases}
	\end{align}
\end{prop}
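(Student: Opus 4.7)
\textbf{Proof plan for Proposition \ref{prop:BarrierNonTrivNormalDeriv}.}  The approach is to rescale to macroscopic variables, import the homogenization rate for the regular Dirichlet problem, and build correctors-based barriers whose affine part carries the desired normal slope.

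\textbf{Step 1 (Macroscopic reformulation).}  Set $v^\ep(x) := \ep\,\phi^\ep(x/\ep)$ on $\Sigma^1$.  A direct chain-rule computation shows that $v^\ep$ is the unique solution of the macroscopic Dirichlet problem
\[
\Tr\!\big(A(x/\ep)D^2 v^\ep\big) + \tfrac{1}{\ep}B(x/\ep)\cdot\grad v^\ep = 0 \ \text{in}\ \Sigma^1,\quad v^\ep|_{\Sigma_0}=1,\ v^\ep|_{\Sigma_1}=0,
\]
and that $\grad v^\ep(x)=(\grad\phi^\ep)(x/\ep)$.  Consequently the sought estimates $-c_2\leq\partial_n\phi^\ep\leq -c_1<0$ on $\Sigma_0$ are equivalent to the same bounds on $\partial_n v^\ep$ over $\Sigma_0$.

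\textbf{Step 2 (Global rate of homogenization).}  The effective problem for $v^\ep$ has limit $\bar v(x)=1-x\cdot n$ (since $\bar A$ is constant and the domain is a flat strip).  Apply the global rate-of-convergence theorem for the non-divergence periodic Dirichlet problem of Bensoussan-Lions-Papanicolaou to conclude
\[
\|v^\ep-\bar v\|_{L^\infty(\overline{\Sigma^1})}\leq C\ep^{\alpha},
\]
for some universal $\alpha\in(0,1)$ and $C$.  This is the ``heavy machinery'' the paper alludes to, and it is the only place where homogenization enters the argument.

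\textbf{Step 3 (Corrector-based barriers).}  For a unit vector $p$, let $\chi_p$ denote the $\integer^{d+1}$-periodic corrector, which under the centering condition (Assumption \ref{assume:ABCompatible}) solves $L\chi_p=-B(y)\cdot p$ and therefore makes $p\cdot y+\chi_p(y)$ an exact $L$-harmonic function.  For $c_1\in(0,1)$ to be chosen, define
\[
\Phi^+_\ep(x):=1-c_1(x\cdot n)+\ep c_1\chi_{-n}(x/\ep)+\ep c_1\|\chi_{-n}\|_\infty.
\]
Then $L^\ep\Phi^+_\ep=0$ exactly, while the shift $\ep c_1\|\chi_{-n}\|_\infty$ guarantees $\Phi^+_\ep\geq 1=v^\ep$ on $\Sigma_0$; choosing $c_1<\tfrac12$ and $\ep$ small forces $\Phi^+_\ep\geq 0=v^\ep$ on $\Sigma_1$ as well.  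The comparison principle then gives $v^\ep\leq\Phi^+_\ep$ in $\Sigma^1$.  An analogous lower barrier $\Phi^-_\ep$ built from a larger slope $-c_2$ yields $v^\ep\geq\Phi^-_\ep$.

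\textbf{Step 4 (Upgrading to pointwise normal derivative bounds).}  In microscopic coordinates the difference
\[
r^\ep(y):=\phi^\ep(y)-\big(1/\ep-y\cdot n+\chi_{-n}(y)\big)
\]
satisfies $Lr^\ep=0$ on $\Sigma^{1/\ep}$ with $r^\ep|_{\Sigma_0\cup\Sigma_{1/\ep}}=-\chi_{-n}(y)$, so by the maximum principle $\|r^\ep\|_{L^\infty}\leq\|\chi_{-n}\|_\infty$, uniformly in $\ep$.  Unit-scale boundary Schauder estimates for $Lr^\ep=0$ (whose coefficients in microscopic variables are $\ep$-independent) with the $C^{1,\gam}$-regular boundary datum $-\chi_{-n}|_{\Sigma_0}$ give $|\partial_n r^\ep(y_0)|\leq C$ for a universal $C$.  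Combined with $|\partial_n\chi_{-n}|\leq C$, one obtains the two-sided bound $|\partial_n\phi^\ep(y_0)+1|\leq C$, which is precisely the content of the lower bound $\partial_n\phi^\ep\geq -c_2$.  For the upper bound one exploits the global rate of Step 2: comparing $v^\ep$ with $\Phi^+_\ep$ along a normal segment of macroscopic length $h\gtrsim \ep^\alpha$ produces an averaged slope $[v^\ep(x_0+hn)-1]/h\leq -c_1+o(1)$, and the uniform $C^{1,\gam}$ regularity of $\phi^\ep$ just established promotes this to the pointwise bound $\partial_n\phi^\ep\leq -c_1$ for some universal $c_1>0$ and all sufficiently small $\ep$; the bounded range $\ep\in[\ep_0,1]$ is handled by standard regularity.

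\textbf{Main obstacle.}  The delicate point is Step 4: the affine piece of the barrier contributes $-c_1$ to the slope, but the corrector and residual each contribute $O(1)$ oscillations, so without an \emph{a priori} uniform-in-$\ep$ $C^{1,\gam}$ control near $\Sigma_0$ the principal term might be overwhelmed.  The global rate of Step 2 is crucial precisely because it controls the residual $r^\ep$ in $L^\infty$ \emph{up to} both flat boundaries, which is what feeds the boundary Schauder estimate and closes the argument.
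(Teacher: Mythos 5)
Your Steps 1 and 2 (rescaling to the macroscopic strip and invoking the global $L^\infty$ rate of Bensoussan--Lions--Papanicolaou) match the paper exactly, and your derivation of the \emph{lower} bound $\partial_n\phi^\ep\geq -c_2$ via the decomposition $\phi^\ep=(1/\ep-y\cdot n+\chi_{-n})+r^\ep$, the maximum principle, and boundary Schauder estimates is a legitimate alternative to the paper's explicit exponential subsolution. The genuine gap is in the \emph{upper} bound $\partial_n\phi^\ep\leq-c_1<0$, which is the real content of the proposition. Your barrier $\Phi^+_\ep$ lies strictly above $v^\ep$ on $\Sigma_0$ (because of the added constant $\ep c_1\norm{\chi_{-n}}_\infty$, and in any case because of the $O(\ep)$ oscillation of the corrector term there), so the comparison $v^\ep\leq\Phi^+_\ep$ gives no information about $\partial_n v^\ep$ at boundary points: a one-sided normal-derivative comparison requires the two functions to \emph{touch} at the point in question. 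Your fallback --- an averaged slope $[v^\ep(x_0+hn)-1]/h\leq-c_1+o(1)$ over a macroscopic segment $h\gtrsim\ep^\alpha$, promoted to a pointwise bound by ``uniform $C^{1,\gam}$ regularity'' --- does not close. In microscopic variables that segment has length $H=h/\ep\to\infty$; the mean value theorem produces a point $\xi$ on it with $\partial_n\phi^\ep(\xi)\leq-c_1+o(1)$, but transferring this to $y_0\in\Sigma_0$ via the gradient H\"older seminorm costs an error of order $[\grad\phi^\ep]_{C^\gam}\,H^{\gam}$, which blows up. Shrinking $h$ to $O(\ep)$ keeps $H=O(1)$, but then the error $C\ep/h$ in the averaged slope is $O(1)$, and the leftover $O(1)$ contributions of $\partial_n\chi_{-n}$ and $\partial_n r^\ep$ (which you correctly flag in your ``main obstacle'' paragraph) can overwhelm the principal term.

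The paper closes this with a device your outline is missing. After normalizing $\phi^\ep=0$ on $\Sigma_0$, the global rate gives a \emph{fixed} microscopic height $t^*=C+1$, independent of $\ep$, at which $\phi^\ep\leq-1$; one then builds an explicit one-dimensional exponential supersolution $M(y_{d+1})$ of $LM\leq0$ in the slab $\Sigma^{t^*}$ with $M(0)=0=\phi^\ep|_{\Sigma_0}$ and $M(t^*)=-1\geq\phi^\ep|_{\Sigma_{t^*}}$. Because $M$ \emph{coincides} with $\phi^\ep$ on all of $\Sigma_0$ and dominates it in the slab, one obtains the pointwise bound $\partial_n\phi^\ep\leq M'(0)=a_0<0$, with $a_0$ depending only on $\lam$, $\Lam$, $\norm{B}_{L^\infty}$ and $t^*$ --- no corrector, no uniform $C^{1,\gam}$ control, and no averaged-to-pointwise upgrade is needed. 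To salvage your corrector route you would need a barrier that actually touches $v^\ep$ along $\Sigma_0$, which the corrector's $O(\ep)$ boundary oscillation (i.e.\ $O(1)$ in microscopic variables, the same order as the slope you are trying to isolate) prevents.
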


An immediate consequence of the barrier behavior in Proposition \ref{prop:BarrierNonTrivNormalDeriv} is

\begin{prop}[Estimates for The Neumann Problem]\label{prop:NemannEstimates}
	 There exists an universal constant, $C$, so that if for some $\del\geq0$, $\eta^\ep$ solves
	\begin{align}\label{eqBarr:EqForEtaEp}
		\begin{cases}
		\Tr(A(y)D^2\eta^\ep(y))+B(y)\cdot\grad \eta^\ep(y)=0\ &\text{in}\ \Sigma^{1/\ep}\\
		\abs{\eta^\ep} \leq \del\ &\text{on}\ \Sigma_{1/\ep}\\
		\abs{\partial_n\eta^\ep} \leq \del\ &\text{on}\ \Sigma_0,
		\end{cases}
	\end{align}
	then it holds that
	\begin{align*}
		\norm{\eta^\ep}_{L^\infty(\overline{\Sigma^{1/\ep}})} \leq C\frac{\del}{\ep}.
	\end{align*}
\end{prop}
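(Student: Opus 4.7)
The plan is to use the barrier $\phi^\ep$ from Proposition \ref{prop:BarrierNonTrivNormalDeriv} together with a comparison argument to dominate $\eta^\ep$ from above and from below. The key feature of $\phi^\ep$ is that it solves the same homogeneous equation as $\eta^\ep$ on $\Sigma^{1/\ep}$, is nonnegative, vanishes on $\Sigma_{1/\ep}$, and has an inward normal derivative on $\Sigma_0$ bounded away from zero: $\partial_n \phi^\ep \leq -c_1 < 0$. This makes $\phi^\ep$ well suited to absorbing the Neumann data of $\eta^\ep$ after multiplication by an appropriate constant.

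Concretely, I would set $M := c/c_1$ and $w := \eta^\ep - M\phi^\ep - c$. Then $Lw = 0$ in $\Sigma^{1/\ep}$; on $\Sigma_{1/\ep}$, $\phi^\ep = 0$ and $\eta^\ep \leq c$, so $w \leq 0$; and on $\Sigma_0$, the estimate (\ref{eqBarr:NormalDerivEst}) gives
\begin{align*}
\partial_n w \;=\; \partial_n \eta^\ep - M\,\partial_n \phi^\ep \;\geq\; -c + \tfrac{c}{c_1}\,c_1 \;=\; 0.
\end{align*}
If $w$ attained a positive maximum on $\overline{\Sigma^{1/\ep}}$, the strong maximum principle for $L$ would exclude any interior point, while $w \leq 0$ on $\Sigma_{1/\ep}$ rules out that face; a maximum on $\Sigma_0$ would force, by Hopf's lemma applied with the outward normal $-n$, that $\partial_n w < 0$ strictly there, contradicting $\partial_n w \geq 0$. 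Hence $w \leq 0$ throughout $\overline{\Sigma^{1/\ep}}$, i.e., $\eta^\ep \leq M\phi^\ep + c$.

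For the lower bound I would apply exactly the same argument to $-\eta^\ep$, which solves the same equation and obeys the same modulus bounds for its values on $\Sigma_{1/\ep}$ and for its normal derivative on $\Sigma_0$. Combining the two one-sided estimates and using the elementary bound $0 \leq \phi^\ep \leq 1/\ep$ (from the maximum principle applied to $\phi^\ep$ with its own boundary data), one obtains
\begin{align*}
\abs{\eta^\ep} \;\leq\; \tfrac{c}{c_1}\cdot \tfrac{1}{\ep} + c \;\leq\; \tfrac{C\,c}{\ep}
\end{align*}
for a universal $C$ and all $\ep \in (0,1]$, which is the desired estimate.

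The only genuine obstacle lies upstream in Proposition \ref{prop:BarrierNonTrivNormalDeriv} itself: the two-sided bound (\ref{eqBarr:NormalDerivEst}) is what makes $\phi^\ep$ a workable barrier, and, as the authors emphasize, its proof is nontrivial in the presence of the singular drift $B$, relying on the global rate for the interior homogenization of the Dirichlet problem. Granting that proposition, the present Neumann estimate is essentially a direct application of comparison and the Hopf lemma, as outlined above.
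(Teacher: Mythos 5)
Your proof is correct and follows essentially the same route as the paper: both take $b\phi^\ep + c$ with $b\approx c/c_1$ as a supersolution barrier built from Proposition \ref{prop:BarrierNonTrivNormalDeriv} and conclude by comparison, the paper simply citing the Neumann comparison principle (Proposition \ref{rem:NeumannComparison}) where you re-derive it via the strong maximum principle and Hopf's lemma. The only caveat in your inline version is that $\Sigma^{1/\ep}$ is an unbounded strip, so the supremum of $w$ need not be attained; the cited comparison principle (proved with a localization such as the bump functions $\phi_R$) is what covers that technicality.
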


First we will prove the estimates for the Neumann problem, Proposition \ref{prop:NemannEstimates}, and then we will prove the Barrier, Proposition \ref{prop:BarrierNonTrivNormalDeriv}.

\begin{proof}[Proof of Proposition \ref{prop:NemannEstimates}]
	We will only prove one side of the bound for $\eta^\ep$, and the reverse inequality is analogous.  We begin with a note that due to the positive $1$-homogeneity of the operator, $L$, any positive multiple of $\phi^\ep$ will be a supersolution, i.e. $L\phi^\ep\leq 0$. We claim that for an appropriate $b>0$, the function,
	\begin{equation}
		\tilde \phi^\ep = b\phi^\ep + \del
	\end{equation}
	is a supersolution of (\ref{eqBarr:EqForEtaEp}).  Indeed, choosing $b>0$ large enough (w.l.o.g. $b>1$), we obtain
	\begin{align*}
		\partial_n \tilde \phi^\ep < -bc_1 < -\del \leq \partial_n\eta\ \text{on}\ \Sigma_0,
	\end{align*}
	where $c_1$ is the constant appearing in Proposition \ref{prop:BarrierNonTrivNormalDeriv}.  It is useful to note that the choice of $b$ is dictated by
	\[
	bc_1>\del,\ \ \text{so choosing}\ \ b=2\del/c_1\ \ \text{will suffice}.
	\]  
	
	Furthermore, by the construction of $\phi^\ep$,
	\begin{align*}
		\tilde \phi^\ep|_{\Sigma_{1/\ep}} = \del \geq \eta|_{\Sigma_{1/\ep}}.
	\end{align*}
	Finally, by the positive 1-homogeneity of (\ref{eqBarr:EqForPhiEp}), we have not changed the equation for $\phi^\ep$ and $\tilde \phi^\ep$.  
	
	The upper bound on $\eta^\ep$ is now immediate from the comparison of sub and super solutions of Neumann problems (Proposition \ref{prop:NeumannComparison}), which implies
	\begin{align*}
		\eta^\ep\leq \tilde \phi^\ep \leq b/\ep + \del\ \text{in}\ \overline{\Sigma^{1/\ep}},
	\end{align*}
	and by the fact that $b=2\del/c_1$, gives the desired result.
\end{proof}

\begin{proof}[Proof Proposition \ref{prop:BarrierNonTrivNormalDeriv}]

The key component for our proof of Proposition \ref{prop:BarrierNonTrivNormalDeriv} is the result in \cite[Chp. 3, Sec 5, Thm 5.1]{BeLiPa-78} regarding rates of convergence in homogenization, which shows that $\phi^\ep$ must stay within a fixed distance of order 1 of a hyperplane.   (Note, we state the rates result of \cite{BeLiPa-78} as Proposition \ref{prop:RatesHomogGlobal} and provide modifications to allow for our assumptions on $A$ and $B$, which do not require as much regularity as the presentation in \cite{BeLiPa-78}.)

Let us define the rescaling of $\phi^\ep$ as
\begin{align*}
	\rho^\ep(x) = \ep\phi^\ep(\frac{x}{\ep}),
\end{align*}
so we have
\begin{align*}
	D^2\rho^\ep(x)= \ep^{-1}D^2\phi^\ep(\frac{x}{\ep}),\ \ \grad \rho^\ep(x) = \grad\phi^\ep(\frac{x}{\ep})
\end{align*}
Hence, $\rho^\ep$ is the unique solution of 
	\begin{align}\label{eqBarr:EqForRhoEp}
		\begin{cases}
		\Tr(A(\frac{x}{\ep})D^2\rho^\ep(y))+\frac{1}{\ep}B(\frac{x}{\ep})\cdot\grad \rho^\ep(y)=0\ &\text{in}\ \Sigma^{1}\\
		\rho^\ep = 0\ &\text{on}\ \Sigma_{1}\\
		\rho^\ep = 1\ &\text{on}\ \Sigma_0.
		\end{cases}
	\end{align}
Furthermore, by standard homogenization results, e.g. Bensoussan-Lions-Papanicolaou \cite[Chp. 3, Sec 4 and 5]{BeLiPa-78}, also summarized here in Section \ref{sec:Perturbed} we know that there is a unique \emph{constant} matrix $\bar A$ so that $\rho^\ep\to\bar \rho$, and $\bar \rho$ is the unique solution of
	\begin{align}
		\begin{cases}
		\Tr(\bar A D^2\bar\rho(y))=0\ &\text{in}\ \Sigma^{1}\\
		\rho^\ep = 0\ &\text{on}\ \Sigma_{1}\\
		\rho^\ep = 1\ &\text{on}\ \Sigma_0.
		\end{cases}
	\end{align}
Since this equation is purely second order, we see that $\bar \rho(x) = 1-x_{d+1}$.  Now we use the fact that there is a global rate of convergence of $\rho^\ep$, for example in \cite[Chp. 3, Thm 5.1]{BeLiPa-78} (see also Proposition \ref{prop:RatesHomogGlobal}).  Namely, there is a universal constant, $C$, so that
\begin{align*}
	\norm{\rho^\ep-\bar \rho}_{L^\infty(\overline{\Sigma^1})}\leq C\ep.
\end{align*}
We note also, for the divergence case ($B=\Div(A)$), we could utilize the global rate from \cite[Theorem 5]{AvellanedaLin-1987CompactnessHomogDivEqCPAM}.
In the microscopic variables, this says
\begin{align}\label{eqBarr:RateInMicroVars}
	\norm{\phi^\ep - \frac{1}{\ep}\bar\rho(\ep\cdot)}_{L^\infty(\overline{\Sigma^{1/\ep}})}\leq C.
\end{align}
Finally, we note that since we are ultimately concerned with only $\partial_n \phi^\ep$, we assume that we have subtracted $1/\ep$ from both $\phi^\ep$ and $\ep^{-1}\bar \rho(\ep\cdot)$ so that
\begin{align*}
	\phi^\ep=0\ \text{on}\ \Sigma_0,\ \phi^\ep=-1/\ep\ \text{on}\ \Sigma_{1/\ep},\
	\text{and}\ \bar\rho(y)=-y_{d+1}.
\end{align*} 

These observations are now enough to build a barrier for $\phi^\ep$.  The most important trait of $\phi^\ep$ and $\bar \rho$ is that (\ref{eqBarr:RateInMicroVars}) shows that there will be a \emph{fixed} distance, $t^*$, that is independent of $\ep$ so that $\phi^\ep(y)\leq -1$ for all $y_{d+1}=t^*$.  The fact that $t^*$ is independent of $\ep$ allows to construct a good barrier.  Indeed, we simply choose $t^*=C+1$, so from (\ref{eqBarr:RateInMicroVars})
\begin{align*}
\phi^\ep(y)\leq -1\  \text{for all}\ y_{d+1}=t^*.
\end{align*}

	We will construct an upper barrier, $M$, for $\phi^\ep$. $M$ will be a super solution in $\Sigma^{t^*}$, with $M=0$ on $\Sigma_0$, and $M=-1$ on $\Sigma_{t^*}$.  We note that $M$ is constructed to be a function only of $y_{d+1}$ and $M'\leq0$ globally.  We need
	\begin{align*}
		L(M)=\Tr(A(y)D^2M)+B(y)\cdot\grad M\leq 0\ \text{in}\ \Sigma^{t^*}.
	\end{align*}
	By the uniform ellipticity of $A$,
	\begin{align*}
	 L(M)=\Tr(A(y)D^2M)+B(y)\cdot\grad M\leq \Lam M'' - \norm{B}M',
	\end{align*}
	and so we seek a solution of the form 
	\begin{align*}
		M'' - \frac{C}{\Lam} M' = 0,\ M(0)=0,\ M(t^*)=-1.
	\end{align*}
	Let us denote $C_2=(C/\Lam)$.  The good choice of $M$ will be
	\begin{align*}
		M(t) = \frac{a_0}{C_2}(e^{C_2t}-1),
	\end{align*}
	where $a_0$ is chosen to obtain $M(t^*)=-1$.  That means that 
	\begin{align*}
		\displaystyle
		a_0 = \frac{-C_2}{(e^{C_2t^*}-1)} <0,
	\end{align*}
	and without loss of generality, $C_2\geq 1$, and $t^*\geq 1$.  Thus $M$ is a super solution of $L(M)\leq 0$ in $\Sigma^{t^*}$, $\phi^\ep$ is a solution of $L(\phi^\ep)=0$ in $\Sigma^{t^*}$, $\phi^\ep=M$ on $\Sigma_0$, and $M\geq \phi^\ep$ on $\Sigma_{t^*}$.  Thus 
	\begin{align*}
		\phi^\ep\leq M\ \text{in}\ \overline{\Sigma^{t^*}},\ \text{and}\ \partial_n\phi^\ep(y)\leq M'(0) = a_0<0\ \text{for}\ y\in\Sigma_0.
	\end{align*}
	This concludes the claimed upper bound of the proof, with $c_1=a_0$.
	
	Next, we construct a lower barrier for $\phi^\ep$, which we will call $m$.  For the lower bound construction, we can simply choose $s^*=10$.  We will build $m$ so that
	\begin{align*}
		m'\leq 0\ \text{globally},
	\end{align*}
	and
	\begin{align*}
		L(m)= \Tr(A(y)D^2M)+B(y)\cdot\grad M\geq 0,\ m=0\ \text{on}\ \Sigma_0,\ m=-1/\ep\ \text{on}\ \Sigma^{s_*}
	\end{align*}
	the sign of $m'$ plus the ellipticity says
	\begin{align*}
		L(m) \geq \lam m''+ \norm{B}m'
	\end{align*}
	so we choose it concretely to solve 
	\begin{align*}
		m'' + (C/\lam)m' = 0\ \text{with}\ m(0)=0,\ \text{and}\ m(10)=-10-C.
	\end{align*}
	Hence,
	\begin{align*}
		m(t) = \frac{a_1}{C_3}(1-e^{-C_3t}),
	\end{align*}
	where $C_3=(C/\lam)$.  

 We make the choice
	\begin{align*}
		a_1 = \frac{C_2(-10-C)}{(1-e^{-C_2*10})} < 0.
	\end{align*}
	Furthermore, the rates of convergence of $\phi^\ep\to\bar\rho$ show that 
	\begin{align*}
		\phi^\ep(y)\geq -10 - C\ \text{for all}\ y\ \text{such that}\ y_{d+1}=10,
	\end{align*}
	and hence
	\begin{align*}
		m(10) = -10 - C \leq \phi^\ep(y)\ \text{for}\ y_{d+1}=10.
	\end{align*}
	So, again, we conclude that $m$ is a subsolution in $\Sigma^{s^*}$, and so $\phi^\ep\geq m$.  We note
	\begin{align*}
		m'(0) = a_1 < 0, 
	\end{align*}
	and thus the lower bound holds with $c_2=a_1$.
\end{proof}

\begin{rem}
	The proof of the previous proposition is basically the one feature that separates the proofs for the case where $L$ is independent from the gradient and the case of (\ref{eqIntro:MainEpScale}).  When the equation is independent of the gradient, Proposition \ref{prop:BarrierNonTrivNormalDeriv} is trivial because affine functions are solutions of the equation.
\end{rem}

We conclude this section with one last result that captures the limiting behavior of $\phi^\ep-1/\ep$ as $\ep\to0$.  It will be useful for proving the uniqueness of the limiting boundary condition in Theorem \ref{thm:Main}.

\begin{lem}[Limit in half-space]\label{lem:LimitInInfiniteDomain}
	Assume that $\psi^\ep$ solves
	\begin{align*}
		\begin{cases}
		\Tr(A(y)D^2\psi^\ep(y))+B(y)\cdot\grad \psi^\ep(y)=0\ &\text{in}\ \Sigma^{1/\ep}\\
		\psi^\ep = -1/\ep\ &\text{on}\ \Sigma_{1/\ep}\\
		\psi^\ep = 0\ &\text{on}\ \Sigma_0.
		\end{cases}
	\end{align*}
	Then there exists a unique $\psi^\infty$ such that $\psi^\ep\to\psi^\infty$ uniformly in $\overline{\Sigma^{1}}$ as $\ep\to0$.  Furthermore, $\psi_\infty$ is characterized as the unique solution in a particular growth class to this equation in the infinite domain.
\end{lem}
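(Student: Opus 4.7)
\emph{Overall plan.} I will show that the family $\{\psi^\ep\}$, viewed as functions on the fixed strip $\overline{\Sigma^1} \subset \overline{\Sigma^{1/\ep}}$, is Cauchy in $C(\overline{\Sigma^1})$ as $\ep \to 0$, with the quantitative rate
\begin{align*}
\|\psi^\ep - \psi^{\ep'}\|_{L^\infty(\Sigma^1)} \leq C \max(\ep,\ep'),
\end{align*}
at which point completeness produces the limit $\psi^\infty$. The two ingredients are (i) the global up-to-boundary rate of homogenization for the macroscopic Dirichlet problem on $\Sigma^1$, applied to two different auxiliary problems, and (ii) a classical comparison with a linear-type barrier that this rate puts at my disposal.

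\emph{Two consequences of the global rate.} The rescaled function $\tilde \psi^\ep(x) := \ep\psi^\ep(x/\ep)$ is the unique solution of the macroscopic Dirichlet problem on $\Sigma^1$ with boundary values $0$ on $\Sigma_0$ and $-1$ on $\Sigma_1$. Its effective problem $\Tr(\bar A D^2 \bar\psi)=0$ with these data has the explicit solution $\bar\psi(x) = -x\cdot n$, so Proposition \ref{prop:RatesHomogGlobal} (i.e., \cite[Chp.~3, Thm.~5.1]{BeLiPa-78}) gives $\|\tilde\psi^\ep - \bar\psi\|_{L^\infty(\Sigma^1)} \leq C\ep$, which in microscopic variables reads
\begin{align*}
\abs{\psi^\ep(y) + y\cdot n} \leq C \quad \text{for every } y \in \overline{\Sigma^{1/\ep}},
\end{align*}
with $C$ independent of $\ep$. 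Applying the \emph{same} rate to the auxiliary macroscopic Dirichlet problem with boundary values $0$ and $+1$ instead of $0$ and $-1$ yields a solution $\tilde\omega^\ep$ with $\|\tilde\omega^\ep - x\cdot n\|_{L^\infty(\Sigma^1)} \leq C\ep$. Its microscopic rescaling $\omega^\ep(y) := \tilde\omega^\ep(\ep y)$ is a classical solution of $L\omega^\ep = 0$ in $\Sigma^{1/\ep}$ with $\omega^\ep = 0$ on $\Sigma_0$ and $\omega^\ep = 1$ on $\Sigma_{1/\ep}$, and crucially $\omega^\ep(y) \leq (1 + C)\ep$ for every $y \in \overline{\Sigma^1}$.

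\emph{The comparison step.} Given $\ep' \leq \ep \leq 1$, both $\psi^\ep$ and $\psi^{\ep'}$ are defined on $\Sigma^{1/\ep}$, and $w := \psi^\ep - \psi^{\ep'}$ satisfies $Lw = 0$ there with $w = 0$ on $\Sigma_0$. The first displayed estimate gives $|w| = \abs{-1/\ep - \psi^{\ep'}} \leq C$ on $\Sigma_{1/\ep}$. The function $\xi := C \omega^\ep$ is then a classical two-sided barrier for $w$: $L\xi = 0$, $\xi = 0 = w$ on $\Sigma_0$, and $\xi = C \geq |w|$ on $\Sigma_{1/\ep}$. Because $B$ is bounded in microscopic variables, the standard comparison principle for $L$ yields $|w| \leq \xi$ throughout $\Sigma^{1/\ep}$, and hence $\|\psi^\ep - \psi^{\ep'}\|_{L^\infty(\Sigma^1)} \leq C(1+C)\ep$, which is the Cauchy estimate. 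The main conceptual obstacle is that the $O(1)$ mismatch of $\psi^\ep$ and $\psi^{\ep'}$ at the remote boundary $\Sigma_{1/\ep}$ could a priori propagate undamped to $\Sigma^1$; the whole point of constructing the barrier $\omega^\ep$ from the macroscopic linear Dirichlet problem is to convert that $O(1)$ boundary error into an $O(\ep)$ error in the microscopic strip $\overline{\Sigma^1}$, and this conversion is exactly what the global homogenization rate (used a second time) provides.
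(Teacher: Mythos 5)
Your proof is correct, and it reaches the conclusion by a genuinely different logical route than the paper, even though the decisive analytic object is the same. The paper first produces $\psi^\infty$ by compactness (uniform H\"older estimates give local uniform subsequential limits), identifies it uniquely as the one solution of the infinite half-space problem staying at bounded distance from the hyperplane (a Liouville-type uniqueness statement for bounded solutions in $\Sigma^\infty$), and only then upgrades local uniform convergence to uniform convergence on $\overline{\Sigma^1}$ by sandwiching $\psi^\ep-\psi^\infty$ between $\pm\ep C(\phi^\ep-1/\ep)$. You instead prove the quantitative Cauchy estimate $\norm{\psi^\ep-\psi^{\ep'}}_{L^\infty(\Sigma^1)}\leq C\max(\ep,\ep')$ and invoke completeness, so the limit needs no separate identification. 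Your barrier $\omega^\ep$ (data $0$ on $\Sigma_0$, $1$ on $\Sigma_{1/\ep}$) is exactly $-\ep(\phi^\ep-1/\ep)$ in the paper's notation, and the fact that it is $O(\ep)$ on $\overline{\Sigma^1}$ is the same consequence of the global rate of Proposition \ref{prop:RatesHomogGlobal} that the paper exploits; so the analytic core is shared. What your version buys is the elimination of two auxiliary steps: the compactness extraction, and, more substantially, the uniqueness of bounded solutions of the Dirichlet problem in the unbounded half-space $\Sigma^\infty$, which the paper asserts but which itself requires a Phragm\'en--Lindel\"of-type argument; the cost is only a second application of the rate result to the data pair $(0,1)$, which by linearity is the same application. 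Two small points you should make explicit: the comparison $\abs{w}\leq C\omega^\ep$ on the tangentially unbounded slab $\Sigma^{1/\ep}$ requires the maximum principle for \emph{bounded} sub/supersolutions on a slab (both $w$ and $\omega^\ep$ are bounded there, and the paper relies on the same fact), and $\omega^\ep\geq 0$ (immediate from its boundary data) is what makes $C\omega^\ep$ a two-sided barrier.
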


\begin{proof}
	First, we note that uniqueness of solutions to this equation in half space is not guaranteed unless there are particular sub/super solutions with uniform growth/decay at infinity (this played a role in \cite{BaDaLiSo-2008ErgodicProbHomogNeumannIUMJ} and \cite{BarlesMironescu-2013HomogOscDirichletAsympAnal}).  However, the analysis and barriers from Proposition \ref{prop:BarrierNonTrivNormalDeriv} give the desired sub/super solutions.  Thus, there is uniqueness in the class of bounded solutions of 
	\begin{align}\label{eqBarr:InfiniteDomain}
		\begin{cases}
		\Tr(A(y)D^2u(y))+B(y)\cdot\grad u(y)=0\ &\text{in}\ \Sigma^{\infty}\\
		u=u_0\ &\text{on}\ \Sigma_0,
		\end{cases}
	\end{align}
	where 
\begin{align*}
	\Sigma^\infty=\{x\in\real^{d+1}\ :\ 0<x\cdot n\}. 
\end{align*}
Thus if there are $u$ and $v$ that solve (\ref{eqBarr:InfiniteDomain}) and for a fixed constant, satisfy
	\begin{align*}
		\abs{u(y)-y_{d+1}}\leq C,\ \text{and}\ \abs{v(y)-y_{d+1}}\leq C\ \text{for all}\ y\in\Sigma^\infty,
	\end{align*}
	then $u-v$ is bounded, solves (\ref{eqBarr:InfiniteDomain}) with $u-v=0$ on $\Sigma_0$, and hence is the unique bounded solution, which is $u-v\equiv0$.

	To identify $\psi^\infty$, we note that $\psi^\ep$ does indeed have local uniform limits.  This is due to the fact that its oscillation in any ball of a fixed radius is uniformly bounded-- as seen in the proof of Proposition \ref{prop:BarrierNonTrivNormalDeriv}-- and hence enjoys a global estimate on $[\psi^\ep]_{C^\gam}$, by uniformly using the local estimate to cover $\Sigma^{1/\ep}$.  But then if $\psi^\infty$ is \emph{any} local uniform limit of $\psi^\ep$, it follows as in the proof of Proposition \ref{prop:BarrierNonTrivNormalDeriv} that 
	\begin{align*}
		\abs{\psi^\ep(y)-y_{d+1}}\leq C,
	\end{align*}
	as well as $\psi^\infty$ satisfies (\ref{eqBarr:InfiniteDomain}) by the stability of (viscosity) solutions.  Hence any possible $\psi^\infty$ will indeed satisfy
	\begin{align*}
		\abs{\psi^\infty-y_{d+1}}\leq C,
	\end{align*}
	and is thus uniquely characterized as the solution to \ref{eqBarr:InfiniteDomain} with $u_0=0$ and that is globally a bounded amount from $y_{d+1}$.
	
	Now, to conclude, we need to check that the convergence is in fact uniform in the strip $\overline{\Sigma^1}$.  We note that in $\Sigma^{1/\ep}$, $\psi^\ep-\psi^\infty$ is a bounded solution of 
	\begin{align*}
		\Tr(A(y)D^2u(y))+B(y)\cdot\grad u(y)=0\ \text{in}\ \Sigma^{1/\ep}.
	\end{align*}
	Thus by the maximim principle,  $\max_{\overline{\Sigma^{1/\ep}}}(\psi^\ep-\psi^\infty)$, occurs on the boundary of $\Sigma^{1/\ep}$.  Since $\psi^{1/\ep}-\psi^\infty=0$ on $\Sigma_0$ and  $\abs{\psi^{1/\ep}-\psi^\infty}\leq C$ on $\Sigma^{1/\ep}$, we conclude that 
	\begin{align*}
		\ep C(\phi^\ep-1/\ep)\ \text{and}\ -\ep C(\phi^\ep-1/\ep)
	\end{align*}  
	are respectively sub and super solutions of the equation and boundary data, where $\phi^\ep$ is defined in (\ref{eqBarr:EqForPhiEp}).  Hence
	\begin{align*}
		\ep C(\phi^\ep-1/\ep)\leq \psi^\ep-\psi^\infty\leq -\ep C(\phi^\ep-1/\ep)\ \text{in}\ \overline{\Sigma^{1/\ep}}.
	\end{align*}
	Since $\abs{\phi^\ep-1/\ep}\leq C$ in $\overline{\Sigma^1}$ (again, due to the arguments in the proof of Proposition \ref{prop:BarrierNonTrivNormalDeriv}), we conclude that 
	\begin{align*}
		\sup_{y\in\Sigma^1}\abs{\psi^{1/\ep}-\psi^\infty}\to 0\ \text{as}\ \ep\to0.
	\end{align*}
\end{proof}

\subsection{Uniform bound for $\norm{u^\ep}_{L^\infty}$}\label{sec:LInfinityEstimate}

We will use, in a crucial way, the $C^\gam$ and $C^{1,\gam}$ estimates for $u^\ep$, and various other related functions (see Corollary \ref{cor:uEpHolderAndGradientHolder} and Proposition \ref{prop:NemannEstimates} and \ref{prop:C1GamRegNeumannProblem}).  For these to be useful, we first need to know that $u^\ep$ are uniformly bounded.  It is interesting that it seems in most-- if not all-- of the existing literature, this bound comes from basic observations using directly the structure of the equation being homogenized.  For (\ref{eqIntro:MainEpScale}), however, this is not the case, and such an estimate will not be true without the centering condition on $B$, Assumption \ref{assume:ABCompatible}.  Thus, the estimate we need for the Neumann problem in fact utilizes the homogenization of (\ref{eqIntro:MainEpScale}) with \emph{regular Dirichlet data}. 

\begin{lem}[$L^\infty$ bounds]\label{lem:uEpLInfinity}
	There exists a universal $C>0$ such that $\displaystyle\norm{u^\ep}_{L^\infty(\Sigma^1)}\leq C\norm{g}_{L^\infty}(\Sigma_0)$, where $u^\ep$ is the unique solution of (\ref{eqIntro:MainEpScale}).
\end{lem}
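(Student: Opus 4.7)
The plan is to reduce this directly to Proposition \ref{prop:NemannEstimates} via the natural rescaling to microscopic variables. Define
\[
v^\ep(y) = \frac{1}{\ep} u^\ep(\ep y),
\]
so that $\grad v^\ep(y) = \grad u^\ep(\ep y)$ and $D^2 v^\ep(y) = \ep\, D^2 u^\ep(\ep y)$. Using the equation satisfied by $u^\ep$ at the point $x = \ep y$ and multiplying through by $\ep$ converts the singular term $\frac{1}{\ep}B(x/\ep)\cdot \grad u^\ep$ into the regular term $B(y)\cdot\grad v^\ep$. Thus $v^\ep$ solves
\[
\Tr(A(y)D^2 v^\ep(y)) + B(y)\cdot\grad v^\ep(y) = 0 \quad \text{in } \Sigma^{1/\ep}.
\]

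Next I would read off the boundary data. Since $u^\ep = 0$ on $\Sigma_1$, the image of $\Sigma_1$ under the rescaling $y = x/\ep$ is $\Sigma_{1/\ep}$, and $v^\ep = 0$ there. On $\Sigma_0$ (which is invariant under the rescaling), we have $\partial_n v^\ep(y) = \partial_n u^\ep(\ep y) = g(y)$, so $|\partial_n v^\ep| \leq \|g\|_{L^\infty(\Sigma_0)}$. Consequently $v^\ep$ satisfies the hypotheses of Proposition \ref{prop:NemannEstimates} with $c = \|g\|_{L^\infty(\Sigma_0)}$, which yields
\[
\|v^\ep\|_{L^\infty(\overline{\Sigma^{1/\ep}})} \leq C \frac{\|g\|_{L^\infty(\Sigma_0)}}{\ep}.
\]

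Undoing the rescaling gives $\|u^\ep\|_{L^\infty(\Sigma^1)} = \ep \|v^\ep\|_{L^\infty(\overline{\Sigma^{1/\ep}})} \leq C\|g\|_{L^\infty(\Sigma_0)}$, which is the desired estimate. There is no real obstacle here once Proposition \ref{prop:NemannEstimates} is in hand; all the difficulty has already been absorbed into the barrier construction of Proposition \ref{prop:BarrierNonTrivNormalDeriv}, which is precisely where the centering Assumption \ref{assume:ABCompatible} and the global rates of convergence for the regular Dirichlet problem are used. The point of this lemma is simply to package that work into a uniform $L^\infty$ bound for the original oscillatory Neumann problem.
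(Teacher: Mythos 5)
Your proposal is correct and is essentially identical to the paper's own proof: both rescale $u^\ep$ to $\frac{1}{\ep}u^\ep(\ep\cdot)$ in microscopic variables, verify the hypotheses of Proposition \ref{prop:NemannEstimates} with $c=\norm{g}_{L^\infty(\Sigma_0)}$, and undo the scaling. No further comment is needed.
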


\begin{proof}
	This statement is immediate from Proposition \ref{prop:NemannEstimates}.  If we define $\eta^\ep$ as
	\begin{align*}
		\eta^{\ep}(y):= \frac{1}{\ep}u^\ep(\ep y),
	\end{align*}
	then we see that $\eta^\ep$ satisfies (\ref{eqBarr:EqForEtaEp}).  Indeed, 
	\begin{align*}
		\abs{\partial_n \eta^\ep(y)} = \abs{\partial_n u^\ep(\ep y)} = \abs{g(y)}\leq \norm{g}_{L^\infty(\Sigma_0)}\ \text{for}\ y\in\Sigma_0,
	\end{align*}
	and
	\begin{align*}
		\abs{\eta^\ep(y)} = \abs{u^\ep(y)} = 0\ \text{for}\ y\in\Sigma_{1/\ep}.
	\end{align*}
	Also, the rescaling of $u^\ep$ in this way transforms the equation in (\ref{eqIntro:MainEpScale}) in $\Sigma^1$ into (\ref{eqBarr:EqForEtaEp}) in $\Sigma^{1/\ep}$.  Hence Proposition \ref{prop:NemannEstimates} implies that
	\begin{align*}
		\norm{u^\ep}_{L^\infty(\Sigma^1)}\leq \ep\norm{\eta^\ep}_{L^\infty(\Sigma^{1/\ep})}\leq C\norm{g}_{L^\infty(\Sigma_0)}.
	\end{align*}
\end{proof}

By Proposition \ref{prop:C1GamRegNeumannProblem}, we have the following corollary.

\begin{cor}\label{cor:uEpHolderAndGradientHolder}
	$u^\ep$ satisfies the following estimates
	\begin{align*}
		[u^\ep]_{C^\gam(\Sigma^1)}\leq C\norm{g}_{L^\infty(\Sigma_0)}
	\end{align*}
	and
	\begin{align*}
		[u^\ep]_{C^{1,\gam}(\Sigma^1)}\leq \frac{C}{\ep^{\gam}}(1+ [g]_{C^\gam(\Sigma_0)}).
	\end{align*}
\end{cor}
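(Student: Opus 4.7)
The plan is to derive both estimates by applying the Schauder-type regularity result Proposition~\ref{prop:C1GamRegNeumannProblem} to the Neumann problem after passing to microscopic variables, using Lemma~\ref{lem:uEpLInfinity} to supply the necessary $L^\infty$ input. Setting $\eta^\ep(y) = u^\ep(\ep y)/\ep$, the function $\eta^\ep$ solves the Neumann problem (\ref{eqBarr:EqForEtaEp}) in $\Sigma^{1/\ep}$ with coefficients $A(y),B(y)$ and boundary datum $g(y)$ that are all $C^\gam$ with $\ep$-independent seminorms, so Proposition~\ref{prop:C1GamRegNeumannProblem} applies to $\eta^\ep$ on unit balls with constants uniform in $\ep$. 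Lemma~\ref{lem:uEpLInfinity} gives $\|u^\ep\|_{L^\infty(\Sigma^1)} \le C\|g\|_{L^\infty}$.

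For the $C^\gam$ estimate, I would first establish the uniform oscillation bound $\osc_{B_1(y_0)\cap\overline{\Sigma^{1/\ep}}}\eta^\ep \le C\|g\|_{L^\infty}$ for every $y_0$, which amounts to saying that $\eta^\ep$ is linear in $y_{d+1}$ up to a bounded error; this follows from the barrier construction of Proposition~\ref{prop:BarrierNonTrivNormalDeriv} (rescaled and comparing $\eta^\ep$ to translates of $\phi^\ep$). Feeding this uniform oscillation bound into the Hölder portion of Proposition~\ref{prop:C1GamRegNeumannProblem} on each unit ball yields $[\eta^\ep]_{C^\gam(B_{1/2}(y_0))} \le C\|g\|_{L^\infty}$ with constant independent of $\ep$ and $y_0$. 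The scaling identity $[u^\ep]_{C^\gam(\Sigma^1)} = \ep^{1-\gam}[\eta^\ep]_{C^\gam(\Sigma^{1/\ep})}$ for macroscopic distances below $\ep$, combined with the trivial bound $|u^\ep(x_1)-u^\ep(x_2)| \le 2\|u^\ep\|_{L^\infty} \le C\|g\|_{L^\infty}$ that handles distances bounded below, then produces the desired uniform $C^\gam$ seminorm estimate on $u^\ep$.

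For the $C^{1,\gam}$ estimate, with the $C^\gam$ control on $\eta^\ep$ in hand the Schauder portion of Proposition~\ref{prop:C1GamRegNeumannProblem} applied on unit balls yields $[\eta^\ep]_{C^{1,\gam}(B_{1/2}(y_0))} \le C\bigl(1 + [g]_{C^\gam(\Sigma_0)}\bigr)$ uniformly in $y_0$ and $\ep$, since all coefficients have $\ep$-independent $C^\gam$ norms. Rescaling via $\nabla u^\ep(x) = \nabla\eta^\ep(x/\ep)$ gives
\[
[\nabla u^\ep]_{C^\gam(\Sigma^1)} = \ep^{-\gam}[\nabla\eta^\ep]_{C^\gam(\Sigma^{1/\ep})} \le \frac{C}{\ep^\gam}\bigl(1 + [g]_{C^\gam(\Sigma_0)}\bigr),
\]
which is the claimed estimate.

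The main obstacle I anticipate is the uniform oscillation bound for $\eta^\ep$ on unit balls. A priori one only knows $\|\eta^\ep\|_{L^\infty} \le C\|g\|_{L^\infty}/\ep$, which is far too weak to yield $\ep$-independent regularity; the improvement to $\osc\le C\|g\|_{L^\infty}$ on unit scale is precisely where the centering Assumption~\ref{assume:ABCompatible} on $B$ enters, through the barriers built in the previous section, and is the key non-trivial ingredient bridging Lemma~\ref{lem:uEpLInfinity} and the local regularity statement of Proposition~\ref{prop:C1GamRegNeumannProblem}.
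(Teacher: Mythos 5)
Your overall scaffolding (micro-variable rescaling, the $L^\infty$ bound of Lemma \ref{lem:uEpLInfinity}, Proposition \ref{prop:C1GamRegNeumannProblem}, and the scaling identities $[u^\ep]_{C^\gam}=\ep^{1-\gam}[\eta^\ep]_{C^\gam}$, $[\grad u^\ep]_{C^\gam}=\ep^{-\gam}[\grad\eta^\ep]_{C^\gam}$) is sensible, and the exponents you extract are the right ones. But the step you yourself flag as the crux --- the uniform oscillation bound $\osc_{B_1(y_0)\cap\overline{\Sigma^{1/\ep}}}\eta^\ep\leq C\norm{g}_{L^\infty}$ --- does not follow from the barrier construction you cite, and this is a genuine gap. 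Proposition \ref{prop:BarrierNonTrivNormalDeriv} together with Proposition \ref{prop:NemannEstimates} only sandwiches $\eta^\ep$ between $\pm(b\phi^\ep+c)$, i.e.\ it yields the two-sided pointwise bound $|\eta^\ep|\leq C\norm{g}_{L^\infty}/\ep$; two envelopes that are each of size $1/\ep$ and of slope of order one say nothing about the oscillation of the function trapped between them on a unit ball. Your reformulation ``$\eta^\ep$ is linear in $y_{d+1}$ up to a bounded error'' is, after undoing the scaling, the statement that $u^\ep$ is within $O(\ep)$ of an affine function --- that is, an $O(\ep)$ rate of convergence for the \emph{Neumann} homogenization problem. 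The rate result the paper actually has (Proposition \ref{prop:RatesHomogGlobal}, via \cite{BeLiPa-78}) is for the regular \emph{Dirichlet} problem only; it is what makes $\phi^\ep$ close to a hyperplane, but no analogous statement is available for $\eta^\ep$, and obtaining one would be at least as hard as the main theorem. Without that oscillation bound, your argument also leaves the intermediate range $\ep\lesssim|x_1-x_2|\lesssim 1$ uncontrolled, since the trivial $L^\infty$ bound only covers distances bounded below.

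The paper's route is different and avoids this issue entirely: it applies Proposition \ref{prop:C1GamRegNeumannProblem} directly at the macroscopic scale, taking as input the \emph{global} oscillation $\osc_{\Sigma^1}(u^\ep)\leq 2\norm{u^\ep}_{L^\infty}\leq C\norm{g}_{L^\infty}$ from Lemma \ref{lem:uEpLInfinity} (which is available) rather than a unit-scale oscillation bound in micro variables (which is not). The Neumann datum at the macroscopic scale is $g(\cdot/\ep)$, with $\norm{g(\cdot/\ep)}_{L^\infty}=\norm{g}_{L^\infty}$ and $[g(\cdot/\ep)]_{C^\gam}=\ep^{-\gam}[g]_{C^\gam}$, so the two displayed estimates are read off directly from the two estimates of the proposition; in particular the factor $\ep^{-\gam}$ in the $C^{1,\gam}$ bound comes solely from the H\"older seminorm of the boundary datum, not from any rescaling of the solution. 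Your instinct that a naive macroscopic application is delicate (the drift $\ep^{-1}B(\cdot/\ep)$ and the coefficient seminorm $[A(\cdot/\ep)]_{C^\gam}$ blow up) is legitimate, but the paper takes Proposition \ref{prop:C1GamRegNeumannProblem} with a universal constant $C_1(\lam,\Lam,d)$ as a black box; if you want to repair this by rescaling, you must supply the unit-scale oscillation bound by some other means than the barriers of Section \ref{sec:HabemusBarrieram}.
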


%%%%%%%%%%%%%%%%%%%%%%%%%%%%%%%%%%%%%%%%%%%%%%
%%%%%%%%%%%%%%%%%%%%%%%%%%%%%%%%%%%%%%%%%%%%%%
%%%%%%%%%%%%%%%%%%%%%%%%%%%%%%%%%%%%%%%%%%%%%%
%%%%%%%%%%%%%%%%%%%%%%%%%%%%%%%%%%%%%%%%%%%%%%
%%%%%%%%%%%%%%%%%%%%%%%%%%%%%%%%%%%%%%%%%%%%%%
\section{Structural Features of $\I^{1/\ep}$}\label{sec:StructureOfIep}

In this section, we collect several technical features of the operator, $\I^{1/\ep}$ (defined above, in (\ref{eqMainIdea:BulkDirichletAtMicroScale}), (\ref{eqMainIdea:IinMicroVars})), which will be used to prove the existence and uniqueness of the limiting constant, $\bar g$, in Section \ref{sec:HomogMainArgs}.  The reader can skip this before reading Section \ref{sec:HomogMainArgs}.  The most important result for this section is the following

\begin{lem}[Addition of constants]\label{lem:StructureAddConstants}
	There exists a function $f^\ep\in C(\Sigma_0)\intersect L^{\infty}(\Sigma_0)$ such that for all $\phi\in C^{1,\gam}(\Sigma_0)$ and constants, $c$, 
	\begin{align*}
		\I^{1/\ep}(\phi + c, y) = \I^{1/\ep}(\phi,y) - cf^\ep(y).
	\end{align*}
Furthermore, $f^\ep\geq 0$, and  for $0<c_1<c_2$ (the universal constants from Proposition \ref{prop:BarrierNonTrivNormalDeriv})
	\begin{align*}
		\ep c_1\leq f^\ep(y) \leq \ep c_2.
	\end{align*}
	
\end{lem}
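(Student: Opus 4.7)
The plan is to exploit the linearity of the operator $L u = \Tr(A(y) D^2 u) + B(y) \cdot \grad u$ together with the uniqueness of the Dirichlet problem defining $\I^{1/\ep}$, and then identify the putative function $f^\ep$ explicitly in terms of the barrier $\phi^\ep$ from Proposition \ref{prop:BarrierNonTrivNormalDeriv}.

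First I would introduce the auxiliary function $\tilde\phi^\ep$, defined as the unique solution of $L \tilde\phi^\ep = 0$ in $\Sigma^{1/\ep}$ with $\tilde\phi^\ep = 1$ on $\Sigma_0$ and $\tilde\phi^\ep = 0$ on $\Sigma_{1/\ep}$. By the linearity of $L$ and uniqueness, $\tilde\phi^\ep = \ep \phi^\ep$, where $\phi^\ep$ is the barrier from Proposition \ref{prop:BarrierNonTrivNormalDeriv}. Next, for $\phi \in C^{1,\gam}(\Sigma_0)$ and a constant $c$, the function $V^\ep_\phi + c \tilde\phi^\ep$ solves $L = 0$ in $\Sigma^{1/\ep}$, vanishes on $\Sigma_{1/\ep}$, and equals $\phi + c$ on $\Sigma_0$. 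By uniqueness of the Dirichlet problem \eqref{eqMainIdea:BulkDirichletAtMicroScale}, one concludes that
\begin{align*}
V^\ep_{\phi + c} = V^\ep_\phi + c \ep \phi^\ep.
\end{align*}

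Applying $\partial_n$ and restricting to $\Sigma_0$ then yields
\begin{align*}
\I^{1/\ep}(\phi + c, y) = \I^{1/\ep}(\phi, y) + c \ep \, \partial_n \phi^\ep(y),
\end{align*}
so the choice $f^\ep(y) := -\ep\, \partial_n \phi^\ep(y)$ gives the claimed identity. The sign and size bounds are then immediate from \eqref{eqBarr:NormalDerivEst}: the inequality $-c_2 \leq \partial_n \phi^\ep \leq -c_1 < 0$ translates into $\ep c_1 \leq f^\ep(y) \leq \ep c_2$, in particular $f^\ep \geq 0$.

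It remains to check the continuity and $L^\infty$ claims for $f^\ep$. Since $\phi^\ep$ solves a linear Dirichlet problem with constant (hence smooth) boundary data on $\Sigma_0$ and $\Sigma_{1/\ep}$, and since $A \in C^\gam$, $B \in C^\gam$, standard boundary Schauder regularity gives $\phi^\ep \in C^{1,\gam}(\overline{\Sigma^{1/\ep}})$, so $\partial_n \phi^\ep$ is continuous on $\Sigma_0$; combined with the two-sided bound just obtained, this places $f^\ep$ in $C(\Sigma_0) \cap L^\infty(\Sigma_0)$. There is no real obstacle here — every step is either linearity, uniqueness of the Dirichlet problem, or a direct appeal to Proposition \ref{prop:BarrierNonTrivNormalDeriv}; the only substantive input is the nontrivial lower bound $\partial_n \phi^\ep \leq -c_1 < 0$, which is precisely what Proposition \ref{prop:BarrierNonTrivNormalDeriv} was set up to provide and which makes the sign assertion $f^\ep \geq 0$ nontrivial.
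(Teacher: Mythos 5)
Your proposal is correct and follows essentially the same route as the paper: linearity of the bulk equation plus uniqueness of the Dirichlet problem give $V^\ep_{\phi+c}=V^\ep_\phi+c\ep\phi^\ep$, whence $f^\ep=-\ep\,\partial_n\phi^\ep$, and the two-sided bound is read off directly from Proposition \ref{prop:BarrierNonTrivNormalDeriv}. The only difference is that you spell out the continuity and $L^\infty$ claims via boundary Schauder regularity, which the paper leaves implicit.
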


\begin{rem}
	This is the counterpart to \cite[Lemma 3.6]{GuSc-2014NeumannHomogPart1DCDS-A}.  The difference is that there the operator in the bulk, $F$, and subsequently $\I^{1/\ep}$ were invariant by addition of hyperplanes.  In that case, this lemma is trivial, as the equation is invariant by the addition of hyperplanes to solutions.  Here hyperplanes are not solutions, nor can they serve as sub/super solution barriers, and so we appeal to the barrier provided by Proposition \ref{prop:BarrierNonTrivNormalDeriv}.
\end{rem}

\begin{proof}[Proof of Lemma \ref{lem:StructureAddConstants}]
  Let us call $V^\ep_\phi$ the solution of (\ref{eqMainIdea:BulkDirichletAtMicroScale}) with boundary data, $\phi$, and $\tilde V^\ep_\phi$ the solution of (\ref{eqMainIdea:BulkDirichletAtMicroScale}) with data $\phi+c$.  If $\phi^\ep$ is as in Proposition \ref{prop:BarrierNonTrivNormalDeriv}, then since (\ref{eqMainIdea:BulkDirichletAtMicroScale}) is linear, we see that 
\begin{align*}
	\tilde V^\ep_\phi = V^\ep_\phi + \ep c\phi^\ep.  
\end{align*}
Hence,
\begin{align*}
	\I^{1/\ep}(\phi+c,y) = \partial_n\tilde V^\ep_\phi(y) = \partial_n V^\ep_\phi(y) + \ep c \partial_n\phi^\ep.
\end{align*}	
The lemma follows immediately from Proposition \ref{prop:BarrierNonTrivNormalDeriv} with the choice
\begin{align}\label{eqStructIep:fepDef}
	f^\ep = -\ep\partial_n\phi^\ep.
\end{align}
\end{proof}

\begin{lem}[Rescaling]\label{lem:Rescaling}
	For all $v\in C^{1,\gam}(\Sigma_0)$,
	\begin{align*}
		\I^{1}(\ep v(\frac{\cdot}{\ep}),x) = \I^{1/\ep}(v,\frac{x}{\ep})
	\end{align*}
\end{lem}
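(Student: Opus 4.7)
The plan is a direct change-of-variables argument followed by uniqueness for the underlying Dirichlet problem. Given $v\in C^{1,\gam}(\Sigma_0)$, I would first introduce the candidate function
\[
U(x) := \ep\, V^\ep_v\!\left(\tfrac{x}{\ep}\right),\qquad x\in\overline{\Sigma^1},
\]
where $V^\ep_v$ is the solution of (\ref{eqMainIdea:BulkDirichletAtMicroScale}) built from the boundary value $v$ on $\Sigma_0$ in the microscopic variables.

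Next, I would verify that $U$ solves the Dirichlet problem (\ref{eqMainIdea:BulkDirichletAtMacroScale}) with boundary datum $\ep v(\cdot/\ep)$ on $\Sigma_0$. On $\Sigma_0$ this is immediate, since $V^\ep_v = v$ there, giving $U(x)=\ep v(x/\ep)$. On $\Sigma_1$ we note that $x\cdot n = 1$ forces $(x/\ep)\cdot n = 1/\ep$, so $x/\ep\in\Sigma_{1/\ep}$ where $V^\ep_v=0$, hence $U=0$ on $\Sigma_1$. For the PDE in $\Sigma^1$, the chain rule gives
\[
\grad U(x) = \grad V^\ep_v\!\left(\tfrac{x}{\ep}\right),\qquad D^2 U(x) = \tfrac{1}{\ep}D^2 V^\ep_v\!\left(\tfrac{x}{\ep}\right),
\]
so that
\[
\Tr\!\left(A\!\left(\tfrac{x}{\ep}\right)D^2U(x)\right) + \tfrac{1}{\ep}B\!\left(\tfrac{x}{\ep}\right)\cdot\grad U(x) = \tfrac{1}{\ep}\Bigl[\Tr(A(y)D^2V^\ep_v(y)) + B(y)\cdot\grad V^\ep_v(y)\Bigr]_{y=x/\ep} = 0.
\]
By uniqueness for (\ref{eqMainIdea:BulkDirichletAtMacroScale}) in the class $C^{1,\gam}(\overline{\Sigma^1})$, it follows that $U = W^\ep_{\ep v(\cdot/\ep)}$.

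Finally, applying $\partial_n$ to the identity $W^\ep_{\ep v(\cdot/\ep)}(x) = \ep V^\ep_v(x/\ep)$ yields
\[
\I^1(\ep v(\tfrac{\cdot}{\ep}),x) = \partial_n W^\ep_{\ep v(\cdot/\ep)}(x) = \ep\cdot\tfrac{1}{\ep}\,\partial_n V^\ep_v\!\left(\tfrac{x}{\ep}\right) = \I^{1/\ep}(v,\tfrac{x}{\ep}),
\]
which is the claim. There is no real obstacle here; the only point requiring care is making sure the rescaling prefactor $\ep$ in the definition of $U$ is chosen so that both the boundary data $\ep v(\cdot/\ep)$ and the cancellation of the singular $1/\ep$ drift in (\ref{eqMainIdea:BulkDirichletAtMacroScale}) happen simultaneously, and this is precisely what the ansatz $U=\ep V^\ep_v(\cdot/\ep)$ achieves.
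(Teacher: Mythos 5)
Your proposal is correct and follows essentially the same route as the paper's proof: define $\tilde U^\ep(x)=\ep V^\ep_v(x/\ep)$, check via the chain rule that it solves the macroscopic Dirichlet problem with data $\ep v(\cdot/\ep)$, invoke uniqueness, and differentiate in the normal direction. Your version is slightly more explicit about the boundary conditions on $\Sigma_1$ and the cancellation of the $\ep$ prefactor in $\partial_n$, but the argument is the same.
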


\begin{proof}
	We let $U^\ep_v$ solve (\ref{eqMainIdea:BulkDirichletAtMacroScale}) with data $\ep v(\cdot/\ep)$, let $V^\ep_v$ solve (\ref{eqMainIdea:BulkDirichletAtMicroScale}) with data $v$, and define 
	\begin{align*}
		\Tilde U^\ep(y) = \ep V^\ep_v(\frac{x}{\ep}).
	\end{align*}
	Hence 
	\begin{align*}
		\grad \Tilde U^\ep(x) = \grad V^\ep_v(\frac{x}{\ep}),\ \text{and}\ D^2 \Tilde U^\ep (x) = \frac{1}{\ep}D^2V^\ep_v(\frac{x}{\ep}),
	\end{align*}
	and thus $\Tilde U^\ep$ solves (\ref{eqMainIdea:BulkDirichletAtMacroScale}), with data $\ep v(\cdot/\ep)$.  By the uniqueness of solutions to (\ref{eqMainIdea:BulkDirichletAtMacroScale}), we conclude 
	\begin{align*}
		\Tilde U^\ep = U^\ep_v,
	\end{align*}
	and hence
	\begin{align*}
		\I^1(\ep v(\frac{\cdot}{\ep}),x) = \partial_n U^\ep_v(x) = \partial_n \Tilde U^\ep(x) = \partial_n V^\ep_v(\frac{x}{\ep}) = \I^{1/\ep}(v,x).
	\end{align*}
\end{proof}

    The following auxiliary functions will be useful for localizing points of maxima and minima.  Let
    \begin{equation*}
         \phi_1(x):=\frac{|x|^2}{1+|x|^2},\ \text{for}\ x\in\Sigma_0
    \end{equation*}
    and for $R>0$ we will consider the functions
    \begin{equation}\label{eqSetUp:Phi_Psi_def}
         \phi_R(x) := \phi(x/R).%,\;\;\psi_R(x) := \phi(x/R)-1.
    \end{equation}

    As we shall see below, the Dirichlet to Neumann maps for the standard extremal operators $\mathcal{M}^{\pm}$ will be of use (defined in the notation of Section \ref{sec:Notation}).  These D-to-N maps for the extremal operators are the ones that give the definition/representation of the appropriate extremal operators for the integro-differential setting of $\I^1$ and $\I^{1/\ep}$.  These integro-differential extremal operators are not essential for this paper, as we don't solve a nonlinear equation.  However, we choose to use them here for possible application in the future. They are defined as follows, given $\phi:\Sigma_0 \to \mathbb{R}$,  define,
   \begin{equation}\label{eqStructIep:NonlocalExtremalDef}
    	 M^{r,\pm}(\phi,y) : = \partial_n U_\phi^{r,\pm},
    \end{equation}
    where $U^{r,\pm}_\phi = U^{r,\pm}: \Sigma^r \to \mathbb{R}$ are the unique bounded viscosity solutions (see Proposition \ref{prop:UniqueViscSolInSigmaR}) of
	\begin{equation}\label{eqStructIep:ExtremalDirichletrScalePlus}
		\begin{cases}
			\mathcal{M}^{+}(D^2U^{r,+}) + C\abs{\grad U^{r,+}}  = 0 &\text{in}\ \Sigma^{r},\\
			U^{r,+}  = 0 &\text{on}\ \Sigma_{r},\\	
			U^{r,+}  = \phi &\text{on}\ \Sigma_0,	
		\end{cases}
	\end{equation}
and
	\begin{equation}\label{eqStructIep:ExtremalDirichletrScaleMinus}
		\begin{cases}
			\mathcal{M}^{-}(D^2U^{r,-}) - C\abs{\grad U^{r,-}}  = 0 \ &\text{in}\ \Sigma^{r},\\
			U^{r,-}  = 0\ &\text{on}\ \Sigma_{r},\\	
			U^{r,-} = \phi\ &\text{on}\ \Sigma_0.		
		\end{cases}
	\end{equation}
The operators $\M^{\pm}$ are the standard Pucci extremal operators of the second order fully nonlinear theory \cite[Chapter 2]{CaCa-95} (see also the notation in Section \ref{sec:Notation}), and they correspond to the linear ellipticity assumption that $\lam \Id\leq A(x)\leq \Lam\Id$.  We mention that by, e.g. \cite{SilvestreSirakov-2013boundary}, if $\phi\in C^{1,\gam}(\Sigma_0)$, then $U^{r,\pm}\in C^{1,\gam}(\overline{\Sigma^r})$.  Hence the definition (\ref{eqStructIep:NonlocalExtremalDef}) holds classically in the pointwise sense.

    \begin{prop}[Bump function]\label{prop:AuxiliaryFunctionsAreNice}
        
		Assume that $r\geq 1$ and $R_0>0$ are both fixed.  Then
		\begin{align*}
		\lim_{R\to\infty} \sup_{x\in B'_{R_0}} |M^{r,\pm}(\phi_R,x)| = 0	
		\end{align*}
		(we recall that $B'_{R_0}\subset\Sigma_0$).
    \end{prop}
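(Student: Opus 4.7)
The plan is a compactness / Liouville argument. As $R \to \infty$, the boundary data $\phi_R(x) = |x|^2/(R^2 + |x|^2) \le |x|^2/R^2$ tends to zero uniformly on any compact subset of $\Sigma_0$, so one expects $U^{r,\pm}_{\phi_R} \to 0$ locally uniformly in $\overline{\Sigma^r}$; combined with boundary $C^{1,\gam}$ regularity this will give the desired conclusion.

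First I would collect uniform estimates. Because constants solve the extremal equations, the comparison principle yields $0 \le U^{r,\pm}_{\phi_R} \le 1$ uniformly in $R$. The family $\{\phi_R\}_{R \ge 1}$ is uniformly bounded in $C^{1,\gam}(\Sigma_0)$, so the Silvestre--Sirakov boundary $C^{1,\gam}$ regularity gives, for each compact set $K\subset \overline{\Sigma^r}$, a uniform bound $\|U^{r,\pm}_{\phi_R}\|_{C^{1,\gam}(K)}\le C_K$. Arzel\`a--Ascoli plus a diagonal extraction then produce a subsequence $R_n\to\infty$ along which $U^{r,\pm}_{\phi_{R_n}} \to u^\pm_\infty$ locally uniformly in $\overline{\Sigma^r}$. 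Stability of viscosity solutions passes the extremal PDE to the limit; the boundary conditions pass as well, because $\phi_{R_n}\to 0$ uniformly on every compact subset of $\Sigma_0$ and $U^{r,\pm}_{\phi_R}=0$ on $\Sigma_r$ for every $R$. Thus $u^\pm_\infty$ is a bounded viscosity solution of the corresponding extremal equation on the strip $\Sigma^r$, with zero Dirichlet data on $\Sigma_0 \cup \Sigma_r$.

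The main obstacle is the strip Liouville step: every such $u^\pm_\infty$ must vanish identically. I would argue by translation. Set $M := \sup u^\pm_\infty$; if $M > 0$, pick $y_n$ with $u^\pm_\infty(y_n)\to M$. Since $u^\pm_\infty = 0$ on both flat boundaries, the normal coordinates $y_n\cdot n$ stay bounded away from $0$ and $r$. Translating tangentially, $u_n(y) := u^\pm_\infty(y + (\hat y_n,0))$ is still a bounded solution of the same equation in $\Sigma^r$ with zero boundary data, and the uniform $C^{1,\gam}$ bounds furnish (along a further subsequence) a limit $u_*$ which attains $M$ at an interior point of $\Sigma^r$. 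The strong maximum principle for the Pucci operators then forces $u_* \equiv M$, contradicting the boundary condition unless $M = 0$. The symmetric argument for $\inf u^\pm_\infty$ completes the identification $u^\pm_\infty \equiv 0$. Since every subsequential limit is zero, the full family satisfies $U^{r,\pm}_{\phi_R}\to 0$ locally uniformly in $\overline{\Sigma^r}$ as $R\to\infty$.

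To conclude, fix $y_0 \in B'_{R_0}$ and apply the Silvestre--Sirakov boundary $C^{1,\gam}$ estimate on the half-ball $B_{r/2}^+(y_0)$:
\begin{equation*}
|M^{r,\pm}(\phi_R, y_0)| = |\partial_n U^{r,\pm}_{\phi_R}(y_0)| \le C_r\bigl( \|U^{r,\pm}_{\phi_R}\|_{L^\infty(B_{r/2}^+(y_0))} + \|\phi_R\|_{C^{1,\gam}(B'_{r/2}(y_0))} \bigr).
\end{equation*}
As $y_0$ ranges over $B'_{R_0}$, both norms on the right are dominated by the corresponding norms on the fixed compact sets $B_{r/2+R_0}^+(0)$ and $B'_{r/2+R_0}(0)$, and both tend to zero as $R\to\infty$: the second by direct computation from the explicit formula for $\phi_R$ (derivatives up to order $1+\gam$ decay like $1/R^2$ on fixed compacts), the first by the locally uniform convergence $U^{r,\pm}_{\phi_R}\to 0$ proved above. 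Taking the supremum over $y_0 \in B'_{R_0}$ yields the claim. Everything here besides the strip Liouville fact is essentially off-the-shelf regularity plus a diagonal compactness argument.
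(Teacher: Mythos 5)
Your argument is correct and follows essentially the same route as the paper's proof: both establish that $U^{r,\pm}_{\phi_R}\to 0$ locally uniformly in $\overline{\Sigma^r}$ as $R\to\infty$ and then conclude by applying the Silvestre--Sirakov boundary $C^{1,\gamma}$ estimate (\ref{eqStructIep:NormalDerivEstimatesPhiR}) on half-balls covering $B'_{R_0}$, noting that the $C^{1,\gamma}$ data of $\phi_R$ on fixed compacts also vanishes in the limit. The only difference is one of detail: where the paper invokes stability of viscosity solutions together with uniqueness of the zero solution for zero Dirichlet data as known facts, you make that step explicit via a compactness extraction and a strip Liouville argument (translation plus the strong maximum principle), which is a legitimate way to justify exactly the same claim.
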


	\begin{proof}[Proof of Proposition \ref{prop:AuxiliaryFunctionsAreNice}]
		We just focus on the case of $M^{r,+}(\phi)$. The proof of this proposition is a result of the stability of viscosity solutions of (\ref{eqStructIep:ExtremalDirichletrScalePlus}) with respect to local uniform limits of the Dirichlet data, combined with the $C^{1,\gam}$ boundary regularity in \cite{SilvestreSirakov-2013boundary}.  To this end, we let $U_{\phi_R}$ denote the solution of (\ref{eqStructIep:ExtremalDirichletrScalePlus}) with data $\phi_R$.  We note that for each $t$, fixed, 
		\begin{align*}
			\lim_{R\to\infty}\norm{\phi_R}_{L^\infty(B'_t)} = 0,
		\end{align*}
and since $\hat U (x)=0$ is the unique solution to (\ref{eqStructIep:ExtremalDirichletrScalePlus}) with $\hat U|_{\Sigma_0}=0$, the stability of (\ref{eqStructIep:ExtremalDirichletrScalePlus}) implies that $U_{\phi_R}\to 0$ locally uniformly in $\Sigma^r$.  Furthermore, since $\phi_R\in C^{1,\gam}(\Sigma_0)$, \cite{SilvestreSirakov-2013boundary} shows that $U_{\phi_R}\in C^{1,\gam}(\overline{\Sigma^r})$.  Thus, \cite[Theorem 1.1]{SilvestreSirakov-2013boundary} implies
\begin{align}\label{eqStructIep:NormalDerivEstimatesPhiR}
	\norm{\partial_n U_{\phi_R}}_{L^{\infty}(B'_{R_0})}\leq C(\norm{U_{\phi_R}}_{L^\infty(B^+_{2R_0})} + \norm{\phi_R}_{L^{\infty}(B'_{2R_0})} + \norm{\grad\phi_R}_{C^{\gam}(B'_{2R_0})}).
\end{align}
We note here that $B'_{R_0}\subset\Sigma_0$ and $B^+_{R_0}\subset \overline{\Sigma^r}$ are a ball and a half ball in respectively the boundary and the closure of $\Sigma^r$.  The first term on the right of (\ref{eqStructIep:NormalDerivEstimatesPhiR}) converges to $0$ by the stability of (\ref{eqStructIep:ExtremalDirichletrScalePlus}).  The second and third terms converge to $0$ by the rescaling that defines $\phi_R$.  Thus we conclude
		\begin{align*}
		\lim_{R\to\infty} \sup_{x\in B'_{R_0}} |M^{r,+}(\phi_R,x)| = 0.	
		\end{align*}
	\end{proof}

\begin{lem}[Comparison principle for smooth functions]\label{lem:ClassicalComparisonForIOperator}
 Let $u,v : \Sigma_0 \to \mathbb{R}$ be bounded functions such that $\I^{1/\ep}(u,\cdot)$ and $\I^{1/\ep}(v,\cdot)$ are classically defined and
	\begin{equation*}
		\I^{1/\ep}(u,y) \geq \I^{1/\ep}(v,y) \;\;\forall\;y \in \Sigma_0.
	\end{equation*}
    Then,
    \begin{equation*}
         u(y) \leq v(y)\;\;\forall\;y\in\Sigma_0.
    \end{equation*}
\end{lem}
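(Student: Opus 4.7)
The plan is to argue by contradiction: assume $M:=\sup_{\Sigma_0}(u-v)>0$. The main obstacle is that $\Sigma_0$ is not compact, so this supremum need not be attained; a direct Hopf-type argument at a maximum point of $u-v$ is therefore unavailable. To circumvent this I would localize a near-maximum using the bump function $\phi_R$ from \eqref{eqSetUp:Phi_Psi_def}, tuned so the localized maximizer sits inside a ball of fixed radius.

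First I would fix $\mu>0$, set $\delta:=\mu/R^2$ for large $R$, and pick $y_0\in\Sigma_0$ with $(u-v)(y_0)>M-\delta$. Setting $\tilde v:=v+\mu\,\phi_R(\cdot-y_0)$, the elementary estimate $\phi_R(y-y_0)\geq 2/R^2$ for $|y-y_0|\geq 2$ (and $R\geq 2$) forces $(u-\tilde v)(y)<(u-\tilde v)(y_0)$ outside $\overline{B'_2(y_0)}$. Continuity of $u,v$, which follows from the hypothesis that $\I^{1/\ep}(u,\cdot)$ and $\I^{1/\ep}(v,\cdot)$ are classically defined, then yields a maximizer $y_1\in\overline{B'_2(y_0)}$ of $u-\tilde v$ with value $M^*\geq M-\delta>0$. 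By linearity of the Dirichlet problem \eqref{eqMainIdea:BulkDirichletAtMicroScale}, $V^\ep_{u-\tilde v}=V^\ep_u-V^\ep_{\tilde v}$ satisfies $L(\cdot)=0$ in $\Sigma^{1/\ep}$ with boundary data $u-\tilde v$ on $\Sigma_0$ and $0$ on $\Sigma_{1/\ep}$, and the maximum principle places its supremum $M^*$ at $y_1$.

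Next I would run a quantitative Hopf by comparison with the barrier $W:=M^*\,\ep\,\phi^\ep$ from Proposition \ref{prop:BarrierNonTrivNormalDeriv}. Both $W$ and $V^\ep_{u-\tilde v}$ solve $L(\cdot)=0$ and agree (at $0$) on $\Sigma_{1/\ep}$, while $W=M^*\geq V^\ep_{u-\tilde v}$ on $\Sigma_0$, so the maximum principle gives $V^\ep_{u-\tilde v}\leq W$ globally with equality at $y_1$. The inward normal derivative of $W-V^\ep_{u-\tilde v}\geq 0$ at $y_1$ is therefore nonnegative; combined with $\partial_n\phi^\ep\leq -c_1$ this delivers
\[
\I^{1/\ep}(u-\tilde v,\,y_1)\;\leq\;M^*\,\ep\,\partial_n\phi^\ep(y_1)\;\leq\;-(M-\delta)\,\ep\,c_1.
\]

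Finally, linearity of $\I^{1/\ep}$ in its argument gives $\I^{1/\ep}(u-\tilde v,y_1)=\I^{1/\ep}(u,y_1)-\I^{1/\ep}(v,y_1)-\mu\,\I^{1/\ep}(\phi_R(\cdot-y_0),y_1)$, and the hypothesis $\I^{1/\ep}(u,\cdot)\geq\I^{1/\ep}(v,\cdot)$ rearranges the previous bound into
\[
\mu\,\I^{1/\ep}(\phi_R(\cdot-y_0),\,y_1)\;\geq\;(M-\delta)\,\ep\,c_1.
\]
To bound the left-hand side I would observe that $V^\ep_\phi$ is a subsolution of \eqref{eqStructIep:ExtremalDirichletrScalePlus} and a supersolution of \eqref{eqStructIep:ExtremalDirichletrScaleMinus} with $C=\norm{B}_{L^\infty}$ and $r=1/\ep$, so sub/super-solution comparison of the nonlinear equations at the boundary produces the sandwich $M^{1/\ep,-}(\phi,\cdot)\leq\I^{1/\ep}(\phi,\cdot)\leq M^{1/\ep,+}(\phi,\cdot)$. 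Translation invariance of $\M^\pm$ rewrites $M^{1/\ep,\pm}(\phi_R(\cdot-y_0),y_1)$ as $M^{1/\ep,\pm}(\phi_R,y_1-y_0)$ with $y_1-y_0\in\overline{B'_2}$, whereupon Proposition \ref{prop:AuxiliaryFunctionsAreNice} (applied with $R_0=2$ and $r=1/\ep$ fixed) drives both extremal values to $0$ as $R\to\infty$. Since the right-hand side of the last display tends to $M\,\ep\,c_1>0$, this is the desired contradiction. The hardest part will be the scale-matching $\delta=\mu/R^2$: it is chosen precisely so that the almost-maximum remains in an $R$-independent ball while $\phi_R$ simultaneously shrinks fast enough on that ball for Proposition \ref{prop:AuxiliaryFunctionsAreNice} to defeat the killing rate $\ep c_1$ coming from Proposition \ref{prop:BarrierNonTrivNormalDeriv}.
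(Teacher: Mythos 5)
Your argument is correct and is exactly the proof the paper has in mind: the paper's own ``proof'' is a one-line pointer to \cite[Lemma 3.9]{GuSc-2014NeumannHomogPart1ToAppear} with Proposition \ref{prop:AuxiliaryFunctionsAreNice} as the key input, and your localization with $\phi_R$, the sandwich $M^{1/\ep,-}\leq \I^{1/\ep}\leq M^{1/\ep,+}$, and the use of $\phi^\ep$ from Proposition \ref{prop:BarrierNonTrivNormalDeriv} to produce the strict drop $-(M-\delta)\ep c_1$ (equivalently, Lemma \ref{lem:StructureAddConstants}) supply precisely the details being delegated there. The only cosmetic remark is that no delicate ``scale-matching'' is really needed at the end: with $\ep$, $\mu$, and $M$ fixed, any choice $\delta=\delta(R)\to 0$ that keeps the near-maximum in an $R$-independent ball suffices, since Proposition \ref{prop:AuxiliaryFunctionsAreNice} sends the left-hand side to $0$ while the right-hand side tends to $M\ep c_1>0$.
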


\begin{proof}[Proof of Lemma \ref{lem:ClassicalComparisonForIOperator}]
	Using Proposition \ref{prop:AuxiliaryFunctionsAreNice}, the proof of Lemma \ref{lem:ClassicalComparisonForIOperator} now follows identically to the one in \cite[Lemma 3.9]{GuSc-2014NeumannHomogPart1DCDS-A}.
	
\end{proof}

\begin{lem}[Dependence on right hand side]\label{lem:GlobalEpLevelBounds} 
	If $w\in C^{1,\gam}(\Sigma_0)$ solves
	\[
	\I^{1/\ep}(w,y) = g(y)\ \ \text{in}\ \Sigma_0,
	\]
	then
	\begin{equation*}
	-\frac{1}{c_1\ep}\norm{g}_{L^\infty} \leq w \leq \frac{1}{c_1\ep}\norm{g}_{L^\infty},
	\end{equation*}
	where $c_1$ is the constant from Lemma \ref{lem:StructureAddConstants}.
\end{lem}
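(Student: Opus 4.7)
The plan is to compare $w$ against the constant test functions $K^+:=(c_1\ep)^{-1}\norm{g}_{L^\infty}$ and $K^-:=-K^+$ by means of the comparison principle in Lemma \ref{lem:ClassicalComparisonForIOperator}.  The first step is to evaluate $\I^{1/\ep}$ on a constant.  Since the zero function solves (\ref{eqMainIdea:BulkDirichletAtMicroScale}) with zero Dirichlet data on $\Sigma_0$, we have $\I^{1/\ep}(0,\cdot)\equiv 0$; Lemma \ref{lem:StructureAddConstants} then immediately yields the key identity
\[
	\I^{1/\ep}(K,y) = -Kf^\ep(y)\quad \text{for every constant } K\in\real.
\]

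The second step is to combine this identity with the pointwise lower bound $f^\ep(y)\geq \ep c_1$ also provided by Lemma \ref{lem:StructureAddConstants}.  Since $K^+\geq 0$,
\[
	\I^{1/\ep}(K^+,y) = -K^+f^\ep(y)\leq -K^+\ep c_1 = -\norm{g}_{L^\infty}\leq g(y)=\I^{1/\ep}(w,y),
\]
so Lemma \ref{lem:ClassicalComparisonForIOperator} (taking $u=w$, $v=K^+$) delivers $w\leq K^+$ on $\Sigma_0$.  The lower bound follows by the symmetric calculation with $K^-=-K^+$: one gets $\I^{1/\ep}(K^-,y) = K^+ f^\ep(y)\geq \norm{g}_{L^\infty}\geq g(y) = \I^{1/\ep}(w,y)$, and the comparison principle (now with $u=K^-$, $v=w$) yields $K^-\leq w$.

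The entire argument thus reduces to the quantitative lower bound $f^\ep\geq \ep c_1$ already in hand.  The only point to double-check is that the hypotheses of Lemma \ref{lem:ClassicalComparisonForIOperator}—boundedness of the compared functions and a classical sense for $\I^{1/\ep}$ on both of them—are met.  The constants $K^\pm$ are harmless (their Dirichlet-to-Neumann image is a scaled normal derivative of the barrier $\phi^\ep$ from Proposition \ref{prop:BarrierNonTrivNormalDeriv}), and $w\in C^{1,\gam}(\Sigma_0)$ paired with the hypothesis $\I^{1/\ep}(w,\cdot)=g\in L^\infty$ covers the requirement on $w$.  I do not anticipate any serious obstacle beyond this routine bookkeeping, since the genuinely difficult ingredient—the nontrivial normal derivative estimate for $\phi^\ep$—has already been absorbed into Proposition \ref{prop:BarrierNonTrivNormalDeriv} and propagated to $f^\ep$ through Lemma \ref{lem:StructureAddConstants}.
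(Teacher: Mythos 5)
Your argument is correct and is essentially identical to the paper's proof: both use $\I^{1/\ep}(c,y)=-cf^\ep(y)$ (from Lemma \ref{lem:StructureAddConstants} together with $\I^{1/\ep}(0,\cdot)\equiv 0$), the lower bound $f^\ep\geq \ep c_1$, and the comparison principle of Lemma \ref{lem:ClassicalComparisonForIOperator} applied to the constants $\pm(c_1\ep)^{-1}\norm{g}_{L^\infty}$. The orientation of the comparison lemma is handled correctly in both inequalities, so there is nothing to add.
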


\begin{proof}[Proof of Lemma \ref{lem:GlobalEpLevelBounds}]
We note that since $\I^{1/\ep}(0,\cdot)=0$, it follows by Lemma \ref{lem:StructureAddConstants} that for any constant, $c$,
\begin{align*}
	\I^{1/\ep}(c,y) = -cf^\ep(y).
\end{align*}
Using $c_1$ from Lemma \ref{lem:StructureAddConstants}, we have that
\begin{align*}
	\I^{1/\ep}(w,y) = g(y)
	\leq \norm{g}_{L^\infty}
	=\frac{\ep c_1\norm{g}}{\ep c_1} 
	\leq f^\ep(y)\frac{\norm{g}}{c_1\ep}
	=\I^{1/\ep}\left(\frac{-1}{c_1\ep}\norm{g}_{L^\infty},y\right).
\end{align*}
Thus by the comparison for $\I^{1/\ep}$ (Lemma \ref{lem:ClassicalComparisonForIOperator}), we conclude that
\begin{align*}
	\frac{-\norm{g}}{c_1\ep} \leq w.
\end{align*}
The reverse inequality follows analogously.
\end{proof}

The following monotonicity property with respect to the domain, $\Sigma^{1/\ep}$, will be useful in Section \ref{sec:HomogMainArgs}. 
\begin{lem}[Monotonicity with respect to domain]\label{lem:IrComparison}
	Suppose that $(1/\ep_2)\geq (1/\ep_1)$ and that $u\geq 0$, then
	\begin{equation*}
	     \I^{1/\ep_2}(u,y)\geq \I^{1/\ep_1}(u,y)\ \ \forall\ y\in\Sigma_0.
    \end{equation*}
\end{lem}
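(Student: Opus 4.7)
The plan is to reduce the statement to a direct comparison between the harmonic extensions $V^{\ep_1}_u$ and $V^{\ep_2}_u$ on the smaller strip $\Sigma^{1/\ep_1}$, and then read off the inequality for the normal derivatives at $\Sigma_0$ from one-sided nonnegativity.

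First I would record two immediate observations about the function $V^{\ep_2}_u$ defined by (\ref{eqMainIdea:BulkDirichletAtMicroScale}) on the larger strip $\Sigma^{1/\ep_2}$. Since $u\geq 0$ and the boundary value on $\Sigma_{1/\ep_2}$ is $0$, the usual maximum principle for the operator $L$ (which applies under Assumptions \ref{assume:A}--\ref{assume:B}, via the same arguments used in Remark \ref{rem:NeumannComparison}) gives $V^{\ep_2}_u\geq 0$ throughout $\overline{\Sigma^{1/\ep_2}}$. In particular, because $\Sigma_{1/\ep_1}\subset \overline{\Sigma^{1/\ep_2}}$ whenever $1/\ep_2\geq 1/\ep_1$, we have $V^{\ep_2}_u\geq 0$ on $\Sigma_{1/\ep_1}$.

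Next I would compare $V^{\ep_2}_u$ and $V^{\ep_1}_u$ on the smaller strip $\Sigma^{1/\ep_1}$. Both functions satisfy $Lw=0$ there, they agree (and equal $u$) on $\Sigma_0$, and on $\Sigma_{1/\ep_1}$ we have $V^{\ep_2}_u\geq 0 = V^{\ep_1}_u$ by the previous step. The comparison principle on $\Sigma^{1/\ep_1}$ therefore yields
\begin{equation*}
   V^{\ep_2}_u(x) \;\geq\; V^{\ep_1}_u(x) \quad \text{for all}\ x\in \overline{\Sigma^{1/\ep_1}}.
\end{equation*}
Since $u\in C^{1,\gam}(\Sigma_0)$, both $V^{\ep_1}_u$ and $V^{\ep_2}_u$ belong to $C^{1,\gam}(\overline{\Sigma^{1/\ep_i}})$ by the regularity invoked in (\ref{eqMainIdea:IinMicroVars}), so the normal derivatives at $\Sigma_0$ are classically defined.

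Finally, recalling that $n$ points \emph{into} the strip from $\Sigma_0$ (this is the convention used throughout, e.g.\ in the statement $\partial_n\phi^\ep<0$ of Proposition \ref{prop:BarrierNonTrivNormalDeriv}), for any $y\in\Sigma_0$ and small $t>0$ we have $y+tn\in\Sigma^{1/\ep_1}$ and
\begin{equation*}
    (V^{\ep_2}_u-V^{\ep_1}_u)(y+tn)\;\geq\; 0 \;=\;(V^{\ep_2}_u-V^{\ep_1}_u)(y).
\end{equation*}
Dividing by $t$ and letting $t\downarrow 0$ gives $\partial_n V^{\ep_2}_u(y)\geq \partial_n V^{\ep_1}_u(y)$, which by the definition of $\I^{1/\ep}$ is precisely $\I^{1/\ep_2}(u,y)\geq \I^{1/\ep_1}(u,y)$.

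I expect no serious obstacle here: the only subtle point is the correct orientation of $n$ (so that the inequality for the values translates into the inequality \emph{in the same direction} for the normal derivatives), and the need to know that $V^{\ep_2}_u\geq 0$ on the intermediate slice $\Sigma_{1/\ep_1}$, which is what the hypothesis $u\geq 0$ is there for.
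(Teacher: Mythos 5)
Your proof is correct and follows essentially the same route as the paper's: nonnegativity of $V^{\ep_2}_u$ on the intermediate slice $\Sigma_{1/\ep_1}$ from the maximum principle, comparison of the two extensions on the smaller strip, and then the ordering of normal derivatives at $\Sigma_0$ where the two functions agree. Your explicit difference-quotient check of the orientation of $n$ is a detail the paper leaves implicit, but the argument is the same.
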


\begin{proof}
	Let $V^{\ep_1}_u$ and $V^{\ep_2}_u$ solve (\ref{eqMainIdea:BulkDirichletAtMicroScale}) in respectively $\Sigma^{1/\ep_1}$ and $\Sigma^{1/\ep_2}$.
	Note that $\Sigma^{1/\ep_1} \subset \Sigma^{1/\ep_2}$, so $V^{\ep_2}_u$ is defined in $\Sigma^{1/\ep_1}$. Since $V^{\ep_2}_u = u \geq 0$ on $\Sigma_0$, the comparison principle implies that $V^{\ep_2}_u\geq 0$ in $\Sigma^{1/\ep_2}$, and in particular $V^{1/\ep_2}_u \geq 0$ on $\Sigma_{1/\ep_1}$. Moreover, $V^{\ep_1}_u$ and $V^{\ep_2}_u$ agree on $\Sigma_0$ and solve the same equation in $\Sigma^{1/\ep_1}$. Thus $V^{\ep_2}_u$ is a supersolution for the equation solved by $V^{\ep_1}_u$, so that $V^{\ep_1}_u\leq V^{\ep_2}_u$ everywhere in $\Sigma^{1/\ep_1}$.
	
	Since the two functions agree on $\Sigma_0$, their normal derivatives must be ordered, namely
	\begin{equation*}
		 \partial_n V^{\ep_2}_u(y) \geq \partial_n V^{\ep_1}_u(y)\ \ \forall\ y\in \Sigma_0,
	\end{equation*}
	and the lemma follows.
	
	\end{proof}

\begin{lem}\label{lem:RHSComparison}
 Let $(1/\ep)\geq 1$ be fixed and assume $f^\ep$ is as in Lemma \ref{lem:StructureAddConstants}. Suppose that there exist bounded $w_1,w_2\in C^{1,\gam}(\Sigma_0)$, respectively sub and super solutions $w_1$ and $w_2$ to
	\begin{align*}
		\I^{1/\ep}(w_1,y) \geq -b_1+ \frac{a_1f^\ep(y)}{\ep} + g(y)\ \ \text{and}\ \ \I^{1/\ep}(w_2,y) \leq  \frac{a_2f^\ep(y)}{\ep} + g(y)\ \  \text{in}\ \Sigma_0,
	\end{align*}
	where $b_1\geq0$.  Then $\displaystyle a_1-a_2 - (1/c_1)b_1\leq \ep\sup_{\Sigma_0}|w_1-w_2|$, where $c_1$ is the constant from Lemma \ref{lem:StructureAddConstants}.
\end{lem}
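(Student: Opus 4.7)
The plan is to exploit the shift identity from Lemma \ref{lem:StructureAddConstants} together with the classical comparison principle (Lemma \ref{lem:ClassicalComparisonForIOperator}). If $a_1-a_2-b_1/c_1 \leq 0$ there is nothing to prove since the right-hand side of the claim is nonnegative, so I would assume $a_1-a_2-b_1/c_1 > 0$ and fix any constant $K$ with $0 < K \leq (a_1-a_2-b_1/c_1)/\ep$. The goal is to show that the shifted function $w_1 + K$ still lies below $w_2$ on $\Sigma_0$, which immediately forces $\sup_{\Sigma_0}(w_2-w_1) \geq K$.

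By Lemma \ref{lem:StructureAddConstants} and the subsolution hypothesis on $w_1$,
\begin{equation*}
\I^{1/\ep}(w_1+K,y) \;=\; \I^{1/\ep}(w_1,y) - K f^\ep(y) \;\geq\; -b_1 + \frac{(a_1 - K\ep)f^\ep(y)}{\ep} + g(y).
\end{equation*}
Subtracting the supersolution hypothesis on $w_2$ collapses the $g$ terms and yields
\begin{equation*}
\I^{1/\ep}(w_1+K,y) - \I^{1/\ep}(w_2,y) \;\geq\; -b_1 + \frac{(a_1 - a_2 - K\ep)f^\ep(y)}{\ep}.
\end{equation*}
The choice of $K$ gives $a_1 - a_2 - K\ep \geq b_1/c_1 \geq 0$, and the lower bound $f^\ep(y) \geq \ep c_1$ from Lemma \ref{lem:StructureAddConstants} then shows that the last displayed right-hand side is at least $-b_1 + b_1 = 0$. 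Hence $\I^{1/\ep}(w_1+K,\cdot) \geq \I^{1/\ep}(w_2,\cdot)$ pointwise on $\Sigma_0$, and since both arguments sit in $C^{1,\gam}(\Sigma_0)$ the values are classical.

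Applying the comparison principle, Lemma \ref{lem:ClassicalComparisonForIOperator}, then gives $w_1 + K \leq w_2$ on all of $\Sigma_0$, so $\sup_{\Sigma_0}(w_2 - w_1) \geq K$, and consequently $\sup_{\Sigma_0}|w_1-w_2| \geq K$. Letting $K$ approach the upper endpoint $(a_1 - a_2 - b_1/c_1)/\ep$ yields $\ep \sup_{\Sigma_0}|w_1-w_2| \geq a_1 - a_2 - b_1/c_1$, which is the desired inequality. I do not anticipate a genuine analytic obstacle here; the entire argument is driven by pointwise algebra on $\Sigma_0$, and the only mild subtlety is the careful bookkeeping that aligns the $b_1$ defect with the quantitative lower bound $f^\ep \geq \ep c_1$ so that a constant shift of size $(a_1 - a_2 - b_1/c_1)/\ep$ exactly absorbs the discrepancy.
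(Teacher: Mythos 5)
Your argument is correct and is essentially the paper's own proof: both rest on the constant-shift identity of Lemma \ref{lem:StructureAddConstants} together with the lower bound $f^\ep \geq \ep c_1$ to absorb the $-b_1$ defect, followed by the comparison principle of Lemma \ref{lem:ClassicalComparisonForIOperator}. The only cosmetic difference is that you shift $w_1$ upward by a parameter $K$ and pass to the endpoint, whereas the paper shifts $w_2$ downward by the exact amount $(1/\ep)(a_1-a_2)-(1/c_1\ep)b_1$ in one step.
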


\begin{proof}
    Without loss of generality, assume that $a_1\geq a_2$, else there is nothing to prove (since $b_1\geq0$). Let $\tilde w_2 := w_2 -(1/\ep)(a_1-a_2)+(1/c_1\ep)b_1$, then by Lemma \ref{lem:StructureAddConstants} we have
    \begin{align*}
        \I^{1/\ep}(\tilde w_2,y) & = \I^{1/\ep}(w_2,y)+\frac{(a_1-a_2)}{\ep}f^\ep(y) - \frac{b_1f^\ep(y)}{c_1\ep} ,\\
							&\leq g(y)+\frac{a_2f^\ep(y)}{\ep} + \frac{(a_1-a_2)f^\ep}{\ep} - b_1\\
                           & \leq g(y) +\frac{a_1f^\ep(y)}{\ep} - b_1,\\
                           & \leq \I^{1/\ep}(w_1,y).
    \end{align*}
    Then Lemma \ref{lem:ClassicalComparisonForIOperator} yields that $w_1\leq \tilde w_2$, i.e.  $w_1 \leq \tilde w_2 = w_2-(1/\ep)(a_1-a_2)+(1/c_1\ep)b_1$. Rearranging,
    \begin{equation*}
          (1/\ep)(a_1-a_2) - (1/c_1\ep)b_1 \leq w_2-w_1 \leq \sup \limits_{\Sigma_0}|w_1-w_2|,
    \end{equation*}
    and the lemma follows by multiplying by $\ep$.
\end{proof}

%%%%%%%%%%%%%%%%%%%%%%%%%%%%%%%%%%%%%%%%%%%%%%
%%%%%%%%%%%%%%%%%%%%%%%%%%%%%%%%%%%%%%%%%%%%%%
%%%%%%%%%%%%%%%%%%%%%%%%%%%%%%%%%%%%%%%%%%%%%%
%%%%%%%%%%%%%%%%%%%%%%%%%%%%%%%%%%%%%%%%%%%%%%
%%%%%%%%%%%%%%%%%%%%%%%%%%%%%%%%%%%%%%%%%%%%%%

\section{The Proofs of Theorems \ref{thm:LimitOfBoundaryValues} and  \ref{thm:Main}}\label{sec:HomogMainArgs}

We are now in a position to prove Theorem \ref{thm:LimitOfBoundaryValues} as well as show how it implies Theorem \ref{thm:Main}.  To this end, we define the new function
\begin{align*} 
	v^\ep=u^\ep|_{\Sigma_0}, 
\end{align*}	
where $u^\ep$ is the solution to (\ref{eqIntro:MainEpScale}).  We know by Corollary \ref{cor:uEpHolderAndGradientHolder} that $u^\ep\in C^{1,\gam}(\overline{\Sigma^1})$, and so  
\begin{align*}
	\partial_n u^\ep(x)=g(\frac{x}{\ep})\ \text{classically on}\ \Sigma_0.
\end{align*}
Furthermore, the uniqueness of solutions to (\ref{eqMainIdea:BulkDirichletAtMicroScale}) shows that when $V^\ep_{v^\ep}$ solves (\ref{eqMainIdea:BulkDirichletAtMicroScale}) with data $v^\ep$, then in fact 
\begin{align*}
	V^{\ep}_{v^\ep}\equiv u^\ep.
\end{align*}
Thus, as pointed out in Section \ref{sec:MainIdeas}, the original homogenization problem is equivalent to 
\begin{align*}
	\I^{1}(v^\ep, x) = g(\frac{x}{\ep})\ \ \text{in}\ \Sigma_0.
\end{align*}
It is useful to unscale this equation; thanks to Lemma \ref{lem:Rescaling} the function
\begin{align}\label{eqHomog:DefOfWep}
w^\ep(y)= (1/\ep)v^\ep(\ep y), 
\end{align}
solves
\begin{align}\label{eqHomog:IntDiffMicroVars}
	\I^{1/\ep}(w^\ep,y) = g(y)\ \ \text{in}\ \Sigma_0.
\end{align}
Since $\Sigma_0$ is irrational, $g|_{\Sigma_0}$ will not be periodic, but it will be almost periodic (Definition \ref{def:AlmostPeriodic}).  Furthermore, $\I^{1/\ep}$ is effectively an ``almost periodic operator'' on $\Sigma_0$, which is not precisely defined, but it manifests itself in the almost periodicity of $w^\ep$, presented below as Lemma \ref{lem:AlmostPeriodicW}.

\subsection{Almost periodicity of $w^\ep$}\label{sec:AlmostPeriodicI}

Here we use almost periodicity properties of $\Sigma_0$ with respect to $\integer^{d+1}$ to establish almost periodicity properties of $w^\ep$.  We remind the reader that the relevant definitions appear in Section \ref{sec:Assume}.

\begin{lem}\label{lem:AlmostPeriodicW}
	There exists a universal $C>0$ such that for all $\del>0$, there exists $R_\del>0$, such that if $E_\del$ is a $\del$-almost period of $g$ and $F_\del$ is a $\displaystyle C\left(\del + \frac{\del}{\ep}\right)$- almost period of $w^\ep$, then 
	\[
	B'_{R_\del}(z)\intersect \left(E_\del\intersect F_\del\right)\not=\emptyset\ \text{for all}\ z\in\Sigma_0.
	\]
	We emphasize that $R_\del$ depends only on the irrationality of $n$, and in particular is independent of $\ep$.
	Note that in this context, $B'_{R_\del}\subset\Sigma_0$.
\end{lem}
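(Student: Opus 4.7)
The approach is to find, for each $z \in \Sigma_0$, a single vector $\tau \in B'_{R_\del}(z)\cap \Sigma_0$ that simultaneously belongs to $E_\del$ and $F_\del$.  The key observation is that if $\tau$ lies close to a lattice point $\tau^* \in \integer^{d+1}$, then the $\integer^{d+1}$-periodic ingredients $A$, $B$, $g$ change only by a quantity of order $|\tau - \tau^*|^\gam$ under translation by $\tau$ rather than $\tau^*$.

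The irrationality of $n$ ensures that the projection $P:\real^{d+1}\to \Sigma_0$ maps $\integer^{d+1}$ densely into $\Sigma_0$, quantitatively: for any $\sigma>0$ there exists $R(\sigma)$ such that every $z\in\Sigma_0$ admits $\tau^*\in\integer^{d+1}$ with $P(\tau^*) \in B'_{R(\sigma)}(z)$ and $|\tau^*\cdot n| \leq \sigma$.  Fixing $\sigma = \sigma(\del)$ small enough that $\sigma^\gam (1+[g]_{C^\gam}+[A]_{C^\gam}+[B]_{C^\gam}) \leq \del$ and setting $R_\del := R(\sigma)$ and $\tau := P(\tau^*)$, the membership $\tau\in E_\del$ is immediate from $g(y+\tau^*)=g(y)$ plus H\"older continuity: $|g(y+\tau) - g(y)| \leq [g]_{C^\gam}\sigma^\gam \leq \del$.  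Importantly, $R_\del$ depends only on $\del$ and on the irrationality data of $n$, never on $\ep$.

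To show $\tau \in F_\del$, I would define $\tilde w(y) := w^\ep(y+\tau)$ and consider the lift $\tilde V(x) := V^\ep_{w^\ep}(x+\tau)$, which is a classical solution in $\Sigma^{1/\ep}$ of the PDE with translated coefficients $A(\cdot+\tau)$, $B(\cdot+\tau)$, equals $\tilde w$ on $\Sigma_0$, vanishes on $\Sigma_{1/\ep}$, and satisfies $\partial_n\tilde V(y) = g(y+\tau)$ on $\Sigma_0$ (since $\partial_n V^\ep_{w^\ep}=g$).  Using periodicity of $A,B$ and $\tau = \tau^* - (\tau^*\cdot n)n$, the translated coefficients satisfy $A(x+\tau) = A(x-(\tau^*\cdot n)n)$, hence differ from the originals by at most $C\sigma^\gam\leq C\del$ in $L^\infty$.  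Adding a perturbation of size $\sim \sigma^\gam$ of the barrier $\phi^\ep$ from Proposition \ref{prop:BarrierNonTrivNormalDeriv} to $\tilde V$ produces sub/super solutions of the \emph{original} operator $L$ with the same boundary data, yielding $|\I^{1/\ep}(\tilde w,y) - g(y+\tau)|\leq C\del$ on $\Sigma_0$.  Combined with the almost-period bound on $g$, this gives $|\I^{1/\ep}(\tilde w,y) - \I^{1/\ep}(w^\ep,y)|\leq C\del$ on $\Sigma_0$, and Lemma \ref{lem:RHSComparison} converts this into $\|\tilde w - w^\ep\|_{L^\infty(\Sigma_0)} \leq C\del/\ep$, so $\tau \in F_\del$.

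The main technical obstacle is the perturbation step just described: the strip $\Sigma^{1/\ep}$ has width $1/\ep$, so the uniform coefficient discrepancy $C\sigma^\gam$ between the shifted and original operators could in principle amplify across the strip and destroy the $\ep$-independence of the normal derivative comparison on $\Sigma_0$.  The role of the barrier $\phi^\ep$, whose normal derivative on $\Sigma_0$ is $\ep$-uniformly bounded above and below by Proposition \ref{prop:BarrierNonTrivNormalDeriv}, is precisely to absorb this coefficient error without amplification in $\ep$, in the same spirit as its use in the $L^\infty$ bound of Lemma \ref{lem:uEpLInfinity}.
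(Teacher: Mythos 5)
Your set-up (irrationality of $n$ produces, within a fixed radius $R_\del$ of any $z$, an in-plane vector $\tau$ lying within $\sigma$ of a lattice point $\tau^*$; periodicity plus H\"older continuity handles $g$) matches the paper. The genuine gap is in how you treat $w^\ep$. You translate by the \emph{in-plane} vector $\tau$, which preserves the domain $\Sigma^{1/\ep}$ but perturbs the coefficients by $C\sigma^\gam$ in $L^\infty$, and you then assert that adding a multiple $\sim\sigma^\gam$ of the barrier $\phi^\ep$ to $\tilde V$ produces sub/supersolutions of the original operator $L$. This step does not work as stated. First, $\phi^\ep$ satisfies $L\phi^\ep=0$ exactly; it is a solution, not a strict supersolution, so it cannot absorb a \emph{bulk} residual $L\tilde V=\Tr((A(y)-A(y+\tau))D^2\tilde V)+(B(y)-B(y+\tau))\cdot\grad\tilde V$. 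Second, even granting a strict supersolution with $L\zeta\leq-\sigma^\gam$ in the strip of width $1/\ep$, such a $\zeta$ necessarily has $\partial_n\zeta|_{\Sigma_0}\sim\sigma^\gam/\ep$, so after converting back to an $L^\infty$ bound you land at $\sigma^\gam/\ep^2$, i.e.\ a $C(\del+\del/\ep^2)$-almost period --- one factor of $1/\ep$ worse than the lemma claims, which would break Lemma \ref{lem:IshiiEpWEpDecay}. Third, controlling the bulk residual at all requires $\norm{D^2\tilde V}_{L^\infty}$ uniformly in $\ep$ up to $\Sigma_0$, which is not available from Corollary \ref{cor:uEpHolderAndGradientHolder} (only $C^{1,\gam}$ up to the Neumann boundary, since $g$ is merely $C^\gam$). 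Finally, Lemma \ref{lem:RHSComparison} is the wrong tool for the last step: it yields a \emph{lower} bound on $\sup|w_1-w_2|$ from an ordering of the Neumann data, not an upper bound; what you want there is Proposition \ref{prop:NemannEstimates} (or Lemma \ref{lem:GlobalEpLevelBounds} applied by linearity to the difference).

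The paper avoids all of this by translating by the full \emph{lattice} vector $\hat\tau\in\integer^{d+1}$ rather than by its projection $\tau$. Then periodicity keeps the operator \emph{exactly} the same --- there is no bulk residual whatsoever --- and the entire price is paid in the boundary data: the domain shifts by $\hat z=\hat\tau-\tau$ with $|\hat z|\leq\rho$, so on the common strip $\Sigma^{1/\ep-|\hat z|}$ the difference $\hat W^\ep=V^\ep-\hat V^\ep$ solves the homogeneous equation with $|\hat W^\ep|\leq C|\hat z|^\gam$ on the top (by the uniform $C^\gam$ estimate there) and $|\partial_n\hat W^\ep|\leq C|\hat z|^\gam$ on $\Sigma_0$ (by the uniform $C^\gam$ bound on $\partial_n\hat V^\ep$ and on $g$). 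Proposition \ref{prop:NemannEstimates} then converts these boundary errors of size $|\hat z|^\gam$ into an $L^\infty$ error of size $|\hat z|^\gam/\ep$, which is exactly the claimed $C(\del+\del/\ep)$ after choosing $\rho=\del^{1/\gam}$. I recommend you restructure your argument along these lines: keep the operator exact via the lattice shift and push the $O(\sigma^\gam)$ error entirely into the boundary conditions, where the barrier machinery gives the correct single factor of $1/\ep$.
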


\begin{proof}
	Let $z\in\Sigma_0$ be fixed and $\rho>0$ be arbitrary.  We will eventually choose $\rho$ to depend on $\del$ to make the calculation easier.  First, we note that by the irrationality of $n$, there exists an $R_\rho$ such that given any $z\in\Sigma_0$, there is $\tau(z)\in\Sigma_0\intersect B_{R_\rho}(z)$ such that $dist(\tau(z),\integer^{d+1})\leq \rho$ (see, e.g. \cite[Lem. 2.7]{ChKiLe-2012HomogNeumann}).  We take $\hat \tau\in\Sigma^{1/\ep}\intersect\integer^{d+1}$ to be any element that is within $dist(\tau(z),\integer^{d+1})$ to $\Sigma_0$.  We also note that by possibly re-adjusting the choice of $\tau(z)$, we can assume that $\tau(z)\perp(\hat\tau-\tau(z))$.  Both $\tau(z)$ and $\hat \tau(z)$ depend on $z$, but we suppress this dependence for the rest of the proof.
	
	We begin by unscaling the original equation, (\ref{eqIntro:MainEpScale}).  To that end, define
	\begin{align*}
		V^\ep(y) = \frac{1}{\ep}u^\ep(\ep y).
	\end{align*}
Thus, $V^\ep$ solves the equation in the microscopic variables, i.e.
	\begin{align*}
		\begin{cases}
			\Tr(A(y)D^2V^\ep) + B(y)\grad V^\ep = 0\ &\text{in}\ \Sigma^{1/\ep}\\
			V^\ep = 0\ &\text{on}\ \Sigma_{1/\ep}\\
			\partial_n V^\ep = g(y)\ &\text{on}\ \Sigma_0.
		\end{cases}
	\end{align*}
This means that in fact, $V^\ep|_{\Sigma_0}=w^\ep$, where $w^\ep$ is defined in (\ref{eqHomog:DefOfWep}) and solves (\ref{eqHomog:IntDiffMicroVars}).
	We will now shift $V^\ep$ by $\hat\tau$, defining  
	\begin{align*}
		\hat V^\ep(y) = V^\ep(y+\hat \tau).
	\end{align*}
We define
		\begin{align*}
			\hat z = \hat \tau - \tau,\ \text{note, by definition}\ \abs{\hat z}\leq \rho,
		\end{align*}
and because of the orthogonality of $\tau\perp (\hat\tau-\tau)$, we see that $\hat V^\ep$ solves in the shifted domain, $(\Sigma^{1/\ep}-\hat z)$,
	\begin{align*}
		\begin{cases}
		\Tr(A(y+\hat\tau)D^2\hat V^\ep) + B(y+\hat\tau)\grad \hat V^\ep=0\ &\text{in}\ (\Sigma^{1/\ep}-\hat z)\\
			\hat V^\ep = 0\ &\text{on}\ (\Sigma_{1/\ep}-\hat z)\\
			\partial_n \hat V^\ep = g(y+\hat \tau)\ &\text{on}\ (\Sigma_0 - \hat z).
		\end{cases}
	\end{align*}
Since by choice, $\hat\tau\in\integer^{d+1}$, and by the periodicity of $A$ and $g$ we conclude
	\begin{align*}
		\begin{cases}
			\Tr(A(y)D^2\hat V^\ep) + B(y)\grad \hat V^\ep=0\ &\text{in}\ (\Sigma^{1/\ep}-\hat z)\\
			\hat V^\ep = 0\ &\text{on}\ (\Sigma_{1/\ep}-\hat z)\\
			\partial_n \hat V^\ep = g(y)\ &\text{on}\ (\Sigma_0 - \hat z).
		\end{cases}
	\end{align*}

Now we can compare $V^\ep$ to $\hat V^\ep$.  Let us define
\begin{align*}
	\hat W^\ep = V^\ep - \hat V^\ep\ \text{in}\ \Sigma^{1/\ep}\intersect (\Sigma^{1/\ep}-\hat z) = \Sigma^{1/\ep-\abs{\hat z}}.
\end{align*}
We will need to observe that the oscillation of $V^\ep$ and $\hat V^\ep$ \emph{in the sets that are a distance less than one to $\Sigma_{1/\ep}$} are bounded independently of $\ep$.  This is true because we can use the barriers involving $\phi^\ep$ from Proposition \ref{prop:BarrierNonTrivNormalDeriv} plus the observation that $\phi^\ep$ stays a uniform distance from the fixed hyperplane, $1/\ep - y_{d+1}$.
Thus, from the $C^\gam$ estimates for the Dirichlet problem (near $\Sigma_{1/\ep}$) we know that $V^\ep$ and $\hat V^\ep$ are $C^\gam$ in a neighborhood of $\Sigma_{1/\ep}$, uniformly in $\ep$.  First, we check the boundary data on the $1/\ep-\abs{\hat z}$ boundary:
\begin{align*}
	\norm{V^\ep-\hat V^\ep}_{L^\infty(\Sigma_{1/\ep-\abs{\hat z}})} 
	= \norm{V^\ep-0}_{L^\infty(\Sigma_{1/\ep-\abs{\hat z}})}
	\leq C\abs{\hat z}^\gam.
\end{align*}
Next we check the normal derivative on the $\Sigma_0$ hyperplane:
\begin{align*}
	\norm{\partial_n V^\ep-\partial_n\hat V^\ep}_{L^\infty(\Sigma_0)} 
	= \norm{g - \partial_n \hat V^\ep}_{L^\infty(\Sigma_0)}
\end{align*}
 since $V^\ep = g$ on $\Sigma_0$.  We would like to transfer this inequality to an evaluation on $\Sigma_0-\hat z$ to utilize the boundary values of $\partial_n \hat V^\ep$. 
By Corollary \ref{cor:uEpHolderAndGradientHolder}, we have
\begin{align*}
	[\partial_n\hat V^\ep]_{C^\gam} = [\partial_n u^\ep(\ep\cdot)]_{C^\gam}\leq \frac{C}{\ep^\gam}\ep^\gam=C,
\end{align*}
and so for $y\in\Sigma_0$, $y-\hat z\in\Sigma_0-\hat z$
\begin{align*}
	&\abs{\partial_n\hat V^\ep(y)-g(y)} \leq \abs{\partial_n \hat V^\ep(y)-\partial_n \hat V^\ep(y-\hat z)} + \abs{\partial_n\hat V^\ep(y-\hat z)-g(y)} \\
	&= \abs{\partial_n \hat V^\ep(y)-\partial_n \hat V^\ep(y-\hat z)} + \abs{g(y-\hat z)-g(y)}\\
	&\leq C\abs{\hat z}^\gam.
\end{align*}
We used the fact that $\partial_n \hat V^\ep=g$ on $\Sigma_0-\hat z$, and so by the the H\"older regularity of $g$ and $\partial_n \hat V^\ep$, the difference on $\Sigma_0$ is of order $\abs{\hat z}^\gam$. (Note, $\partial_n\hat V^\ep=g$ on $\Sigma_{-\abs{\hat z}}$, not on $\Sigma_0$.)  Importantly, this constant, $C$, in the two estimates above, is independent of $\ep$.

Therefore,  $\hat W^\ep$ solves
\begin{align*}
	\begin{cases}
		\Tr(A(y)D^2\hat W^\ep) + B(y)\cdot\grad \hat W^\ep = 0\ &\text{in}\ \Sigma^{1/\ep-\abs{\hat z}}\\
		\abs{\hat W^\ep}\leq C\abs{\hat z}^\gam\ &\text{on}\ \Sigma_{1/\ep-\abs{\hat z}}\\
		\abs{\partial_n \hat W^\ep}\leq C\abs{\hat z}^\gam\ &\text{on}\ \Sigma_0.
	\end{cases}
\end{align*}
Thus, by Proposition \ref{prop:NemannEstimates}, we conclude, in particular, that
\begin{align*}
	\norm{\hat W^\ep}_{L^\infty(\Sigma_0)}\leq C\frac{\abs{\hat z}^\gam}{\ep}.
\end{align*}
Finally, we check, 
\begin{align*}
	\norm{w^\ep(\cdot+\tau)-w^\ep}_{L^\infty(\Sigma_0)}&=\norm{V^\ep(\cdot+\tau)- V^\ep}_{L^\infty(\Sigma_0)}\\
	&\leq \norm{V^\ep(\cdot+\hat\tau) - V^\ep(\cdot+\tau)}_{L^\infty(\Sigma_0)} + \norm{V^\ep(\cdot+\hat\tau) - V^\ep}_{L^\infty(\Sigma_0)}\\
	&\leq C\abs{\tau-\hat\tau}^\gam + \norm{\hat W^\ep}_{L^\infty(\Sigma_0)}\\
	&\leq C\abs{\hat z}^\gam + C\frac{\abs{\hat z}^\gam}{\ep}.
\end{align*}
We also confirm
\begin{align*}
	\norm{g(\cdot+\tau)-g}_{L^\infty(\Sigma_0)}&=
	\norm{g(\cdot+\hat\tau)-g(\cdot-\tau)}_{L^\infty(\Sigma_0)} + \norm{g(\cdot+\hat\tau)-g}_{L^\infty(\Sigma_0)}\\
	&\leq C\abs{\hat\tau-\tau}^\gam + 0\\
	&=C\abs{\hat z}^\gam,
\end{align*}
since by choice, $\hat\tau\in\integer^{d+1}$, and the periodicity of $g$.  Now, to conclude the proposition, we choose $\rho=\del^{1/\gam}$, and since $\abs{\hat z}\leq \rho$, we have shown that indeed $\tau\in B'_{R_\del}(z)$ is an almost period for $w^\ep$ and $g$.
\end{proof}

\begin{rem}
	We note that the proof of Proposition \ref{prop:BarrierNonTrivNormalDeriv} utilized a similar technique to that in \cite[Section 3]{ChoiKim-2014HomogNeumannJMPA}, where there it was also very important to translate the whole equation by $\hat \tau$ and appeal to the periodicity to keep the coefficients unchanged.
\end{rem}

\subsection{Limits for $\ep w^\ep - \ep w^\ep(0)$}\label{sec:NonlocalIshii}

The key lemma that establishes Theorem \ref{thm:LimitOfBoundaryValues}, in combination with the almost periodicity of Lemma \ref{lem:AlmostPeriodicW},  is a nonlocal version of the almost periodic arguments which appeared for Hamilton-Jacobi equations in Ishii's work, \cite{Ishii-2000AlmostPeriodicHJHomog}.  There are however, many differences between the Hamilton-Jacobi setting and our nonlocal setting here, and we give a slightly different argument.

\begin{lem}[Nonlocal Elliptic Version of Ishii \cite{Ishii-2000AlmostPeriodicHJHomog}]\label{lem:IshiiEpWEpDecay}
	$w^\ep$ defined in (\ref{eqHomog:DefOfWep}), (\ref{eqHomog:IntDiffMicroVars}) satisfies the decay
	\begin{equation}\label{BoundaryEq:EpWEpDecay}
		\norm{\ep w^\ep - \ep w^\ep(0)}_{L^\infty(\Sigma_0)} \to 0\ \ \text{as}\ \ \ep\to0.
	\end{equation}
\end{lem}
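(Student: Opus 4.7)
The plan is to prove the stronger statement that $\osc_{\Sigma_0}(\ep w^\ep) \to 0$ as $\ep \to 0$, which immediately yields the claim since
\begin{equation*}
	\norm{\ep w^\ep - \ep w^\ep(0)}_{L^\infty(\Sigma_0)} \leq \osc_{\Sigma_0}(\ep w^\ep).
\end{equation*}
The strategy combines two complementary pieces of information about $\ep w^\ep$: a local \emph{flattening} estimate coming from interior H\"older regularity, and the uniform \emph{global} almost periodicity encoded in Lemma \ref{lem:AlmostPeriodicW}. Lemma \ref{lem:GlobalEpLevelBounds} has already provided the uniform $L^\infty$ bound $\norm{\ep w^\ep}_{L^\infty}\leq \norm{g}_{L^\infty}/c_1$, so oscillation is the only remaining quantity to control.

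First, I would recall that $w^\ep(y) = (1/\ep)v^\ep(\ep y)$ with $v^\ep = u^\ep|_{\Sigma_0}$, so Corollary \ref{cor:uEpHolderAndGradientHolder} applied to $u^\ep$ gives
\begin{equation*}
	\abs{\ep w^\ep(y_1) - \ep w^\ep(y_2)} \;\leq\; C\ep^\gam \abs{y_1-y_2}^\gam
\end{equation*}
for all $y_1,y_2\in\Sigma_0$. This is the local flattening: on any set of fixed diameter, $\ep w^\ep$ becomes uniformly close to constant as $\ep \to 0$.

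Second, fix $\del>0$ and invoke Lemma \ref{lem:AlmostPeriodicW} to obtain a radius $R_\del$, independent of $\ep$, such that every ball $B'_{R_\del}(z)\subset\Sigma_0$ contains some $\tau\in F_\del$, the set of $C(\del+\del/\ep)$-almost periods of $w^\ep$. Multiplying through by $\ep$, any such $\tau$ satisfies
\begin{equation*}
	\norm{\ep w^\ep(\cdot+\tau) - \ep w^\ep}_{L^\infty(\Sigma_0)} \;\leq\; C\ep\del+C\del \;\leq\; 2C\del,
\end{equation*}
as soon as $\ep\leq 1$. Given any pair of points $y_1,y_2\in\Sigma_0$, I would apply this with $z = y_2-y_1$ to produce $\tau\in B'_{R_\del}(z)\cap F_\del$, so that $\abs{(y_1+\tau) - y_2}\leq R_\del$. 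Combining this with the H\"older estimate from the first step and the triangle inequality,
\begin{equation*}
	\abs{\ep w^\ep(y_2)-\ep w^\ep(y_1)} \;\leq\; \abs{\ep w^\ep(y_2) - \ep w^\ep(y_1+\tau)} + \abs{\ep w^\ep(y_1+\tau) - \ep w^\ep(y_1)} \;\leq\; C\ep^\gam R_\del^\gam + 2C\del.
\end{equation*}
Taking the supremum over $y_1,y_2\in\Sigma_0$ and sending $\ep\to 0$ for fixed $\del$ gives $\limsup_{\ep\to 0}\osc_{\Sigma_0}(\ep w^\ep) \leq 2C\del$; letting $\del\to 0$ then finishes the proof.

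The principal obstacle I anticipate is mostly book-keeping: verifying that the $\ep$-dependent bound $C(\del+\del/\ep)$ on the almost-period oscillation of $w^\ep$ scales correctly once it is multiplied by the factor of $\ep$ in front, and that the radius $R_\del$ truly depends only on the irrationality of $n$ (both of which were arranged already in Lemma \ref{lem:AlmostPeriodicW}). What is pleasant about this approach is that it avoids constructing any nonlocal corrector or effective limit equation for $\I^{1/\ep}$: the pairing of uniform H\"older decay of $\ep w^\ep$ on bounded sets with the $\ep$-uniform almost periodicity is already strong enough to force the boundary values to collapse onto a single constant in the limit.
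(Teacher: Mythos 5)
Your proposal is correct and is essentially the paper's own argument: both proofs combine the $\ep$-independent almost-period radius $R_\del$ from Lemma \ref{lem:AlmostPeriodicW} with the uniform $C^\gam$ bound on $v^\ep$ from Corollary \ref{cor:uEpHolderAndGradientHolder} (which makes $\ep w^\ep$ flat on sets of diameter $R_\del$), via the triangle inequality. The only difference is cosmetic: the paper localizes at near-maximizing points $y_k$ and compares to the value at $0$, while you bound the full oscillation over arbitrary pairs directly.
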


\begin{proof}[Proof of Lemma \ref{lem:IshiiEpWEpDecay}]
	Let $\{\varepsilon_k\}_{k}$ be a sequence such that $\varepsilon_k \to 0^+$, and let $\{ y_k\}_k $ be a sequence in $\Sigma_0$ such that for each $k$,
	\begin{align*}
	  |\varepsilon_k w^{\varepsilon_k}(y_k)-\varepsilon_k w^{\varepsilon_k}(0)|\geq \tfrac{1}{2}\| \varepsilon_k w^{\varepsilon_k}-\varepsilon_k w^{\varepsilon_k}(0)\|_{L^\infty(\Sigma_0)}.
	\end{align*}
	Let $\delta>0$ be given.   Let $F_\del$ be the set of $C(\del+\del/\ep)$- periods of $w^\ep$. Thanks to Lemma \ref{lem:AlmostPeriodicW}, there is some $R_\delta>0$ such that
	\begin{align*}  
	  \left ( z+ B_{R_\delta}\right ) \cap ( F_\del) \ne \emptyset\;\;\forall\;z\in \Sigma_0.
	\end{align*}
	We note that it is essential that $R_\del$ be strictly sublinear in $\ep$ for later purposes.  In this case, $R_\del$ is in fact independent of $\ep$, which is even better.

	Taking $\varepsilon = \varepsilon_k$, $z=y_k$ above, it follows that for each $k$ there is some $\tau_k$ which is a $C(\ep_k\del+\del)$-almost period for $\varepsilon_k w^{\varepsilon_k}$ and such that
	\begin{align}\label{BoundaryEq:Yk-TkBounded}
	  y_k-\tau_k \in B_{R_\delta}.
	\end{align}
	In particular,
	\begin{align*}
	  |\varepsilon_k w^{\varepsilon_k}(y_k)-\varepsilon_k w^{\varepsilon_k}(0)| & \leq |\varepsilon_k w^{\varepsilon}(y_k)-\varepsilon_k w^{\varepsilon_k}(y_k-\tau_k)|+|\varepsilon_k w^{\varepsilon_k}(y_k-\tau_k)-\varepsilon_k w^{\varepsilon_k}(0)|.
	\end{align*}	
	Since $\tau_k$ is a $C(\ep_k\del+\del)$-almost period for $\varepsilon_k w^{\varepsilon_k}$ the first quantity on the right is at most $C(\ep_k\delta+\del)$,  and also from (\ref{BoundaryEq:Yk-TkBounded}),
	\begin{align*}
	  |\varepsilon_k w^{\varepsilon_k}(y_k)-\varepsilon_k w^{\varepsilon_k}(0)| \leq C(\ep_k\delta+\del) + \osc \limits_{B_{R_\delta}}\{\varepsilon_k w^{\varepsilon_k}\} ,\;\;\forall\; k>0.
	\end{align*}
	Next, we note that
	\begin{align*}
	  \osc \limits_{B_{R_\delta}}\{ \varepsilon_k w^{\varepsilon_k} \} = \osc \limits_{B_{\varepsilon_k R_\delta}} \{ v^{\varepsilon_k}\}.
	\end{align*}
	Corollary \ref{cor:uEpHolderAndGradientHolder} guarantees that the functions $v^{\varepsilon}$ are $C^{\bar \gamma}$-continuous in $B_1$, uniformly in $\varepsilon$. Therefore (for each fixed $\delta>0$),
	\begin{align*}
	  \lim \limits_{\varepsilon \to 0^+}\osc \limits_{B_{\varepsilon R_\delta}} \{ v^{\varepsilon}\} = 0.	
	\end{align*}	
	(Here, it would be enough that $\lim \ep R_\del=0$ if it happened that $R_\del$ depended on $\ep$.)
    Given that $\varepsilon_k \to 0$, for every large enough $k$ (this possibly depending on $\delta$) we have
	\begin{align*}
	  \tfrac{1}{2}\| \varepsilon_k w^{\varepsilon_k}-\varepsilon_k w^{\varepsilon_k}(0)\|_{L^\infty(\Sigma_0)} \leq |\varepsilon_k w^{\varepsilon_k}(y_k)-\varepsilon_k w^{\varepsilon_k}(0)| \leq 2C\delta.   	
	\end{align*}
	That is (as the sequence $\varepsilon_k\to 0^+$ was arbitrary)
	\begin{align*}
	  \limsup \limits_{\varepsilon \to 0^+}	\| \varepsilon w^{\varepsilon}-\varepsilon w^{\varepsilon}(0)\|_{L^\infty(\Sigma_0)}\leq 4C\delta,
    \end{align*}		
	letting $\del \to 0^+$, the lemma follows.
\end{proof}

\begin{lem}[Existence of constant limits]\label{lem:IshiiExistenceOfConst}
	Given any $\ep_j\to0$, there exists a subsequence, $\ep_j'$ such that $v^{\ep'_j}\to c$ uniformly on $\Sigma_0$, for some constant $c$.
\end{lem}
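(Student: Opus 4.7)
The plan is to observe that Lemma \ref{lem:IshiiEpWEpDecay} already contains essentially everything; the only remaining task is to ``unscale'' from the microscopic variable $y$ back to the macroscopic variable $x$ on $\Sigma_0$, and then use a compactness argument on the sequence of constants $v^\ep(0)$.

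First, I would translate the decay \eqref{BoundaryEq:EpWEpDecay} into a statement about $v^\ep$ directly. By the definition $w^\ep(y) = (1/\ep)v^\ep(\ep y)$, we have $\ep w^\ep(y) = v^\ep(\ep y)$ and $\ep w^\ep(0) = v^\ep(0)$. Since $\Sigma_0$ is a linear subspace, the map $y \mapsto \ep y$ is a bijection of $\Sigma_0$ onto itself, and therefore
\begin{equation*}
\sup_{y \in \Sigma_0} |\ep w^\ep(y) - \ep w^\ep(0)| = \sup_{x \in \Sigma_0} |v^\ep(x) - v^\ep(0)|.
\end{equation*}
Lemma \ref{lem:IshiiEpWEpDecay} then gives
\begin{equation*}
\|v^\ep - v^\ep(0)\|_{L^\infty(\Sigma_0)} \longrightarrow 0 \;\;\text{as}\;\; \ep \to 0.
\end{equation*}

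Next, I would use the $L^\infty$-bound for $u^\ep$ from Lemma \ref{lem:uEpLInfinity} to note that $|v^\ep(0)| \leq \|u^\ep\|_{L^\infty(\Sigma^1)} \leq C\|g\|_{L^\infty(\Sigma_0)}$ for a universal constant $C$ independent of $\ep$. Consequently, the sequence of real numbers $\{v^{\ep_j}(0)\}_j$ is bounded, so by Bolzano--Weierstrass there exists a subsequence $\ep_j'$ and a constant $c \in \real$ with $v^{\ep_j'}(0) \to c$.

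Finally, combining the two steps, for this subsequence
\begin{equation*}
\|v^{\ep_j'} - c\|_{L^\infty(\Sigma_0)} \leq \|v^{\ep_j'} - v^{\ep_j'}(0)\|_{L^\infty(\Sigma_0)} + |v^{\ep_j'}(0) - c| \longrightarrow 0,
\end{equation*}
which is the desired conclusion. There is no real obstacle here since the hard analytic work has already been done in Lemma \ref{lem:IshiiEpWEpDecay}; this lemma is essentially a bookkeeping step that converts ``vanishing oscillation'' plus ``bounded values'' into ``uniform convergence along a subsequence.'' The only thing to be careful about is verifying that the rescaling $y \mapsto \ep y$ correctly identifies the two sup norms, which uses that $\Sigma_0$ is a linear subspace of $\real^{d+1}$ containing the origin.
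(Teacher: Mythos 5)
Your proposal is correct and follows essentially the same route as the paper: both extract a convergent subsequence of the bounded numbers $v^{\ep_j}(0)$ and combine it with the vanishing oscillation from Lemma \ref{lem:IshiiEpWEpDecay} via the triangle inequality. Your explicit verification that the rescaling $y\mapsto \ep y$ identifies the two sup norms on $\Sigma_0$ is a small but welcome extra detail that the paper leaves implicit.
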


\begin{proof}[Proof of Lemma \ref{lem:IshiiExistenceOfConst}]
	By Proposition Corollary \ref{cor:uEpHolderAndGradientHolder} we know that $v^\ep\in C^{\gam}(\Sigma_0)$ for some $0< \gam < 1$. Thus since $v^\ep$ are uniformly bounded, we can take some subsequence such that $v^{\ep_j}(0)\to c$.
	Furthermore, Lemma \ref{lem:IshiiEpWEpDecay} shows that
	\begin{equation*}
		\norm{v^{\ep_j'} - v^{\ep_j'}(0)}_{L^\infty(\Sigma_0)} \to 0\ \text{as}\ \ep\to0.
	\end{equation*}  
	Hence $v^{\ep_j'}\to c$ uniformly on $\Sigma_0$.   
\end{proof}

\subsection{Uniqueness of the limiting constant}\label{sec:UniqueLimitConstant}
\begin{lem}\label{lem:UniquenessOfConst}
	The constant, $c$, of Lemma \ref{lem:IshiiExistenceOfConst} is independent of the sequence, $\ep_j$, and hence unique.
\end{lem}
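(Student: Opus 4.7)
The plan is to argue by contradiction: assume there exist two sequences $\ep_j\to 0$ and $\tilde\ep_j\to 0$ with $v^{\ep_j}\to c_1$ and $v^{\tilde\ep_j}\to c_2$ uniformly on $\Sigma_0$, with $c_1<c_2$, and then derive $c_1=c_2$.

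The first ingredient is Lemma \ref{lem:IshiiEpWEpDecay}, which upgrades these pointwise-at-zero limits to $\|\ep_j w^{\ep_j}-c_1\|_{L^\infty(\Sigma_0)}\to 0$ and $\|\tilde\ep_j w^{\tilde\ep_j}-c_2\|_{L^\infty(\Sigma_0)}\to 0$. Hence, along each subsequence, $w^\ep$ is approximated in sup by the constant $c_i/\ep$ with error $o(1/\ep)$. The second ingredient is Lemma \ref{lem:LimitInInfiniteDomain}: since $\psi^\ep=\phi^\ep-1/\ep\to\psi^\infty$ uniformly on $\overline{\Sigma^1}$, and the bulk PDE enjoys uniform-in-$\ep$ boundary $C^{1,\gam}$ estimates, one obtains $f^\ep/\ep=-\partial_n\psi^\ep|_{\Sigma_0}\to h:=-\partial_n\psi^\infty|_{\Sigma_0}$ locally uniformly on $\Sigma_0$. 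The limit $h$ is bounded below by the positive constant $c_1$ of Lemma \ref{lem:StructureAddConstants} (via Proposition \ref{prop:BarrierNonTrivNormalDeriv}), and it inherits almost-periodicity on $\Sigma_0$ from the $\mathbb{Z}^{d+1}$-periodicity of the coefficients coupled with the irrationality of $n$, by the same translation trick used in the proof of Lemma \ref{lem:AlmostPeriodicW}.

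With these two ingredients in hand, I would apply the comparison machinery of Section \ref{sec:StructureOfIep}, specifically Lemma \ref{lem:RHSComparison}, at a single scale $\ep=\ep_j$, with pairings of the form $(w_1,w_2)=(w^{\ep_j},c/\ep_j)$ and $(w_1,w_2)=(c/\ep_j,w^{\ep_j})$ for parameters $c$. Using Lemma \ref{lem:StructureAddConstants} one computes $\I^{1/\ep_j}(c/\ep_j,y)=-c f^{\ep_j}(y)$, and optimizing the auxiliary parameters $a_1,a_2,b_1$ yields two-sided inequalities of the schematic form
\begin{equation*}
  c\,\ep_j + \inf_y\bigl\{g(y)/(f^{\ep_j}(y)/\ep_j)\bigr\} \;\le\; \|\ep_j w^{\ep_j}-c\|_{L^\infty(\Sigma_0)}
\end{equation*}
(and a dual upper bound involving $\sup_y\{g/(f^{\ep_j}/\ep_j)\}$ obtained at the cost of a nonzero $b_1$, whose role is then absorbed by the positivity $h\geq c_1>0$). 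Passing to the limit $\ep_j\to 0$, the right-hand side becomes $|c-c_1|$, and the inner quantity stabilizes to $\inf_y\{g/h\}$ (resp.\ $\sup_y\{g/h\}$). Combined with almost-periodic averaging on $\Sigma_0$ to replace pointwise $\inf$/$\sup$ by mean values, this singles out a universal compatibility relation between $c$, the mean of $g$, and the mean of $h$ (heuristically, $c\cdot\mathrm{mean}(h)=-\mathrm{mean}(g)$). The same chain of reasoning applied with $\tilde\ep_j$ and $c_2$ produces the identical compatibility relation, and since $\mathrm{mean}(h)>0$ the constant is uniquely determined. Hence $c_1=c_2$, contradicting the assumption.

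The main obstacle is the transition from pointwise $\inf$/$\sup$ estimates (which is what Lemma \ref{lem:RHSComparison} naturally produces) to genuinely averaged mean-value relations; this is where the almost periodicity of both $g|_{\Sigma_0}$ and $h$ must be deployed systematically, using $\delta$-almost periods analogously to Lemma \ref{lem:AlmostPeriodicW} and balancing the radius $R_\delta$ against $\ep$. A secondary technical point is the lifting of $C^{1,\gam}$ convergence of $\psi^\ep\to\psi^\infty$ from $\overline{\Sigma^1}$ to uniform control of $\partial_n\psi^\ep|_{\Sigma_0}$ on bounded subsets of $\Sigma_0$, so that $f^\ep/\ep$ really does converge to a well-defined $\ep$-independent limit $h$.
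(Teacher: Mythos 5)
Your setup and your two ingredients are fine as far as they go: Lemma \ref{lem:IshiiEpWEpDecay} does upgrade the convergence to uniform convergence on $\Sigma_0$, and the convergence $f^\ep/\ep\to -\partial_n\psi^\infty$ via Lemma \ref{lem:LimitInInfiniteDomain} is indeed used in the paper's argument (there it controls an error term $\rho_{j,k}$). But the core of your plan --- characterizing each subsequential limit $c$ through a compatibility identity of the form $c\cdot\mathrm{mean}(h)=-\mathrm{mean}(g)$ --- has a genuine gap that you yourself flag as ``the main obstacle'' without resolving it. Lemma \ref{lem:RHSComparison} and the comparison principle (Lemma \ref{lem:ClassicalComparisonForIOperator}) only ever produce inequalities involving $\sup$ and $\inf$ of the data; there is no mechanism in the paper (no invariant measure for the boundary operator $\I^{1/\ep}$, no ergodic theorem on $\Sigma_0$, no corrector for the induced nonlocal equation) that converts these into averaged mean-value relations. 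Indeed it is not even clear with respect to which measure the ``mean'' of $g$ and $h$ should be taken --- for a nonlocal D-to-N operator it would generally not be Lebesgue measure on $\Sigma_0$ --- and the limit as $\ep_j\to 0$ of your schematic inequality only yields one-sided bounds like $\inf_y\{g/h\}\le 0\le\sup_y\{g/h\}$, which do not pin down $c$. So the step ``combined with almost-periodic averaging \ldots this singles out a universal compatibility relation'' is precisely the missing argument, and it is not a routine deployment of $\delta$-almost periods.

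The paper's proof avoids any such identity by never comparing $w^{\ep_j}$ to a constant: it compares $w^{\ep_j}$ and $w^{\ep_k}$ \emph{directly against each other at the common scale} $1/\ep_j$. The tool that makes this possible --- and which your proposal never invokes --- is the domain monotonicity Lemma \ref{lem:IrComparison}: after normalizing $\hat w^{\ep_k}+\del_k\ge 0$, one has $\I^{1/\ep_j}(\hat w^{\ep_k}+\del_k,y)\ge\I^{1/\ep_k}(\hat w^{\ep_k}+\del_k,y)$, so the solution at the coarser scale becomes an admissible supersolution at the finer scale, up to the error $(a_2-\ep_k\del_k)\bigl(f^{\ep_j}/\ep_j-f^{\ep_k}/\ep_k\bigr)$, which tends to zero by Lemma \ref{lem:LimitInInfiniteDomain}. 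A single application of Lemma \ref{lem:RHSComparison} then gives $a_2-a_1\le o(1)$, and symmetry finishes the argument; no averaging is needed. If you want to repair your plan, replace the ``mean-value identity'' step with this direct two-scale comparison.
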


\begin{rem}
	We note that this is basically a consequence of the fact that (\ref{eqHomog:IntDiffMicroVars}) is a uniformly elliptic integro-differential equation, and follows analogously to the arguments in e.g. Evans \cite{Evan-92PerHomog} or Ishii \cite{Ishii-2000AlmostPeriodicHJHomog}.  However, the proof is not as straightforward as in the existing literature for either second order or nonlocal elliptic equations due to the influence of the Dirichlet condition $u^\ep=0$ on $\Sigma_{1}$.  The necessary modifications are not serious road blocks, but we do include them for completeness. 
\end{rem}

\begin{proof}[Proof of Lemma \ref{lem:UniquenessOfConst}]
	Let $a_1$ and $a_2$ be constants such that there are sequences $v^{\ep_j}\to a_1$ and $v^{\ep_k}\to a_2$ uniformly on $\Sigma_0$.  We will establish that
	\[
	a_2\leq a_1,
	\]  
	and since the sequences were arbitrary, this proves the lemma.
	If we rewrite $v^{\ep_j}$ and $v^{\ep_k}$ in the microscale variables, this says that (recall $w^\ep$ from (\ref{eqHomog:DefOfWep}), (\ref{eqHomog:IntDiffMicroVars}))
	\begin{equation*}
		\ep_j w^{\ep_j} \to a_1\ \ \text{and}\ \ \ep_k w^{\ep_k}\to a_2\ \ \text{uniformly on}\ \Sigma_0.
	\end{equation*}
	We will also define the functions 
	\begin{equation*}
		 \hat w^{\ep_j}= w^{\ep_j} - \frac{1}{\ep_j}a_1\ \ \text{and}\ \ \hat w^{\ep_k} = w^{\ep_k} - \frac{1}{\ep_k}a_2.
	\end{equation*}
	In anticipation of applying Lemma \ref{lem:IrComparison}, we need to make sure that $\hat w^{\ep_j}$ and $\hat w^{\ep_k}$ are non-negative.  We do so by shifting them up by respectively $\del_j$, $\del_k$ where
	\begin{equation*}
		\del_j =\norm{\hat w^{\ep_j}}\ \ \text{and}\ \ \del_k=\norm{\hat w^{\ep_k}},
	\end{equation*}
	which gives
	\begin{equation*}
		\hat w^{\ep_j}+\del_j\geq0\ \text{and}\ \hat w^{\ep_k}+\del_k\geq0.
	\end{equation*}
	
	We will assume without loss of generality that $j$ and $k$ are such that $\ep_j<\ep_k$, which suffices because $j$ and $k$ can otherwise be chosen independently of one another.  Using the equations for $w^{\ep_j}$ and $w^{\ep_k}$, we see that from Lemmas \ref{lem:StructureAddConstants} and \ref{lem:IrComparison}
	\begin{align*}
		&\frac{(-\ep_k\del_k+ a_2)f^{\ep_j}(y)}{\ep_j} + g(y)=\\
		&=(-\ep_k\del_k + a_2)\left(\frac{f^{\ep_j}(y)}{\ep_j}-\frac{f^{\ep_k}(y)}{\ep_k}\right)+\frac{(-\ep_k\del_k + a_2)f^{\ep_k}(y)}{\ep_k} + g(y)\\
		&=\I^{1/\ep_k}(w^{\ep_k}-\frac{a_2}{\ep_k}+\del_k,y) + (-\ep_k\del_k + a_2)\left(\frac{f^{\ep_j}(y)}{\ep_j}-\frac{f^{\ep_k}(y)}{\ep_k}\right) \\
		 &= \I^{1/\ep_k}(\hat w^{\ep_k}+\del_k,y) +  (-\ep_k\del_k + a_2)\left(\frac{f^{\ep_j}(y)}{\ep_j}-\frac{f^{\ep_k}(y)}{\ep_k}\right)\\
		&\leq \I^{1/\ep_j}(\hat w^{\ep_k} + \del_k,y) +  (-\ep_k\del_k + a_2)\left(\frac{f^{\ep_j}(y)}{\ep_j}-\frac{f^{\ep_k}(y)}{\ep_k}\right)\\
		&\leq  \I^{1/\ep_j}(\hat w^{\ep_k} + \del_k,y) + \rho_{k,j}, 
	\end{align*}
	where
	\begin{align*}
		\rho_{j,k}=\max\left\{0,\sup_y(-\ep_k\del_k + a_2)\left(\frac{f^{\ep_j}(y)}{\ep_j}-\frac{f^{\ep_k}(y)}{\ep_k}\right)\right\}.
	\end{align*}
	But on the other hand,
	\begin{align}
		\frac{(-\ep_j\del_j+ a_1)f^{\ep_j}(y)}{\ep_j} + g(y) &= \I^{1/\ep_j}(\hat w^{\ep_j}+\del_j,y).
	\end{align}
	Thus
	\begin{align*}
		&\I^{1/\ep_j}(\hat w^{\ep_j}+\del_j,y)\leq\frac{(-\ep_j\del_j+ a_1)f^{\ep_j}(y)}{\ep_j} + g(y)\\
		\intertext{and}
		&\I^{1/\ep_j}(\hat w^{\ep_k} + \del_k,y)\geq \frac{(-\ep_k\del_k+ a_2)f^{\ep_j}(y)}{\ep_j} + g(y) - \rho_{j,k},
	\end{align*}
	where $\rho_{j,k}\geq0$ is defined above.

	Thus Lemma \ref{lem:RHSComparison} tells us that
	\begin{align*}
		(-\ep_k\del_k + a_2) - (-\ep_j\del_j + a_1) - (1/c_1)\rho_{j,k} &\leq \ep_j \sup_{\Sigma_0}((\hat w^{\ep_k}+\del_k) - (\hat w^{\ep_j}+\del_j)).
	\end{align*}
	Hence
	\begin{align}
		a_2 - a_1 &\leq \ep_j\sup_{\Sigma_0} \hat w^{\ep_k} - \ep_j\inf_{\Sigma_0} \hat w^{\ep_j} +\ep_j\del_k - \ep_j\del_j + \ep_k\del_k-\ep_j\del_j + (1/c_1)\rho_{j,k} \nonumber\\
		 &\leq \ep_j \norm{\hat w^{\ep_k}}_{L^\infty(\Sigma_0)} + \ep_j\norm{\hat w^{\ep_j}}_{L^\infty(\Sigma_0)} + \ep_j\del_k + \ep_k\del_k +(1/c_1)\rho_{j,k}  \nonumber\\
		 &\leq 3\ep_k \norm{\hat w^{\ep_k}}_{L^\infty(\Sigma_0)} + \ep_j\norm{\hat w^{\ep_j}}_{L^\infty(\Sigma_0)} + (1/c_1)\rho_{j,k}, \label{eqBoundary:Lem4.5e1}
	\end{align}
	where we have used both $\ep_j\del_j\geq0$ and $\ep_j<\ep_k$.

	Now, we recall the definition of $f^\ep$, from the proof of Lemma \ref{lem:StructureAddConstants}, in (\ref{eqStructIep:fepDef}) as
	\begin{align*}
		f^\ep=-\ep\partial_n\phi^\ep.
	\end{align*} 
	Thanks to Lemma \ref{lem:LimitInInfiniteDomain}, we know that $\phi^\ep-1/\ep\to\psi^{\infty}$ uniformly on $\overline{\Sigma^1}$, and since $\phi^{1/\ep}-1/\ep$ also has uniform $C^{2,\gam}(\overline{\Sigma^1})$ estimates, it holds that 
	\begin{align*}
		\frac{f^\ep}{\ep}\to \partial_n \psi^\infty\ \text{uniformly on}\ \Sigma_0.
	\end{align*}
	Hence 
	\begin{align*}
		\rho_{j,k}\to0\ \ \text{as}\ j\to\infty\ \text{and}\ k\to\infty.
	\end{align*}
	
	  Now, we preserve $\ep_j<\ep_k$ and first take $j\to\infty$ followed by $k\to\infty$.  By construction of $\hat w^{\ep_j}$ and $\hat w^{\ep_k}$, we have
	\[
	\ep_k \norm{\hat w^{\ep_k}}_{L^\infty(\Sigma_0)}\to 0\ \ \text{and}\ \ \ep_j \norm{\hat w^{\ep_j}}_{L^\infty(\Sigma_0)}\to0.
	\]
	Hence $a_2\leq a_1$, and this finishes the lemma.
\end{proof}

\subsection{The Proofs of Theorems \ref{thm:LimitOfBoundaryValues} and \ref{thm:Main}}

Theorem \ref{thm:LimitOfBoundaryValues} follows immediately from Lemmas \ref{lem:IshiiExistenceOfConst} and \ref{lem:UniquenessOfConst}.  We again mention that by Corollary \ref{cor:uEpHolderAndGradientHolder}, $\norm{u^\ep}_{C^{\gam}}\leq C$ independently of $\ep$.  Thus, we can extract locally uniformly convergent subsequences of $u^\ep$ in $\Sigma^1$.  Let $\bar u$ be \emph{any} possible subsequential limit of $u^\ep$. The perturbed test function method, as in Proposition \ref{prop:PTFM}, shows that $\bar u$ is a solution of 
\begin{align*}
	\Tr(\bar A D^2\bar u) = 0\ \text{in}\ \Sigma^1,
\end{align*}
(see Section \ref{sec:Perturbed}).
Theorem \ref{thm:LimitOfBoundaryValues} gives a unique constant, $\bar c$ such that $\bar u|_{\Sigma_0}=\bar c$.  The uniform H\"older continuity of $u^\ep$ gives $\bar u|_{\Sigma_1}=0$.  Thus by the uniqueness of solutions to (\ref{eqIntro:MainEffective}) we see that there is exactly one choice for $\bar u$.  Hence Theorem \ref{thm:Main} is established with $\bar g=-\bar c$.

%%%%%%%%%%%%%%%%%%%%%%%%%%%%%%%%%%%%%%%%%%%%%%
%%%%%%%%%%%%%%%%%%%%%%%%%%%%%%%%%%%%%%%%%%%%%%
%%%%%%%%%%%%%%%%%%%%%%%%%%%%%%%%%%%%%%%%%%%%%%
%%%%%%%%%%%%%%%%%%%%%%%%%%%%%%%%%%%%%%%%%%%%%%
%%%%%%%%%%%%%%%%%%%%%%%%%%%%%%%%%%%%%%%%%%%%%%

\section{Modifications to Obtain The Cell Problem of Choi-Kim \cite{ChoiKim-2014HomogNeumannJMPA}}\label{sec:ChoiKim}

The work of Choi and Kim \cite{ChoiKim-2014HomogNeumannJMPA} proves the homogenization of fully nonlinear equations with oscillatory Neumann data in some more general domains.  They studied (\ref{eqIntro:MainEpScale}) with a nonlinear operator in the interior, as
\begin{align}
	\begin{cases}
		F(\frac{x}{\ep},D^2u^\ep)=0\ &\text{in}\ \Om\\
		u^\ep = 0\ &\text{on}\ K\\
		\partial_n u^\ep(x)=g(\frac{x}{\ep})\ &\text{on}\ \partial\Om.
	\end{cases}
\end{align} 
There are basically two main results that they establish.  The first is to obtain for each possible normal, $n$, the constant, $\bar g(n)$, from Theorem \ref{thm:Main} in, $\Sigma^1(n)$, where now it is written explicitly that $\bar g(n)$ depends on the normal direction.  This is basically a cell problem for the general domain.  The second is to study the continuity properties of $\bar g(n)$ with respect to $n$.  

With minor adaptations of our proofs above, we can also obtain this first result about the cell problem in \cite{ChoiKim-2014HomogNeumannJMPA}.  That is, we also have the following theorem (cf. \cite[Theorem 3.1]{ChoiKim-2014HomogNeumannJMPA})

\begin{thm}\label{thm:ChoiKimCellProblem}
	Assume that $F$ is $\integer^{d+1}$ periodic in $x$, uniformly elliptic, satisfies basic assumptions for existence / uniqueness of $u^\ep$, and $F(x,0)\equiv 0$.  Assume that instead of (\ref{eqIntro:MainEpScale}), $u^\ep$ now solves
	\begin{align}
		\begin{cases}
			F(\frac{x}{\ep},D^2u^\ep) = 0\ &\text{in}\ \Sigma^1\\
			u^\ep = 0\ &\text{on}\ \Sigma_1\\
			\partial_n u^\ep(x)=g(\frac{x}{\ep})\ &\text{on}\ \Sigma_0.
		\end{cases}
	\end{align}
	Then the outcome Theorem \ref{thm:Main} still holds true, where instead $\bar u$ solves $\Bar F(D^2\bar u)=0$ in $\Sigma^1$, and $\bar F$ is the same effective equation from the standard periodic homogenization theory (see \cite{Evan-92PerHomog}).
\end{thm}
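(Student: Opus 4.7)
The plan is to rerun the entire program of Sections \ref{sec:MainIdeas}--\ref{sec:HomogMainArgs} essentially verbatim, noting that every step either transfers unchanged or is actually simplified by the absence of the singular drift. One defines $\I^{1/\ep}$ to be the Dirichlet--to--Neumann map for the nonlinear bulk problem $F(y,D^2V)=0$ in $\Sigma^{1/\ep}$, $V=0$ on $\Sigma_{1/\ep}$, $V=\phi$ on $\Sigma_0$; this is well-defined and has the $C^{1,\gam}$ regularity needed for pointwise evaluation of $\partial_n V$ on $\Sigma_0$ by Silvestre--Sirakov. The strategy is to reduce $u^\ep|_{\Sigma_0}$ to solving $\I^{1/\ep}(w^\ep,y)=g(y)$ on $\Sigma_0$, and then establish via almost periodicity and an Ishii-style oscillation decay that $\ep w^\ep-\ep w^\ep(0)\to 0$, with a unique limit constant $\bar c$ independent of subsequence. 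Finally, one passes to the limit in the interior with Evans' perturbed test function method to get $\bar F(D^2\bar u)=0$ in $\Sigma^1$.

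The first major simplification is the barrier step. Since $F(y,0)\equiv 0$, the affine function $\phi^\ep(y)=1/\ep - y\cdot n$ is an \emph{exact} classical solution of the analogue of (\ref{eqBarr:EqForPhiEp}), with $\partial_n\phi^\ep\equiv -1$. Hence Proposition \ref{prop:BarrierNonTrivNormalDeriv} holds with $c_1=c_2=1$, bypassing the global homogenization rates from \cite{BeLiPa-78} that were needed in the drift case. With this in hand, Proposition \ref{prop:NemannEstimates} and Lemma \ref{lem:uEpLInfinity} go through verbatim (they only used comparison against rescalings of $\phi^\ep$), yielding the uniform bound $\|u^\ep\|_{L^\infty}\leq C\|g\|_{L^\infty}$, the $C^\gam$ bound via Krylov--Safonov, and the $C^{1,\gam}$ bound via \cite{SilvestreSirakov-2013boundary}.

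The structural results for $\I^{1/\ep}$ also transfer, and again the ``add constants'' lemma actually becomes sharper: using $F(y,D^2(V+\ell))=F(y,D^2V)$ for any affine $\ell$, one checks that the solution with boundary data $\phi+c$ equals $V^\ep_\phi + c(1-\ep y\cdot n)$, so
\begin{equation*}
\I^{1/\ep}(\phi+c,y)=\I^{1/\ep}(\phi,y)-c\ep,
\end{equation*}
i.e.\ Lemma \ref{lem:StructureAddConstants} holds with the constant $f^\ep\equiv\ep$. The comparison principle for $\I^{1/\ep}$ (Lemma \ref{lem:ClassicalComparisonForIOperator}) reduces to the comparison principle for the nonlinear bulk problem through the auxiliary Pucci-type localization functions already set up in Proposition \ref{prop:AuxiliaryFunctionsAreNice}, which was precisely written to accommodate nonlinear extensions. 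The rescaling identity and the domain-monotonicity Lemma \ref{lem:IrComparison} use only the comparison principle and so carry over.

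The almost periodicity argument (Lemma \ref{lem:AlmostPeriodicW}) goes through by the same shift-and-compare: shift by $\hat\tau\in\integer^{d+1}$, use periodicity of $F$ in $y$ to recover the same equation, and compare via Proposition \ref{prop:NemannEstimates}; the only new ingredient needed is that Corollary \ref{cor:uEpHolderAndGradientHolder} is available for the nonlinear problem, which is the case by Krylov--Safonov and Silvestre--Sirakov. The Ishii decay lemma is purely a consequence of uniform interior $C^\gam$ regularity together with almost periodicity, so it transfers verbatim, and the uniqueness argument Lemma \ref{lem:UniquenessOfConst} is in fact \emph{easier}: because $f^\ep\equiv\ep$, the correction term $\rho_{j,k}$ vanishes identically and the reference to Lemma \ref{lem:LimitInInfiniteDomain} is not needed. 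The final passage in the interior then uses Evans' perturbed test function method \cite{Evan-92PerHomog} to identify every subsequential limit $\bar u$ as the solution of $\bar F(D^2\bar u)=0$ in $\Sigma^1$, with $\bar u|_{\Sigma_1}=0$ and $\bar u|_{\Sigma_0}=\bar c$, which determines $\bar g=-\bar c$. The only genuinely delicate verification is that $\I^{1/\ep}$ satisfies the classical comparison principle in the form used throughout Section \ref{sec:StructureOfIep}, but this is exactly why the paper chose to phrase Proposition \ref{prop:AuxiliaryFunctionsAreNice} and Lemma \ref{lem:ClassicalComparisonForIOperator} in terms of the Pucci extremals from the outset.
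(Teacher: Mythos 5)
Your proposal follows the paper's own route almost exactly: the affine barrier $\phi^\ep(y)=1/\ep-y_{d+1}$ replacing Proposition \ref{prop:BarrierNonTrivNormalDeriv}, the resulting $f^\ep\equiv\ep$ in Lemma \ref{lem:StructureAddConstants}, and the observation that Sections \ref{sec:StructureOfIep} and \ref{sec:HomogMainArgs} never use linearity are precisely the modifications the paper makes, and your extra remark that $\rho_{j,k}\equiv 0$ so that Lemma \ref{lem:LimitInInfiniteDomain} becomes unnecessary is a correct (and nice) simplification.

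There is, however, one step where your justification does not work as written, and it is exactly the second of the two modifications the paper singles out. In the proof of Lemma \ref{lem:AlmostPeriodicW} you say the shifted and unshifted solutions are ``compare[d] via Proposition \ref{prop:NemannEstimates}.'' That proposition is an estimate for functions $\eta^\ep$ that \emph{solve the equation} in $\Sigma^{1/\ep}$, and in the linear case it applies to $\hat W^\ep=V^\ep-\hat V^\ep$ because the difference of two solutions of $L u=0$ is again a solution. For a nonlinear $F$ this fails: $\hat W^\ep$ solves no equation of the form $F(y,D^2\cdot)=0$; it only satisfies the extremal inequalities $\M^-(D^2\hat W^\ep)\leq 0\leq \M^+(D^2\hat W^\ep)$ in the viscosity sense. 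One therefore needs a Neumann estimate for Pucci sub/supersolutions in $\Sigma^{1/\ep-\abs{\hat z}}$ with $\abs{\hat W^\ep}\leq C\abs{\hat z}^\gam$ on the top and $\abs{\partial_n\hat W^\ep}\leq C\abs{\hat z}^\gam$ on $\Sigma_0$; this is supplied by the explicit affine barriers $\pm C\del(1/\ep-y_{d+1})\pm C\del$ (which are exact solutions of the extremal equations since Pucci operators vanish on affine functions), yielding $\abs{\hat W^\ep}\leq C\del/\ep$. The fix is one line and sits squarely inside the Pucci framework you already invoke elsewhere, but as stated your appeal to Proposition \ref{prop:NemannEstimates} is not legitimate, and the claim that the ``only new ingredient'' in Lemma \ref{lem:AlmostPeriodicW} is the regularity of Corollary \ref{cor:uEpHolderAndGradientHolder} understates where the nonlinearity actually intervenes.
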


There are only two statements / techniques in our proof of Theorem \ref{thm:Main} that need to be slightly modified to obtain Theorem \ref{thm:ChoiKimCellProblem}: the statement of Lemma \ref{lem:StructureAddConstants} and the proof of Lemma \ref{lem:AlmostPeriodicW}.  Due to the fact that $F(x,0)\equiv 0$, any affine function is a solution of $F(y,D^2u)=0$.  Thus, Proposition \ref{prop:BarrierNonTrivNormalDeriv} is in fact trivial in this case.  The function $\phi^\ep$, from Proposition \ref{prop:BarrierNonTrivNormalDeriv}, is simply
\begin{align*}
	\phi^\ep(y) = 1/\ep - y_{d+1}.
\end{align*}
Thus, for this case, Lemma \ref{lem:StructureAddConstants} is true with (see (\ref{eqStructIep:fepDef}))
\begin{align*}
	f^\ep(y) = \ep.
\end{align*}
Finally, in the proof of Lemma \ref{lem:AlmostPeriodicW}, we see that the function, $\hat W^\ep$, solves in the viscosity sense:
\begin{align*}
	\begin{cases}
		\M^-(D^2\hat W^\ep) \leq 0\ &\text{in}\ \Sigma^{1/\ep-\abs{\hat z}}\\
		\M^+(D^2\hat W^\ep) \geq 0\ &\text{in}\ \Sigma^{1/\ep-\abs{\hat z}}\\
		\abs{\hat W^\ep}\leq C\abs{\hat z}^\gam\ &\text{on}\ \Sigma_{1/\ep-\abs{\hat z}}\\
		\abs{\partial_n \hat W^\ep}\leq C\abs{\hat z}^\gam\ &\text{on}\ \Sigma_0.
	\end{cases}
\end{align*}
Here $\M^\pm$ are the Pucci extremal operators of fully nonlinear elliptic equations, see \cite[Chp. 2]{CaCa-95}.
We recall also that $\hat z$ was chosen so that $\abs{\hat z}^{\gam}\leq\del$.  Thus the functions 
\begin{align*}
	\eta^\ep_{super}(y) = C\del(1/\ep - y_{d+1}) + C\del\\ 
	\intertext{and} 
	\eta^\ep_{sub}(y) =  -C\del(1/\ep - y_{d+1}) - C\del
\end{align*}
are respectively upper and lower barriers for $\hat W^\ep$.  Thus it follows that 
\begin{align*}
	\abs{\hat W^\ep}\leq C\frac{\del}{\ep}.
\end{align*}
The rest of the proofs now follow in the same fashion as for the linear case proved in Section \ref{sec:HomogMainArgs}.  We note that nowhere else in Sections \ref{sec:StructureOfIep} and \ref{sec:HomogMainArgs} was linearity used.  In fact, the rest of the details are very similar to those in \cite{GuSc-2014NeumannHomogPart1DCDS-A}.

%%%%%%%%%%%%%%%%%%%%%%%%%%%%%%%%%%%%%%%%%%%%%%
%%%%%%%%%%%%%%%%%%%%%%%%%%%%%%%%%%%%%%%%%%%%%%
%%%%%%%%%%%%%%%%%%%%%%%%%%%%%%%%%%%%%%%%%%%%%%
%%%%%%%%%%%%%%%%%%%%%%%%%%%%%%%%%%%%%%%%%%%%%%
%%%%%%%%%%%%%%%%%%%%%%%%%%%%%%%%%%%%%%%%%%%%%%

\section{Two Natural Questions}

In the proof of Lemma \ref{lem:AlmostPeriodicW}, it was very convenient that $A$ and $B$ were periodic so that the translation by $\hat \tau$ kept the equations for $V^\ep$ and $\hat V^\ep$ the same.  However, it seems that this should only be a convenience, and that in fact one could establish Lemma \ref{lem:AlmostPeriodicW} when $A$, $B$, and $g$ are only almost periodic functions.  Is this true?

When the non-divergence equation in (\ref{eqIntro:MainEpScale}) is replaced by the operator, $\Div(A(x/\ep)\grad u^\ep)=0$, the equation still has a comparison principle, even with bounded, measurable $A$ (see \cite[Chp. 8]{GiTr-98} or \cite{KiSt-1980BookIntroVarIneq}).  Thus the idea of using the D-to-N mapping is still plausible, but the main drawback could be the $C^{1,\gam}$ regularity (i.e. Proposition \ref{prop:C1GamRegNeumannProblem} and Corollary \ref{cor:uEpHolderAndGradientHolder}).  Assume that $A$ is only $C^\gam$; can the integro-differential method be utilized to cover the case of the oscillatory oblique Neumann condition?

%%%%%%%%%%%%%%%%%%%%%%%%%%%%%%%%%%%%%%%%%%%%%%
%%%%%%%%%%%%%%%%%%%%%%%%%%%%%%%%%%%%%%%%%%%%%%
%%%%%%%%%%%%%%%%%%%%%%%%%%%%%%%%%%%%%%%%%%%%%%
%%%%%%%%%%%%%%%%%%%%%%%%%%%%%%%%%%%%%%%%%%%%%%
%%%%%%%%%%%%%%%%%%%%%%%%%%%%%%%%%%%%%%%%%%%%%%

\appendix

%%%%%%%%%%%%%%%%%%%%%%%%%%%%%%%%%%%%%%%%%%%%%%
%%%%%%%%%%%%%%%%%%%%%%%%%%%%%%%%%%%%%%%%%%%%%%
%%%%%%%%%%%%%%%%%%%%%%%%%%%%%%%%%%%%%%%%%%%%%%
%%%%%%%%%%%%%%%%%%%%%%%%%%%%%%%%%%%%%%%%%%%%%%
%%%%%%%%%%%%%%%%%%%%%%%%%%%%%%%%%%%%%%%%%%%%%%
\section{Various Useful Facts and Extra Details of Auxiliary Results}

Here we collect some useful facts, expand upon the invariant measure, $m$, show a result about rates, and give the details of the perturbed test function method as it pertains to the \emph{interior} homogenization of (\ref{eqIntro:MainEpScale}).

\subsection{Useful Facts}
In this Appendix, we collect many facts, some of which are known, but we are unaware of particular references for their presentation.  Other results that appear in the appendix are ones that are known under slightly different assumptions than those given in Assumptions \ref{assume:A}--\ref{assume:ABCompatible}, and so we present them here for completeness.  The first is a theorem about uniqueness for various equations in possibly unbounded domains and it is rephrasing of the results of \cite[Sec. 7]{Ishii-1989ExistenceUniquenessCPAM} to the case of bounded solutions and uniformly elliptic equations (see also \cite[Sec. V.1]{IshiiLions-1990ViscositySolutions2ndOrder} for comments about strictly elliptic equations).

\begin{prop}\label{prop:UniqueViscSolInSigmaR}
	For all $r>0$, and fixed, equations (\ref{eqStructIep:ExtremalDirichletrScalePlus}) and (\ref{eqStructIep:ExtremalDirichletrScaleMinus}) admit a unique bounded viscosity solution.
\end{prop}

\begin{proof}[Sketch of Proposition \ref{prop:UniqueViscSolInSigmaR}]
	The main point of this result is to show a comparison principle between bounded viscosity sub and super solutions of e.g. (\ref{eqStructIep:ExtremalDirichletrScalePlus}).  The existence follows from Perron's method in Ishii \cite{Ishii-1987PerronMethodDUKE}.  The existence is a simplification of Ishii \cite[Thm. 7.1]{Ishii-1989ExistenceUniquenessCPAM} to the case of bounded solutions and uniformly elliptic equations.  First, we note that since (\ref{eqStructIep:ExtremalDirichletrScalePlus}) (and also (\ref{eqStructIep:ExtremalDirichletrScaleMinus})) are uniformly elliptic, we can always perturb sub/super solutions by a $C^{2,\al}$ function in $\Sigma^r$ so that the sub/super solution inequality is strict.  Second, the comparison between sub and super solutions when at least one of them is a \emph{strict} sub/super solution is an easier case, and follows from a simple modification of the proof of \cite[Thm. 7.1]{Ishii-1989ExistenceUniquenessCPAM}.  We can follow the proof exactly, and we note that in the stage of subtracting the function $g_R$ to force a max of $u^\ep-v_\ep-\psi(x,y)$ (keeping with the notation of \cite[Thm. 7.1]{Ishii-1989ExistenceUniquenessCPAM}), thanks to the boundedness of $u^\ep-v_\ep$, it suffices to subtract a small multiple, $\del g_R$, instead of $g_R$, and a maximum will still be achieved.  Then, after the application of \cite[Prop. 5.1]{Ishii-1989ExistenceUniquenessCPAM}, the desired contradiction is reached thanks to the assumption that the sub/super solutions were strict and from the fact that the gradient of $\del g_R$ can be made as small as necessary, for appropriately small $\del$.
\end{proof}

The following can be adapted from \cite{CrIsLi-92}, combined with some details in \cite[Appendix A]{GuSc-2014NeumannHomogPart1DCDS-A}
\begin{prop}[Comparison for the Neumann problem]\label{prop:NeumannComparison}
	Assume that $\M^\pm$ are the Pucci operators given in Section \ref{sec:Notation}; and that $u$ (``subsolution'') and $v$ (``supersolution'') solve the following in the viscosity sense:
	\begin{align*}
		\M^+(D^2u) + C\abs{\grad u}\geq F(D^2u, \grad u, x)\geq 0\ \text{in}\ \Sigma^1,\ \partial_n u=f\ \text{on}\ \Sigma_0,\ \text{and}\ u=u_0\ \text{on}\ \Sigma_1
	\end{align*}
	\begin{align*}
		\M^-(D^2v) - C\abs{\grad v}\leq F(D^2v, \grad v, x)\leq 0\ \text{in}\ \Sigma^1,\ \partial_n v=g\ \text{on}\ \Sigma_0,\ \text{and}\ v=v_0\ \text{on}\ \Sigma_1.
	\end{align*}
	If $f-g\geq 0$ on $\Sigma_0$ and $u_0\leq v_0$ on $\Sigma_1$, then $u\leq v$ in $\overline{\Sigma^1}$.  
\end{prop}

The next result is simply a paraphrasing of those that appear in \cite[Sec. 2,3,4]{LiebermanTrudinger-1986NonlinearObliqueBCTAMS}, and standard modifications of the arguments in \cite{LiebermanTrudinger-1986NonlinearObliqueBCTAMS} yield the result as stated here.  We note that for our purposes, there is no harm in using a H\"older exponent for $g$ that is lower than the one obtained in the Krylov-Safonov theorem.  Thus, in the result below, we may assume it is the same $\gam$ appearing for the H\"older norms for $g$, $v$, $\grad v$.
\begin{prop}[Lieberman-Trudinger \cite{LiebermanTrudinger-1986NonlinearObliqueBCTAMS}]\label{prop:C1GamRegNeumannProblem}
	Assume that $\lam \Id\leq A \leq\Lam\Id$, $A$, $B$, and $g$ are all bounded and $C^\gam$ continuous, and that $v$ solves
	\begin{align*}
	\begin{cases}
		\Tr(A(y)D^2 v) + B(y)\cdot\grad v =0\ &\text{in}\ \Sigma^1\\
		\partial_n v = g\ &\text{on}\ \Sigma_0.
	\end{cases}
	\end{align*}
	There exists a universal constant, $C_1(\lam,\Lam,d)$ such that
	\begin{align*}
		[v]_{C^{\gam}(\overline{\Sigma^{1/2}})} \leq C_1\left(\osc_{\Sigma^1}(v) + \norm{g}_{L^\infty(\Sigma_0)}\right)
	\end{align*}
	and
	\begin{align*}
		[v]_{C^{1,\gam}(\overline{\Sigma^{1/2}})}\leq C_1\left(
		\osc_{\Sigma^1}(v) + \norm{g}_{C^{\gam}(\Sigma_0)} 
		\right).
	\end{align*}
\end{prop}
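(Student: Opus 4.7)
The plan is to establish the two estimates separately and locally. Since $\Sigma_0$ is already flat and $\Sigma_1$ is far from $\overline{\Sigma^{1/2}}$, I may fix an arbitrary point $y_0 \in \Sigma_0 \cap \overline{\Sigma^{1/2}}$ and work in a half-ball $B_1^+(y_0)$; the remaining part of the estimate in $\overline{\Sigma^{1/2}}$ will follow from interior Krylov--Safonov and Schauder theory once the boundary contribution is controlled. The $C^\gam$ estimate will come from a boundary weak Harnack inequality followed by iteration, and the $C^{1,\gam}$ estimate from a perturbation-and-iteration scheme of the kind used to prove pointwise $C^{1,\gam}$ regularity for fully nonlinear equations.

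For the first estimate, I would derive a boundary growth lemma: for any solution of $L v = 0$ in $B_1^+$ with $\partial_n v = g$ on $B_1'$, one has $\osc_{B_{r/2}^+(y_0)} v \leq \theta \osc_{B_r^+(y_0)} v + C r \|g\|_{L^\infty(\Sigma_0)}$ for some universal $\theta \in (0,1)$. This is obtained by applying the ABP inequality to the modified function $v - g(y_0) y_{d+1}$, which satisfies a homogeneous Neumann condition at $y_0$ up to a $C^\gam$ error, combined with the classical measure-theoretic step of Krylov--Safonov adapted near $\Sigma_0$ by using the reflection of $v$ (which is a subsolution of an extended uniformly elliptic equation on $B_r$, modulo lower order terms coming from $g$). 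Iterating the growth lemma yields the claimed decay
\begin{equation*}
\osc_{B_r^+(y_0)} v \leq C r^\gam \bigl(\osc_{\Sigma^1} v + \|g\|_{L^\infty(\Sigma_0)}\bigr),
\end{equation*}
and a standard covering argument converts this into the stated seminorm bound.

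For the $C^{1,\gam}$ estimate, I would prove that at each $y_0 \in \Sigma_0 \cap \overline{\Sigma^{1/2}}$ there exists an affine function $\ell_{y_0}$ with $\partial_n \ell_{y_0} = g(y_0)$ such that $\osc_{B_r^+(y_0)} (v - \ell_{y_0}) \leq C r^{1+\gam}(\osc_{\Sigma^1} v + \|g\|_{C^\gam(\Sigma_0)})$ for all $r \in (0,1/4)$. The approach is an improvement-of-flatness iteration: at each step I freeze the coefficients $A(y_0)$, $B(y_0)$ and the datum $g(y_0)$, and compare $v$ to the solution $w$ of the constant-coefficient oblique problem in $B_1^+$ with matching Dirichlet data on the curved part and $\partial_n w = g(y_0)$ on $B_1'$. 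For the constant-coefficient problem, a linear change of variables aligned with $A(y_0)$ turns $\partial_n$ into the co-normal direction of the transformed operator; reflection across $\Sigma_0$ then gives an extended solution of a constant-coefficient equation on a full ball, to which classical $C^{2,\gam}$ Schauder applies. This yields approximation of $w$ by an affine function up to $O(r^2)$. The difference $v - w$ solves an equation whose right-hand side, produced by freezing, is of order $r^\gam$ times the $C^\gam$ seminorms of $A$ and $B$, with boundary perturbation of order $r^\gam [g]_{C^\gam}$; these are subcritical relative to the $O(r^2)$ approximation of $w$, and a geometric iteration generates the required sequence of affine approximants converging to $\ell_{y_0}$. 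Patching this pointwise boundary estimate with the interior $C^{1,\gam}$ estimate produces the stated seminorm bound.

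The main obstacle is the straightening step used to handle the constant-coefficient problem at $y_0$. Unlike the co-normal setting, the direction $n$ is not adapted to the operator $L$ with frozen coefficients $A(y_0)$, so naive reflection across $\Sigma_0$ does not preserve uniform ellipticity of the extended equation. The key technical point, and the one that requires the most care to track constants, is to carry out a linear change of coordinates depending only on $A(y_0)$ (hence on $\lam, \Lam$ through the ellipticity bounds) which converts $\partial_n$ into the co-normal direction of the transformed operator; the reflected equation then has ellipticity constants comparable to $\lam, \Lam$ and classical Schauder applies uniformly in $y_0$. Keeping this change of coordinates controlled in a universal way is precisely what allows the final constant $C_1$ to depend only on $\lam, \Lam, d, \gam$ rather than on the full profile of $A$, $B$, or $g$.
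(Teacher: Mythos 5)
The paper does not actually prove this proposition; it is quoted from Lieberman--Trudinger with the remark that standard modifications of their arguments give the stated form, so your attempt is a from-scratch reconstruction. Your overall architecture (boundary oscillation decay for the $C^\gam$ bound, improvement of flatness against a frozen-coefficient comparison problem for the $C^{1,\gam}$ bound) is a legitimate modern route to such estimates. However, the step you yourself single out as the key technical point is not available: there is \emph{no} linear change of coordinates that converts the normal-derivative condition into the co-normal condition of the transformed operator unless $n$ happens to be an eigenvector of $A(y_0)$. Indeed, if $\tilde w = w\circ T^{-1}$, then the operator matrix transforms as $M \mapsto TMT^{t}$, the boundary normal as $n \mapsto T^{-t}n$, and the direction of differentiation as $n \mapsto Tn$; the transformed problem is co-normal iff $Tn \parallel (TMT^{t})(T^{-t}n) = TMn$, i.e.\ iff $Mn \parallel n$. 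The co-normal class is invariant under linear changes of variables, and a genuinely oblique problem (which is exactly the situation the paper is concerned with --- see the remark after Theorem \ref{thm:Main}) stays genuinely oblique in every linear coordinate system. Consequently the reflection argument you build on top of this reduction collapses, both in the $C^{1,\gam}$ iteration and in the $C^\gam$ step, where you likewise invoke a reflection of $v$ as a subsolution of an extended uniformly elliptic equation; even reflection produces such an equation only when $A(y_0)$ decouples the normal direction.

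The gap is repairable but requires a different source for the frozen-coefficient regularity: the constant-coefficient, constant-direction oblique problem in a half-space does satisfy $C^{2,\gam}$ Schauder estimates, but these must be obtained from explicit half-space solution operators (e.g.\ observing that $h=\partial_\nu w$ solves the same equation with Dirichlet data $g(y_0)$ and then integrating back along $\nu$), from Agmon--Douglis--Nirenberg theory, or from the barrier and iteration arguments that Lieberman--Trudinger actually use --- not from reflection. With that substitution your perturbation scheme, and the bookkeeping ensuring $C_1$ depends only on $\lam,\Lam,d,\gam$, would go through.
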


The next theorem is essential for our method.  It is not clear that it is essential for the homogenization to occur, but we use it in a critical way (but maybe there is another approach without it).  As it appears in \cite{BeLiPa-78}, $A$ and $B$ should both be $C^{1,\gam}$, however, this is not necessary, and in Appendix \ref{sec:Rates} we give some details.

\begin{prop}[Rate of Homogenization {\cite[Chp. 3, Sec. 5, Thm 5.1]{BeLiPa-78}}]\label{prop:RatesHomogGlobal}
	There exists a universal constant, $C$, such that if $f\in C^{4,\gam}(\Sigma_0)$, $\bar A$ is the unique homogenized coefficients of (\ref{eqPerturb:ABarDef})  and Proposition \ref{prop:PTFM}, $w^\ep$ and $\bar w$ are the unique solutions of respectively 
	\begin{align*}
		\begin{cases}
		\Tr(A(\frac{x}{\ep})D^2w^\ep) + \frac{1}{\ep}B(\frac{x}{\ep})\cdot\grad w^\ep = 0\ &\text{in}\ \Sigma^1\\
		w^\ep = 0\ &\text{on}\ \Sigma_1\\
		w^\ep = f\ &\text{on}\ \Sigma_0
		\end{cases}
	\end{align*}
	and
	\begin{align*}
		\begin{cases}
		\Tr(\Bar A D^2 \bar w) = 0\ &\text{in}\ \Sigma^1\\
		\bar w = 0\ &\text{on}\ \Sigma_1\\
		\bar w = f\ &\text{on}\ \Sigma_0,
		\end{cases}
	\end{align*}
	then
	\begin{align*}
		\norm{w^\ep - \bar w}_{L^\infty(\Sigma^1)}\leq C\ep.
	\end{align*}
\end{prop}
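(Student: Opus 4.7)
The plan is to establish the rate via the classical two-scale asymptotic expansion, adapted to handle the $C^\gam$ regularity of $A$ and $B$ (as opposed to $C^{1,\gam}$ in \cite{BeLiPa-78}). For each $k=1,\ldots,d+1$, I would first solve the first-order cell problem
\begin{equation*}
L\chi_k(y) := A_{ij}(y)\partial_{ij}\chi_k(y) + B_i(y)\partial_i\chi_k(y) = -B_k(y), \qquad \chi_k\ \integer^{d+1}\text{-periodic,}
\end{equation*}
for which solvability in the mean-zero periodic class follows from Assumption \ref{assume:ABCompatible} combined with the invariant measure $m$ of Remark \ref{rem:InvariantMeasureExistsInL2}, and Schauder theory yields $\chi_k\in C^{2,\gam}$. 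Next I would solve the second-order cell problem
\begin{equation*}
L\chi_{kl}(y) = -\bigl[A_{kl}(y) + A_{ik}(y)\partial_i\chi_l(y) + A_{il}(y)\partial_i\chi_k(y) + \tfrac{1}{2}\bigl(B_k(y)\chi_l(y)+B_l(y)\chi_k(y)\bigr) - \bar A_{kl}\bigr],
\end{equation*}
where $\bar A_{kl}$ is defined precisely so that the bracketed right-hand side is orthogonal to $m$; since $\partial\chi_l\in C^{1,\gam}$ and $B\chi_l\in C^\gam$, the right-hand side is $C^\gam$, giving $\chi_{kl}\in C^{2,\gam}$.

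Armed with these correctors I would form the two-scale expansion
\begin{equation*}
z^\ep(x) := \bar w(x) + \ep\chi_k(x/\ep)\partial_k\bar w(x) + \ep^2\chi_{kl}(x/\ep)\partial_{kl}\bar w(x),
\end{equation*}
and set $L^\ep := \Tr(A(\cdot/\ep)D^2) + (1/\ep)B(\cdot/\ep)\cdot\grad$. A direct chain-rule computation at the scale $y=x/\ep$ shows that the $1/\ep$-order terms cancel by virtue of the first cell problem, the $O(1)$-order terms collapse via the second cell problem together with the homogenized equation $\bar A_{kl}\partial_{kl}\bar w=0$, and the remaining residual consists of terms involving derivatives of $\bar w$ through order four multiplied by bounded $C^\gam$ functions of $x/\ep$. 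Since $f\in C^{4,\gam}(\Sigma_0)$ and $\bar A$ is constant, standard regularity for $\bar w$ in the strip yields uniform control on the needed derivatives of $\bar w$, so $\|L^\ep z^\ep\|_{L^\infty(\Sigma^1)}\leq C\ep$.

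The boundary behavior is next: on $\Sigma_0$ one has $w^\ep=\bar w=f$, and the corrector contributions to $z^\ep - w^\ep$ are $O(\ep)$ by boundedness of $\chi_k$ and $\chi_{kl}$, while on $\Sigma_1$ both $w^\ep$ and $\bar w$ vanish and once more $z^\ep - w^\ep = O(\ep)$. Writing $v^\ep := z^\ep - w^\ep$, we obtain a linear equation $L^\ep v^\ep = R^\ep$ with $\|R^\ep\|_{L^\infty(\Sigma^1)}\leq C\ep$ and $|v^\ep|\leq C\ep$ on $\partial\Sigma^1$. Comparison in the strip---using, for example, the rescaled barrier $\rho^\ep$ from the proof of Proposition \ref{prop:BarrierNonTrivNormalDeriv}, or the uniqueness of bounded solutions of the Dirichlet problem in $\Sigma^1$---yields $\|v^\ep\|_{L^\infty(\Sigma^1)}\leq C\ep$. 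Combined with the obvious $\|z^\ep - \bar w\|_{L^\infty(\Sigma^1)}\leq C\ep$ from boundedness of the correctors, this delivers the stated rate.

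The main obstacle to expect is the bookkeeping confirming that $C^\gam$ regularity of $A$ and $B$ really suffices. One must chase the Schauder scale through the first and second cell problems and verify that every coefficient appearing in the residual of $L^\ep z^\ep$ is a bounded $C^\gam$ function whose product with a bounded derivative of $\bar w$ remains bounded. Truncating the expansion at second order and using only $C^{2,\gam}$ correctors produces exactly an $O(\ep)$ residual, provided $\bar w$ has four bounded derivatives---which is precisely where the $f\in C^{4,\gam}$ hypothesis enters. The extra regularity of $A, B$ assumed in \cite{BeLiPa-78} would buy one additional derivative on the correctors, but is not needed for the $L^\infty$ rate.
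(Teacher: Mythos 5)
Your proposal follows essentially the same route as the paper's Appendix, Section \ref{sec:Rates}: first and second periodic correctors whose solvability comes from the invariant measure and the centering condition, Schauder regularity $\chi,\chi_{kl}\in C^{2,\gam}$ from $A,B\in C^\gam$, the two--scale ansatz truncated at order $\ep^2$, an $O(\ep)$ residual controlled by four derivatives of $\bar w$ (hence the $f\in C^{4,\gam}$ hypothesis), and $O(\ep)$ boundary discrepancies. All of that matches the paper's summary of \cite{BeLiPa-78}.

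The one step whose justification does not go through as you wrote it is the final comparison. The rescaled barrier $\rho^\ep$ from Proposition \ref{prop:BarrierNonTrivNormalDeriv} satisfies $L^\ep\rho^\ep=0$, so it (and any constant) can only absorb the boundary data of $v^\ep=z^\ep-w^\ep$, not the interior residual $R^\ep$; and ``uniqueness of bounded solutions'' is qualitative and yields no quantitative bound. The difficulty is that, because of the singular drift $\frac{1}{\ep}B(\frac{x}{\ep})\cdot\grad$, there is no off--the--shelf supersolution $\Psi$ with $L^\ep\Psi\le-1$ and $\norm{\Psi}_{L^\infty}\le C$ uniformly in $\ep$: any fixed nonconstant profile $\psi(x\cdot n)$ produces an uncontrolled, sign--indefinite term of size $1/\ep$, and the ABP constant degenerates as $\norm{B/\ep}\to\infty$. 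The fix (implicit in the reference to \cite{BeLiPa-78}) is to manufacture the supersolution by the very same two--scale expansion: let $\zeta$ solve $\Tr(\bar AD^2\zeta)=-1$ in $\Sigma^1$ with $\zeta=0$ on $\Sigma_0\union\Sigma_1$ (an explicit concave quadratic in $x\cdot n$), and let $\tilde\zeta^\ep$ be its second--order corrected expansion; the identical residual computation gives $L^\ep\tilde\zeta^\ep\le-\tfrac12$ for $\ep$ small, with $\tilde\zeta^\ep$ uniformly bounded and $\ge -C\ep$ on the boundary. Comparing $v^\ep$ with $C_0\ep\,\tilde\zeta^\ep+C_1\ep$ (and its negative) then yields $\norm{v^\ep}_{L^\infty(\Sigma^1)}\le C\ep$ and closes the argument. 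With that substitution your proof is complete and coincides with the paper's.
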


\begin{rem}
	This theorem also holds for the case of divergence equations, due to \cite{AvellanedaLin-1987CompactnessHomogDivEqCPAM}.
\end{rem}

\subsection{The invariant measure}\label{sec:InvariantMeasure}
A crucial tool for homogenization is the existence of an invariant measure for $L$, which is mentioned in Remark \ref{rem:InvariantMeasureExistsInL2}.  The existence and uniqueness follows almost identically to that of \cite[Chp. 3, Thm. 3.4]{BeLiPa-78}, and we include here the main ideas required to modify their proof to suit our assumptions that $A$ and $B$ are $C^\gam$.  The difference between the two arguments is minor; in \cite{BeLiPa-78}, they converted the equation to a divergence form operator, whereas here we simply use the relevant estimates for the non-divergence setting.

We first note that a reworking of the results 9.11-9.14 in \cite{GiTr-98} show that in the setting of periodic functions, when $f$ is periodic and $u$ is a \emph{periodic} strong solution of
\begin{align*}
	Lu=f\ \text{in}\ \real^{d+1},\ \text{where}\ Lu=\Tr(A(y)D^2u)+B(y)\cdot\grad u,
\end{align*}
then
	\begin{align*}
		\norm{u}_{W^{2,2}([0,1]^{d+1})}\leq C(\norm{u}_{L^2([0,1]^{d+1})}+\norm{f}_{L^2([0,1]^{d+1})})
	\end{align*}
and also for $\sig>0$ large enough,
	\begin{align*}
		\norm{u}_{W^{2,2}([0,1]^{d+1})}\leq \norm{Lu-\sig u}_{L^2([0,1]^{d+1})}.
	\end{align*}
Thus, in particular, any periodic $W^{2,2}([0,1]^{d+1})$ solution of
\begin{align*}
	-\sig u + Lu = f
\end{align*}
is unique.  For existence, we can take any \emph{periodic} and H\"older continuous approximation of $f$, say $f^\del$, such that $f^\del\to f$ in $L^2([0,1]^{d+1})$.  Thus, there exists a unique, bounded, continuous, and periodic solution $u^\del$ to
\begin{align*}
	-\sig u^\del + Lu^\del = f^\del,
\end{align*}
and moreover, $u^\del\in C^{2,\gam}$, so the equation holds classically.  Thus, the above $W^{2,2}$ estimates are applicable, and they show that $\{u^\del\}$ is Cauchy in $W^{2,2}([0,1]^{d+1})$.  Hence the limit of $u^\del$ gives existence for the equation.

Up to this point, we have basically outlined the details that show the operator,
\begin{align*}
	L_\sig u = -\sig u + Lu,
\end{align*}
has the property that $L_\sig ^{-1}$ is well defined,
\begin{align*}
	L_\sig^{-1}: L^2([0,1]^{d+1})\to L^2([0,1]^{d+1}),
\end{align*}
with the estimate
\begin{align*}
	\norm{L_\sig^{-1}f}_{W^{2,2}([0,1]^{d+1})}\leq C\norm{f}_{L^2([0,1]^{d+1})},
\end{align*}
and hence is compact.  The final remaining step in the Fredholm Alternative to obtain the existence of the invariant measure, $m$, is to determine the dimension of the space of solutions to 
\begin{align*}
	(I+\sig(L_\sig^{-1}))z=0,\ \text{for}\ z\in L^2([0,1]^{d+1}).
\end{align*}
As noted in \cite[Chp. 3, Thm. 3.4]{BeLiPa-78}, this is equivalent to determining the set of solutions to 
\begin{align*}
	Lz=0\ \text{with}\ z\in W^{2,2}_{per}([0,1]^{d+1}).
\end{align*}
Just as in \cite[Chp. 3, Thm. 3.4]{BeLiPa-78}, one can use a boot strapping argument to raise the exponent in the $L^p$ estimate for $z$ to show that in fact, any such $z$ is in $W^{2,(d+1)}_{per}([0,1]^{d+1})$.  At this point, we see that $z$ extends to a global, periodic, bounded, and $W^{2,(d+1)}_{loc}$ solution of $Lz=0$.  Applying the Krylov-Safonov Theorem in, e.g. $B_1$, to the function, $z_R(x)=z(Rx)$, we see
\begin{align*}
	[z_R]_{C^\gam(B_1)}\leq C\norm{z_R}_{L^\infty(B_2)},
\end{align*}
and hence may conclude that $z$ must be a constant.  Thus, there is a one parameter family of solutions to the adjoint equation that differ by a multiplicative constant, and $m$ is selected as the unique one that is positive and has $\int_{[0,1]^{d+1}}m=1$.  We note that the positivity of $m$ follows as in \cite[Chp. 3, Thm. 3.4]{BeLiPa-78}.

\subsection{The perturbed test function method for the interior homogenization of (\ref{eqIntro:MainEpScale})}\label{sec:Perturbed}

Just for clarity, we include the main arguments of the perturbed test function method for $u^\ep$.  We did not intend to give a complete presentation of the material, but rather just confirm that the results remain intact when using the perturbed test function method to show the homogenization of (\ref{eqIntro:MainEpScale}) instead of the method of classical solutions given in \cite{BeLiPa-78}.  What follows is a summary of the methods of \cite[Chp. 3, sec 4.2, 5.1]{BeLiPa-78}, combined with the perturbed test function method in \cite{Evan-92PerHomog}, and as such we suggest the interested reader consult both \cite{BeLiPa-78} and our comments to get the complete description.  

We begin with the formal ansatz that assumes
\begin{align*}
	u^\ep(x) = \bar u(x,\frac{x}{\ep}) + \ep v(x,\frac{x}{\ep}) + \ep^2 w(x,\frac{x}{\ep}) + o(\ep^2).
\end{align*} 
Plugging this into (\ref{eqIntro:MainEpScale}) and ignoring terms of the size $O(\ep)$, we see that

\begin{align}
	&\Tr(A(\frac{x}{\ep})D^2u^\ep) + \frac{1}{\ep}B(\frac{x}{\ep})\cdot \grad u^\ep
	\nonumber\\
	&=A_{ij}(\frac{x}{\ep})\bar u_{x_ix_j}(x,\frac{x}{\ep}) + \frac{1}{\ep}A_{ij}(\frac{x}{\ep})\left(\bar u_{x_iy_j}(x,\frac{x}{\ep}) + \bar u_{y_ix_j}(x,\frac{x}{\ep})\right) 
	+ \frac{1}{\ep^2}A_{ij}(\frac{x}{\ep})\bar u_{y_iy_j}(x,\frac{x}{\ep})
	\nonumber\\
	&+ \ep A_{ij}(\frac{x}{\ep})v_{x_ix_j}(x,\frac{x}{\ep}) + A_{ij}(\frac{x}{\ep})\left(v_{x_iy_j}(x,\frac{x}{\ep}) + v_{y_ix_j}(x,\frac{x}{\ep})\right) + \frac{1}{\ep}A_{ij}(\frac{x}{\ep})v_{y_iy_j}(x,\frac{x}{\ep})
	\nonumber\\
	&+ \ep^2 A_{ij}(\frac{x}{\ep})w_{x_ix_j}(x,\frac{x}{\ep}) 
	+ \ep A_{ij}(\frac{x}{\ep})\left(w_{x_iy_j}w(x,\frac{x}{\ep}) 
	+w_{y_ix_j}(x,\frac{x}{\ep}) \right)
	+ A_{ij}(\frac{x}{\ep})w_{y_iy_j}(x,\frac{x}{\ep})
	\nonumber\\
	&+ \frac{1}{\ep}B(\frac{x}{\ep})\cdot\grad_x \bar u(x,\frac{x}{\ep}) + \frac{1}{\ep^2}B(\frac{x}{\ep})\cdot\grad_y \bar u(x,\frac{x}{\ep})
	\nonumber\\
	&+ B(\frac{x}{\ep})\cdot\grad_x v(x,\frac{x}{\ep}) + \frac{1}{\ep}B(\frac{x}{\ep})\cdot\grad_y v(x,\frac{x}{\ep})
	\nonumber\\
	&+ \ep B(\frac{x}{\ep})\cdot\grad_x w(x,\frac{x}{\ep}) + B(\frac{x}{\ep})\cdot\grad_y w(x,\frac{x}{\ep})
	 \label{eqPerturbed:MegaClusterEpsilon}
\end{align}

In order for there to be any hope of extracting a limit from this equation, we try to collect the negative powers of $\ep$ and set them to zero.  First, we have
\begin{align*}
	\frac{1}{\ep^2}A_{ij}(\frac{x}{\ep})\bar u_{y_iy_j}(x,\frac{x}{\ep})
	+ \frac{1}{\ep^2}B(\frac{x}{\ep})\cdot\grad_y \bar u(x,\frac{x}{\ep})
	=0
\end{align*}
This implies that we can choose, with an abuse of notation,
\begin{align*}
	\bar u(x,\frac{x}{\ep})=\bar u(x).
\end{align*}
Hence both $\bar u_{x_iy_j}=0$ and $\bar u_{y_ix_j}=0$.

The $1/\ep$ terms now become
\begin{align}\label{eqPerturb:BadV}
	\frac{1}{\ep}A_{ij}(\frac{x}{\ep})v_{y_iy_j}(x,\frac{x}{\ep})
	+ \frac{1}{\ep}B(\frac{x}{\ep})\cdot\grad_x \bar u(x,\frac{x}{\ep})
	+ \frac{1}{\ep}B(\frac{x}{\ep})\cdot\grad_y v(x,\frac{x}{\ep})
	=0
\end{align}
We let $p(x)=\grad_x\bar u(x)$.  We need to find a $v$ so that
\begin{align}\label{eqPerturb:EqForVandP}
	A_{ij}(y)v_{y_iy_j}(x,y)
	+ B(y)\cdot\grad_y v(x,y) = -B(y)\cdot p(x).
\end{align} 
Let $\chi(y)=(\chi^1(y),\dots,\chi^{d+1}(y))^T$ solve
\begin{align}\label{eqPerturb:ChiCorrector}
	A_{ij}(y)\chi^l_{y_jy_j} + B(y)\grad_y\chi^l(y) = -B^l(y),
\end{align}
which is possible because of Assumption \ref{assume:ABCompatible},
\begin{align*}
	\int_{[0,1]^{d+1}}B^l(y)m(y)dy=0.
\end{align*}
We note that $\chi^l$ are independent of $\bar u$!
Suppose that 
\begin{align}\label{eqPerturb:VCorrector}
	v(x,y) = \sum_l \bar u_{x_l}(x)\chi^{l}(y) + \tilde v(x).
\end{align}
We note that in the previous and upcoming equation, $x$, is just a parameter which can be considered fixed, hence the notation $p=\grad \bar u(x)$.  However, the $x$ dependence is relevant when searching for the function, $w(x,y)$.
Then we have
\begin{align*}
	&A_{ij}(y)v_{y_iy_j}(y) + \sum_i B^i(y)v_{y_i}(y)\\
	&= \sum_l p_l(x) A_{ij}(y)\chi^l_{y_iy_j}(y) + \sum_{l,i} p_l(x)B^i(y)\chi^l_{y_i}(y)\\
	&= \sum_l p_l(x)\left(A_{ij}(y) \chi^l_{y_iy_j}(y) + \sum_i B^i(y)\chi^l_{y_i}(y) \right)\\
	&= \sum_l p_l(x)\left(A_{ij}(y) \chi^l_{y_iy_j}(y) + B(y)\cdot\grad_y\chi^l(y) \right)\\
	&= \sum_l p_l(x)\left(-B^l(y) \right)\\
	&= -B(y)\cdot p(x)
\end{align*}

Thus far we have now identified the function $v$, depending on $\bar u$.  In what follows, we need to show that there is a particular choice of effective coefficients, $\bar a_{ij}$ which will yield the existence of $w$.  It is this compatibility condition for $w$, which involves the first corrector, $v$, that gives the effective equation for $\bar u$.  After ignoring all terms with $\ep$ or $\ep^2$ (which will be transparent from the proof of the perturbed test function method in the proof of Proposition \ref{prop:PTFM} below), what is left of 
\begin{align*}
	\Tr(A(\frac{x}{\ep})D^2u^\ep) + \frac{1}{\ep}B(\frac{x}{\ep})\cdot \grad u^\ep=0,
\end{align*}
now reads as
\begin{align*}
	&A_{ij}(\frac{x}{\ep})\bar u_{x_ix_j}(x)
	+A_{ij}(\frac{x}{\ep})\left(v_{x_iy_j}(x,\frac{x}{\ep}) + v_{y_ix_j}(x,\frac{x}{\ep})\right)
	+ A_{ij}(\frac{x}{\ep})w_{y_iy_j}(x,\frac{x}{\ep})\\
	&\ \ \ \ + B(\frac{x}{\ep})\cdot\grad_x v(x,\frac{x}{\ep})
	+  B(\frac{x}{\ep})\cdot\grad_y w(x,\frac{x}{\ep})
	=0.
\end{align*}
Considering $x$ fixed, setting $Q=D^2\bar u(x)$, and inserting $v$, we seek a $w$ that satisfies
\begin{align*}
	& A_{ij}(y)w_{y_iy_j}(x,y) +  B(y)\cdot\grad_y w(x,y)\\
	&= -A_{ij}(y)Q_{ij} 
	-A_{ij}(\frac{x}{\ep})\left(v_{x_iy_j}(x,\frac{x}{\ep}) + v_{y_ix_j}(x,\frac{x}{\ep})\right)
	- B(y)\cdot\grad_x v(x,y),
\end{align*}
or in terms of $\chi$,
\begin{align*}
	& A_{ij}(y)w_{y_iy_j}(x,y) +  B(y)\cdot\grad_y w(x,y)\\
	&= -A_{ij}(y)Q_{ij} 
	-A_{ij}\left(\bar u_{x_lx_i}(x)\chi^l_{y_j}(y) +\bar u_{x_lx_j}(x)\chi^l_{y_i}(y)\right)  \\
	&\ \ \ \ \ - B^k(y)(\bar u_{x_l}\chi^l(y)+\tilde v(x))_{x_k}.
\end{align*}
This further simplifies as
\begin{align*}
	& A_{ij}(y)w_{y_iy_j}(x,y) +  B(y)\cdot\grad_y w(x,y)\\
	&= -A_{ij}(y)Q_{ij} 
	-A_{ij}(y)\bar u_{x_lx_i}(x)\chi^l_{y_j}(y) 
	-A_{ij}(y)\bar u_{x_lx_j}(x)\chi^l_{y_i}(y) \\
	&\ \ \ \ \  - B^k(y)\bar u_{x_lx_k}(x)\chi^l(y) -B^k(y)\tilde v_{x_k}(x).
\end{align*}
Since we only have an equation in the $y$ variable for $w$, in the best case scenario, we would seek a periodic $w$, solving
\begin{align*}
	& A_{ij}(y)w_{y_iy_j}(x,y) +  B(y)\cdot\grad_y w(x,y)\\
	&= -A_{ij}(y)Q_{ij} 
	-A_{ij}Q_{li}\chi^l_{y_j}(y) 
	 -A_{ij}(y)Q_{lj}\chi^l_{y_i}(y)\\
	&\ \ \ \ \ - B^k(y)Q_{lk}\chi^l(y) -B^k(y)\tilde v_{x_k}(x).
\end{align*}
This is, of course, too strict.  Thus, to relax the problem, we seek a unique choice of $\lam(Q)$ to balance the right hand side.  That is, we seek a unique choice of $\lam(Q)$ such that there exists a $w$ solving
\begin{align}
	& A_{ij}(y)w_{y_iy_j}(x,y) +  B(y)\cdot\grad_y w(x,y)\nonumber\\
	&= -A_{ij}(y)Q_{ij} 
	-A_{ij}Q_{li}\chi^l_{y_j}(y) 
	 -A_{ij}(y)Q_{lj}\chi^l_{y_i}(y)\nonumber\\
	&\ \ \ \ \ - B^k(y)Q_{lk}\chi^l(y) -B^k(y)\tilde v_{x_k}(x) + \lam(Q).\label{eqPerturb:EqForWCorrector}
\end{align}
Since we already know that 
\begin{align*}
	\int_{[0,1]^{d+1}}B^k(y)m(y)dy=0,
\end{align*}
such a $w$ can exist only if 
\begin{align*}
	&\int_{[0,1]^{d+1}}\lam(Q)m(y)dy\\
	&-\int_{[0,1]^{d+1}}\left(
	A_{ij}(y)Q_{ij} 
		+A_{ij}(y)Q_{lj}\chi^l_{y_j}(y)  
		+A_{ij}(y)Q_{li}\chi^l_{y_i}(y)
		+ B^k(y)Q_{lk}\chi^l(y)
	\right)m(y)dy
	=0
\end{align*}
We note that this can be re-written in new index variables as
\begin{align*}
	&\int_{[0,1]^{d+1}}\lam(Q)m(y)dy\\
	&-\int_{[0,1]^{d+1}}
	\Big(A_{mn}(y)Q_{mn} 
		+A_{pm}(y)Q_{mn}\chi^n_{y_p}(y)  
		+A_{pn}(y)Q_{mn}\chi^m_{y_p}(y)\\
		&\ \ \ \ \ \ \ \ \ \ + \frac{1}{2}(B^m(y)Q_{mn}\chi^n(y)+B^n(y)Q_{mn}\chi^m(y))\Big)
	m(y)dy
	=0
\end{align*}
We know that $\lam(Q)$ will be linear in $Q$, so we will use the form
\begin{align*}
	\lam(Q)=\bar a_{mn}Q_{mn},
\end{align*}
and hence we need  
\begin{align*}
	&Q_{mn}\int_{[0,1]^{d+1}}\bar a_{mn}m(y)dy\\
	&-Q_{mn}\int_{[0,1]^{d+1}}
	\Big(A_{mn}(y) 
		+A_{pm}(y)\chi^n_{y_p}(y)  
		+A_{pn}(y)\chi^m_{y_p}(y)\\
		&\ \ \ \ \ \ \ \ \ \ \  + \frac{1}{2}(B^m(y)\chi^n(y)+B^n(y)\chi^m(y))\Big)
	m(y)dy
	=0.
\end{align*}
We see that $\bar a_{mn}$ must be uniquely chosen as
\begin{align}
	\bar a_{mn} = &\int_{[0,1]^{d+1}}
	\Big(A_{mn}(y) 
		+A_{pm}(y)\chi^n_{y_p}(y)  
		+A_{pn}(y)\chi^m_{y_p}(y)\nonumber\\
		&\ \ \ \ + \frac{1}{2}(B^m(y)\chi^n(y)+B^n(y)\chi^m(y))\Big)
	m(y)dy.\label{eqPerturb:ABarDef}
\end{align}

All of this work can be summarized in the following proposition.

\begin{prop}\label{prop:CorrectorAppendix}
	Assume that $\phi\in C^{2}$, $p(x)=\grad\phi(x)$, and that $Q\in\mathcal S(d+1)$ is fixed. Define $\chi^l$ and $v$ respectively by using $\phi_{x_l}$ in (\ref{eqPerturb:ChiCorrector}) and (\ref{eqPerturb:VCorrector}).  There exists a unique choice of $\lam(Q)\in\real$ such that (\ref{eqPerturb:EqForWCorrector}) admits a (classical) periodic solution, $w$, as a function of $y$.  In this instance, $\lam(Q)$ is computed explicitly in terms of $\chi$ via (\ref{eqPerturb:ABarDef}), with
\begin{align*}
	\lam(Q)=\bar a_{mn}Q_{mn}.
\end{align*}
\end{prop}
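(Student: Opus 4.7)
The plan is to recognize this proposition as a standard Fredholm-alternative statement for the periodic corrector problem associated with $L$. I would write equation (\ref{eqPerturb:EqForWCorrector}) in the compact form $L w = G_\lambda$, where $L u(y) = A_{ij}(y) u_{y_i y_j}(y) + B(y) \cdot \grad_y u(y)$ and $G_\lambda: [0,1]^{d+1} \to \real$ collects all of the $y$-dependent terms on the right-hand side (with $x$ held fixed as a parameter). With this notation the question becomes whether one can uniquely select $\lambda = \lambda(Q) \in \real$ so that $G_\lambda$ lies in the range of $L$ acting on $\integer^{d+1}$-periodic functions.

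First I would appeal to the periodic linear theory developed in Section \ref{sec:InvariantMeasure}. The operator $L$ acting on periodic $W^{2,2}$ functions has a one-dimensional kernel (the constants, as shown there via a global Krylov–Safonov argument), and by Fredholm alternative applied to the compact resolvent $L_\sigma^{-1}$, its range on the periodic cell is precisely the annihilator of $\ker(L^*)$, which is spanned by the invariant measure $m$. Consequently, $L w = G_\lambda$ admits a periodic $W^{2,2}$ solution if and only if $\int_{[0,1]^{d+1}} G_\lambda(y)\, m(y)\, dy = 0$.

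Next I would compute this compatibility integral and solve it for $\lambda$. The term $B^k(y)\, \tilde v_{x_k}(x)$ appearing in $G_\lambda$ integrates to zero against $m$ thanks to Assumption \ref{assume:ABCompatible}, which is precisely the reason the centering condition is imposed. All remaining terms are linear in $Q$ and independent of $\lambda$, whereas the $\lambda$ contribution reduces to $-\lambda \int m = -\lambda$. The compatibility relation is therefore the scalar linear equation
\begin{equation*}
\lambda \;=\; \int_{[0,1]^{d+1}} m(y) \Bigl[A_{ij}(y) Q_{ij} + A_{ij}(y) Q_{li} \chi^l_{y_j}(y) + A_{ij}(y) Q_{lj} \chi^l_{y_i}(y) + B^k(y) Q_{lk} \chi^l(y)\Bigr] dy,
\end{equation*}
which uniquely determines $\lambda(Q)$, is manifestly linear in $Q$, and, after symmetrizing in the free indices (using symmetry of $Q$), matches exactly the formula $\lambda(Q) = \bar a_{mn} Q_{mn}$ with $\bar a_{mn}$ given in (\ref{eqPerturb:ABarDef}).

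With $\lambda(Q)$ thus fixed, the Fredholm alternative produces a periodic $W^{2,2}$ solution $w$, unique up to an additive constant. To upgrade $w$ to a classical solution, I would bootstrap regularity: the correctors $\chi^l$ are themselves $C^{2,\gam}$ periodic by the same Fredholm argument applied to (\ref{eqPerturb:ChiCorrector}) (whose solvability is assured by Assumption \ref{assume:ABCompatible}) combined with periodic Schauder estimates, and hence $G_\lambda \in C^\gam$. Schauder theory for the non-divergence equation $L w = G_\lambda$ then gives $w \in C^{2,\gam}$, so the equation holds pointwise. The principal obstacle is making sure the Fredholm framework genuinely applies to $L$ with merely $C^\gam$ coefficients in non-divergence form and with $m$ initially only in $L^2$; this is exactly what is carried out in Section \ref{sec:InvariantMeasure} via the periodic $W^{2,2}$ estimate for $L_\sigma$ and a subsequent bootstrap of $m$. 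Once that machinery is in place, the proof reduces to the one-dimensional linear-solvability argument above.
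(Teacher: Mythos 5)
Your proposal is correct and follows essentially the same route as the paper: the paper also obtains $\lam(Q)$ from the Fredholm-alternative solvability condition $\int_{[0,1]^{d+1}} G_\lam(y)\,m(y)\,dy=0$ for the periodic non-divergence operator $L$ (using the invariant measure $m$ and the kernel characterizations worked out in Section \ref{sec:InvariantMeasure} following \cite[Chp.\ 3, Thm.\ 3.4]{BeLiPa-78}), kills the $B^k\tilde v_{x_k}$ term via the centering Assumption \ref{assume:ABCompatible}, and symmetrizes in the indices of $Q$ to arrive at (\ref{eqPerturb:ABarDef}). Your explicit regularity bootstrap to a classical $w$ is a detail the paper leaves implicit, and you correctly include the weight $m$ in the compatibility integral, which the paper's first display omits by an apparent typo.
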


\begin{rem}
	The previous statement is just a summary of the steps that culminate on \cite[p. 416]{BeLiPa-78}.  
\end{rem}

\begin{rem}
	We note that (\ref{eqPerturb:EqForWCorrector}) indicates that $\tilde v$ can be any reasonable function of $x$, and $w$ can still be determined.  Thus we are free to set, a posteriori, $\tilde v(x)\equiv 0$, hence making $w$ a function of only $y$.
\end{rem}

Now we move onto implementing the perturbed test function method for this equation.  It is just a rewriting of the details in \cite{Evan-92PerHomog} in our context.  We claim that

\begin{prop}\label{prop:PTFM}
	Assume that $\bar u$ is any local uniform limit of $u^\ep$ in $\Sigma^1$.  Then $\bar u$ must be a solution of 
	\begin{align}\label{eqPerturb:Effective}
		\bar a_{ij}\bar u_{x_ix_j}=0\ \text{in}\ \Sigma^1,
	\end{align}
	with $\bar a_{ij}$ defined in (\ref{eqPerturb:ABarDef}).
\end{prop}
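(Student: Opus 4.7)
The plan is to establish this via Evans' perturbed test function method, leveraging the correctors $\chi^l$ and $w$ together with the effective coefficients $\bar a_{ij}$ produced by Proposition~\ref{prop:CorrectorAppendix}. Since $\bar u$ is a locally uniform limit of the continuous functions $u^\ep$, it suffices to verify that $\bar u$ is a viscosity solution of $\bar a_{ij} \bar u_{x_i x_j} = 0$; I will only describe the subsolution side, as the supersolution argument is symmetric.

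First, fix $x_0 \in \Sigma^1$ and a smooth test function $\phi$ such that $\bar u - \phi$ has a strict local maximum at $x_0$ with $\bar u(x_0) = \phi(x_0)$. Assume for contradiction $\bar a_{ij} \phi_{x_i x_j}(x_0) > 0$. Freeze $Q := D^2 \phi(x_0)$, let $\chi^l$ solve (\ref{eqPerturb:ChiCorrector}), set $v(x,y) := \phi_{x_l}(x)\chi^l(y)$, and let $w(y)$ be the $\integer^{d+1}$-periodic solution of (\ref{eqPerturb:EqForWCorrector}) with this frozen $Q$ and $\tilde v \equiv 0$; Proposition~\ref{prop:CorrectorAppendix} shows $w$ exists precisely because $\lam(Q) = \bar a_{ij}Q_{ij}$ is the unique constant making the compatibility condition with the invariant measure $m$ hold. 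Regularity for $\chi^l$ and $w$ follows from Schauder theory applied to $L$ (using $A, B \in C^\gam$), so both correctors are $C^{2,\gam}$. Define the perturbed test function
\begin{equation*}
    \phi^\ep(x) := \phi(x) + \ep\, v(x, x/\ep) + \ep^2 w(x/\ep).
\end{equation*}

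Next, apply $L^\ep := \Tr(A(\cdot/\ep) D^2 \cdot) + \frac{1}{\ep} B(\cdot/\ep)\cdot \grad$ to $\phi^\ep$ using the expansion (\ref{eqPerturbed:MegaClusterEpsilon}). The $\ep^{-2}$ terms vanish because $\phi$ is independent of $y$; the $\ep^{-1}$ terms cancel by the defining equation for $\chi^l$, which is exactly how $v$ was built to solve (\ref{eqPerturb:EqForVandP}). Collecting the $O(1)$ contributions and invoking the solvability equation (\ref{eqPerturb:EqForWCorrector}) for $w$ at the frozen $Q$, we obtain
\begin{equation*}
    L^\ep \phi^\ep(x) = \bar a_{ij}\, \phi_{x_i x_j}(x_0) + R^\ep(x),
\end{equation*}
where the residual $R^\ep$ collects: (i) the mismatch $A_{ij}(x/\ep)(\phi_{x_ix_j}(x) - \phi_{x_ix_j}(x_0))$ together with analogous differences in the mixed terms coming from replacing $D^2\phi(x_0)$ with $D^2\phi(x)$, and (ii) explicit $O(\ep)$ and $O(\ep^2)$ terms from the higher-order cross derivatives of $\phi^\ep$. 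Since $D^2 \phi$ is continuous at $x_0$ and $\chi, w, \grad\chi$ are uniformly bounded, $R^\ep(x) \to 0$ uniformly as $\ep \to 0$ and $x \to x_0$.

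Finally, since $\phi^\ep \to \phi$ and $u^\ep \to \bar u$ locally uniformly, $u^\ep - \phi^\ep$ attains a local max at some $x_\ep \to x_0$. At $x_\ep$, $D(u^\ep - \phi^\ep) = 0$ and $D^2(u^\ep - \phi^\ep) \leq 0$, so comparing against $L^\ep u^\ep(x_\ep) = 0$ yields $L^\ep \phi^\ep(x_\ep) \geq 0$; passing to the limit gives $\bar a_{ij}\phi_{x_ix_j}(x_0) \geq 0$, contradicting the assumed strict inequality. The main obstacle is technical bookkeeping in the two paragraphs above: ensuring that freezing $Q = D^2\phi(x_0)$ (necessary because the compatibility condition for $w$ is pointwise in $Q$) yields a residual $R^\ep$ that genuinely decays to zero, and that the classical maximum-principle argument at $x_\ep$ is justified despite $w$ only being periodic in $y$ with no $x$-dependence. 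Both are handled via the $C^{2,\gam}$ regularity of the correctors and the continuity of $D^2\phi$, making the calculation rigorous.
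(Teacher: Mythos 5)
Your argument is correct and follows essentially the same route as the paper: Evans' perturbed test function method with the correctors $\chi^l$, $v$, $w$ of Proposition \ref{prop:CorrectorAppendix}, the Hessian frozen at $Q=D^2\phi(x_0)$, and the residual controlled by continuity of $D^2\phi$ together with boundedness of $A$, $B$, $\chi^l$, $\grad\chi^l$; the only (harmless) variation is that you close by evaluating the classical inequality at a touching point $x_\ep\to x_0$, whereas the paper makes $\psi^\ep$ a strict supersolution on a small ball $B_\rho(x_0)$, invokes the comparison principle there, and only then passes to the limit to contradict the strict interior maximum. One slip to fix: for the subsolution property the contradiction hypothesis should be $\bar a_{ij}\phi_{x_ix_j}(x_0)<0$ (the paper takes $\leq-\del<0$), not $>0$; your concluding step, which derives $\bar a_{ij}\phi_{x_ix_j}(x_0)\geq 0$, contradicts the former, not the latter.
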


\begin{proof}
We will prove that $\bar u$ is a (viscosity) subsolution of (\ref{eqPerturb:Effective}).  Similarly one establishes that $\bar u$ is a (viscosity) supersolution.  We will proceed by contradiction and assume that $\bar u$ is not a viscosity subsolution.  That is, we assume that $\phi$ is smooth and bounded and that $\bar u-\phi$ attains a strict local max in $B_r(x_0)$ at $x_0$, but for some $\del>0$
\begin{align*}
	\bar a_{ij}\phi_{x_ix_j}(x_0)\leq-\del<0.
\end{align*}  
Let us take
\begin{align*}
	p(x)=\grad\phi(x)\ \text{and}\ Q=D^2\phi(x_0),
\end{align*}
and let $\chi^l$, $v$, and $w$ be as in Proposition \ref{prop:CorrectorAppendix}.  We claim that 
\begin{align*}
	\psi^\ep(x) = \phi(x) + \ep v(x,\frac{x}{\ep}) + \ep^2w^\ep(\frac{x}{\ep})
\end{align*}
is in fact a viscosity supersolution of
\begin{align*}
	\Tr(A(\frac{x}{\ep})D^2\psi^\ep) + \frac{1}{\ep}B(\frac{x}{\ep})\cdot\grad\psi^\ep=0\ \text{in}\ B_{\rho}(x_0),
\end{align*}
for some $\rho>0$ appropriately small, depending on $\norm{\phi}_{C^{2,\gam}(B_r(x_0))}$, $\norm{v}_{C^{1,1}}$, and $\norm{w}_{C^{1,1}}$ (in fact, $\psi^\ep$ is a classical solution, but we only care about a class of solutions that satisfies a comparison theorem).  We note that the contradiction assumption for $\phi$ can be restated as
\begin{align*}
	\lam(D^2\phi(x_0))\leq -\del.
\end{align*}
We also note by the boundedness of $A$, $B$, $\chi^l$, and $\grad\chi^l$, that for all $x\in B_\rho(x_0)$, we can effectively localize the equation at $x_0$ because
\begin{align*}
	|&A_{ij}(\frac{x}{\ep})\phi_{x_ix_j}(x) 
		+A_{ij}(\frac{x}{\ep}) \phi_{x_lx_i}(x)\chi^l_{y_j}(\frac{x}{\ep}) 
		+A_{ij}(\frac{x}{\ep}) \phi_{x_lx_j}(x)\chi^l_{y_i}(\frac{x}{\ep})
		+B^k(\frac{x}{\ep})\phi_{x_lx_k}(x)\chi^l(\frac{x}{\ep})\\
	&	-\left(A_{ij}(\frac{x}{\ep})\phi_{x_ix_j}(x_0) 
		+A_{ij}(\frac{x}{\ep}) \phi_{x_lx_i}(x_0)\chi^l_{y_j}(\frac{x}{\ep}) 
		+A_{ij}(\frac{x}{\ep}) \phi_{x_lx_j}(x_0)\chi^l_{y_i}(\frac{x}{\ep})
		+B^k(\frac{x}{\ep})\phi_{x_lx_k}(x_0)\chi^l(\frac{x}{\ep})
		\right)|\\
		&\leq \frac{\del}{4}.
\end{align*}
Furthermore, we can possibly restrict $\rho$ to be smaller, depending upon the $C^{1,1}$ norms of $v$ and $w$ so that 
\begin{align*}
	&|
	\ep A_{ij}(\frac{x}{\ep})v_{x_ix_j}(\frac{x}{\ep})
	+ \ep^2 A_{ij}(\frac{x}{\ep})w_{x_ix_j}(\frac{x}{\ep}) 
	+\ep A_{ij}(\frac{x}{\ep})\left(w_{x_iy_j}w(\frac{x}{\ep}) 
		+w_{y_ix_j}(\frac{x}{\ep}) \right) \\
	&\ \ \ \ \ \ \ \ \ \ + \ep B(\frac{x}{\ep})\cdot\grad_x w(\frac{x}{\ep})
	|
	\leq \frac{\del}{4}
\end{align*}
Hence, plugging $\psi^\ep$ into (\ref{eqPerturbed:MegaClusterEpsilon}), using the particular choices of $v$ and $w$, and inspecting, we see that in $B_\rho(x_0)$
\begin{align*}
	&\Tr(A(\frac{x}{\ep})D^2\psi^\ep) + \frac{1}{\ep}B(\frac{x}{\ep})\cdot\grad\psi^\ep\\
	&\leq A_{ij}(y)w_{y_iy_j}(x,y) +  B(y)\cdot\grad_y w(x,y)
	+A_{ij}(y)Q_{ij} 
	+A_{ij}Q_{li}\chi^l_{y_j}(y) 
	 +A_{ij}(y)Q_{lj}\chi^l_{y_i}(y)\\
	 &\ \ \ \ + B^k(y)Q_{lk}\chi^l(y) 
	 +B^k(y)\tilde v_{x_k}(x)
	 +\frac{\del}{4}+ \frac{\del}{4}\\
	 &\leq \lam(Q) +\frac{\del}{2}
	 <0.
\end{align*}
We importantly note that by the construction of $v$, we had exact equality in the equation (\ref{eqPerturb:EqForVandP}), thus canceling these $1/\ep$ terms above, which are the same as in (\ref{eqPerturb:BadV}).
Thus we conclude by the comparison of sub and super solutions that
\begin{align*}
	u^\ep(x_0)-\phi^\ep(x_0)\leq \max_{\overline{B_\rho(x_0)}}u^\ep - \psi^\ep \leq \max_{\partial B_\rho(x_0)} u^\ep-\psi^\ep.
\end{align*}
Now we note that $u^\ep\to \bar u$ and $\psi^\ep\to\phi$ uniformly in $\overline{B_\rho}(x_0)$.  Thus
\begin{align*}
	\bar u(x_0)-\phi(x_0)\leq  \max_{\partial B_\rho(x_0)} \bar u-\phi,
\end{align*}
which is a contradiction to the strict max at $x_0$.  Thus, we see that in fact $\bar u$ is a subsolution of (\ref{eqPerturb:Effective}).  This concludes the proof of proposition \ref{prop:PTFM}.
\end{proof}

\subsection{The rate of convergence for the regular homogenization}\label{sec:Rates}

Here we mention how some minor modifications to the arguments in \cite[Chp. 3, Sec 5]{BeLiPa-78} yield the rate of convergence under our assumptions on $A$ and $B$.  

First, we note that in the proof of Proposition \ref{prop:PTFM}, the expansion 
\begin{align*}
	\psi^\ep(x) = \phi(x) + \ep v(x,\frac{x}{\ep}) + \ep^2w^\ep(\frac{x}{\ep})
\end{align*}
is only used locally, and we did not utilize $x$ dependence for $w$ or the values of $\bar u$ (but $\bar u$ implicitly plays a role through the $\phi$).  However, as pointed out on \cite[p.418-419]{BeLiPa-78}, you can get much more information out of this expansion by using a better choice for $w$.  

In all that follows, \cite[p.418-419]{BeLiPa-78} have more coefficients in $L$ than we do (and they call their operator $A$).  In the context of their notation, we have for $A$ and $B$ given in this work, and $a$, $b$, $c$, $a_0$ in \cite{BeLiPa-78},
\begin{align*}
	a_{ij}(y)=A_{ij}(y),\ b_i(y)=B_i(y),\ c_i(y)=a_0\equiv 0.
\end{align*}

Assume that $w^\ep$ and $\bar w$ are as in the statement of Proposition \ref{prop:RatesHomogGlobal}; here $w^\ep$ and $\bar w$ play the role of respectively $u^\ep$ and $u$ in \cite[p.418-419]{BeLiPa-78}.  We note that $\bar A$ is a constant coefficient and uniformly elliptic matrix, and thus $\bar w$ is locally smooth and globally as smooth as is $f$ (the Dirichlet data), in particular $\norm{\bar w}_{C^{4,\gam}(\overline{\Sigma^1})}\leq C\norm{f}_{C^{4,\gam}(\Sigma_0)}$.
The good expansion is
\begin{align*}
	\tilde w^\ep(x) = \bar w(x) + \ep v(x,\frac{x}{\ep}) + \ep^2 w_2(x,\frac{x}{\ep}),
\end{align*}
where $v$ is defined in (\ref{eqPerturb:EqForVandP}) using $p(x)=\grad \bar w(x)$, and $w_2$ is defined as
\begin{align*}
	w_2(x,y) = \bar w_{x_ix_j}(x)\chi^{ij}(y),
\end{align*}
and $\chi^{ij}$ (note these are different than $\chi^i$ with one upper index) are chosen to solve
\begin{align*}
	&\Tr(A(y)D^2\chi^{ij}(y)) + B(y)\cdot\grad\chi^{ij}(y) = \\
	&\ \ \ \ \ \ \ \ \ \ \bar a_{ij} -\{A_{ij}(y) 
		+A_{kj}(y)\chi^i_{y_k}(y)  
		+A_{ki}(y)\chi^j_{y_k}(y)
		+ \frac{1}{2}(B^i(y)\chi^j(y)+B^j(y)\chi^i(y))\}.
\end{align*}
Now, we note that $A$ and $B$ periodic and in $C^\gam$ implies that $\chi^l$, and as a result also $\chi^{ij}$, are all periodic and $C^{2,\gam}(\Sigma^1)$.

The final step is to compute the equation for 
\begin{align*}
	z^\ep = w^\ep - \tilde w^\ep,
\end{align*}
which can be followed directly in \cite[p. 418, eq. (5.23)]{BeLiPa-78}, for
\begin{align*}
	L(w^\ep - \tilde w^\ep) = \ep g^\ep.
\end{align*}
We use the same definition of $g^\ep$ as in \cite[p.418]{BeLiPa-78} (note here it is significantly simpler due to the absence of $c_i$ and $a_0$).  The important thing to observe about $g^\ep$ is that it involves: the coefficients, $A$ and $B$; the function $\chi^l$; the function and up to one derivative of $\chi^{ij}$; third and fourth derivatives of $\bar w$.  Thus by the regularity that is noted above, $\norm{g^\ep}_{L^\infty}\leq C$, independently of $\ep$.  Furthermore, $(w^\ep-\tilde w^\ep)|_{\Sigma_0\union \Sigma_1}\leq C\ep$ (which can be checked by a simple calculation).  Thus we conclude
\begin{align*}
	\norm{z^\ep}_{L^\infty(\Sigma^1)}\leq \ep C,
\end{align*} 
and this implies
\begin{align*}
	\norm{w^\ep-\bar w}_{L^\infty(\Sigma^1)}\leq  \ep C.
\end{align*}

\subsection{The Divergence Structure of Equation (\ref{eqIntro:MainEpScale})}\label{sec:DivergenceFormEq}

As mentioned in the introduction, one of the anonymous referees pointed out that the standard assumption surrounding the interior homogenization of (\ref{eqIntro:MainEpScale}), that is Assumption \ref{assume:ABCompatible}, means that equation (\ref{eqIntro:MainEpScale}) can be re-written in a divergence form, with a possibly non-symmetric coefficient matrix.  This trick of rewriting non-divergence equations in divergence form in homogenization also appears in \cite[Sec. 1]{AvellanedaLin-1989CompactnessHomogNonDivEqCPAM} and \cite[Chp. 3, Sec. 5.2]{BeLiPa-78}.  Since our approach was inherently non-variational, based on comparison principle arguments, we did not pursue an alternate proof to utilize this hidden divergence structure of (\ref{eqIntro:MainEpScale}).  However, in moving to other equations or possibly looking into the case of random environments, it is conceivable that this divergence structure could lend some insights or advantages, such as in regards to rates of convergence like \cite{ArmstrongSmart-2016QuantHomogConvexEnergyFunASENS}. 

Here we briefly mention the details that make this conversion possible.  It is not new, and we just summarize the details presented on the same topic in \cite[Sec. 1]{AvellanedaLin-1989CompactnessHomogNonDivEqCPAM}, where they do the same transformation for an equation with no drift.  We start with the operator, $L$, given by
\[
L(u)=A_{ij}(y)u_{y_iy_j}+B(y)\cdot \grad u=A_{ij}(y)u_{y_iy_j}+B_j(y)u_{y_j},
\]
and, as above, its formal adjoint,
\[
L^*(v)= \left((A_{ij}(y)v)_{y_i}-vB_j(y) \right)_{y_j}.
\]
As shown in Appendix \ref{sec:InvariantMeasure}, we know that there is a unique invariant measure, $m$, which is a \emph{positive} solution to the adjoint equation.  The goal \emph{would be} to find some choice of matrix, $C_{ij}$, so that we can represent
\[
L(u) = \Div(C_{ij}\grad u),
\]
but this will not work except in special cases.  However, using the invariant measure, $m$, we will be able to find $C_{ij}$ so that
\[
mL(u)=\Div(C_{ij}\grad u).
\]
Using the fact that $m$ solves $L^*(m)=0$, we have that
\[
\sum_i\sum_j\left((A_{ij}(y)m)_{y_i}-mB_j(y) \right)_{y_j}=0,
\]
and so we can define the vector field $F=(F_j)_{j=1}^{d+1}$, via the relationship
\[
-\sum_i(A_{ij}(y)m)_{y_i}+mB_j(y)= F_j.
\]
Expanding the relationship between $mL(u)$ and $\Div(C_{ij}\grad u)$, it is clear the we seek $C_{ij}$ of the form
\[
C_{ij}(y)=m(y)A_{ij}(y) + T_{ij},
\] 
where we have yet to determine $T$.  We will define $T_{ij}$ via potentials, $\phi_j$. We take $\phi_j$ defined as
\[
\Delta \phi_j=F_j,\ \ \phi_j\ \text{periodic},\ \ \int_{[0,1]^{d+1}}\phi_jdy=0.
\]
This is possible because as derivatives of periodic functions, $(m(y)A_{ij})_{y_i}$, have mean zero, and also $m(y)B_j(y)$ has mean zero by Assumption \ref{assume:ABCompatible}. Now, we define
\[
T_{ij}(y)=(\phi_j)_{y_i}-(\phi_i)_{y_j}.
\]

The most useful properties of $T_{ij}$ are that
\[
T_{ij}+T_{ji}=0,\ \ \sum_i(\phi_i)_{y_i}=0,\ \ \text{and}\ \ \sum_i (T_{ij})_{y_i}=\Delta \phi_j.
\]
This can be used to show that
\begin{align*}
\sum_i\sum_j(C_{ij}u_{y_j})_{y_i}&=\sum_i\sum_j mA_{ij}u_{y_iy_j}+(mA_{ij})_{y_i}u_{y_j}\\
&\ \ \ \ \ \ \ \ 
+\sum_i\sum_j T_{ij}u_{y_iy_j} + (T_{ij})_{y_i}u_{y_j}.
\end{align*}
We see that the third term on the right vanishes by construction of the anti-symmetry of $T_{ij}$, and the second and fourth terms are constructed so that
\[
\sum_i(mA_{ij})_{y_i}+(T_{ij})_{y_i}=\sum_i(mA_{ij})_{y_i}+\Delta\phi_j
=\sum_i(mA_{ij})_{y_i}-(A_{ij}(y)m)_{y_i}+mB_j(y)=mB_j
\]
Thus, this construction has showed
\[
\sum_i\sum_j(C_{ij}u_{y_j})_{y_i}=\sum_i\sum_j mA_{ij}u_{y_iy_j}+mB_j(y)u_{y_j},
\]
which is exactly the divergence structure that was desired.

%%%%%%%%%%%%%%%%%%%%%%%%%%%%%%%%%%%%%%%%%%%%%%
%%%%%%%%%%%%%%%%%%%%%%%%%%%%%%%%%%%%%%%%%%%%%%

\bibliography{../refs}
\bibliographystyle{plain}
%%%%%%%%%%%%%%%%%%%%%%%%%%%%%%%%%%%%%%%%%%%%%%
\end{document}